\newtheorem{thm}{Theorem}
\newtheorem{corollary}{Corollary}
\newtheorem{lemma}{Lemma}
\newtheorem{remark}{Remark}
\newcommand{\reals}{\mathbb{R}}
\title{Precise Matching of PL Curves in $\reals^N$ in the Square Root Velocity Framework}
\author{Sayani Lahiri}
\author{Daniel Robinson}
\author{Eric Klassen}
\begin{document}
\maketitle

\section{Introduction}

In \cite{ElasticPAMI} and other related papers, Srivastava et al introduced a new method for analyzing the shapes of  absolutely continuous functions $[0,1]\to \reals^N$. By ``shape" we mean that under this analysis, two such functions are considered equivalent if they only differ by a reparametrization, i.e., by composition with a diffeomorphism $[0,1]\to[0,1]$. The method is based on producing a bijection between the set of absolutely continuous functions (starting at the origin) and $L^2(I,\reals^N)$, and then transferring the $L^2$ metric back to the set of absolutely continuous functions. The $L^2$ function corresponding to a given absolutely continuous function is called its {\it square root velocity function} (SRVF), and the general method is referred to by the same name. The result is a complete metric on the space of absolutely continuous functions starting at the origin. Furthermore, with respect to this metric, the group of diffeomorphisms acts by isometries. This makes it possible to mod out $L^2(I,\reals^N)$ by an appropriate group of reparametrizations, resulting in a quotient space that is also a complete metric space. 

The SRVF metric and the corresponding quotient construction have proved quite useful for analyzing shapes of functions and curves for several reasons: 
\begin{itemize}
\item The metric has a compelling geometric interpretation as an elastic metric (see \cite{ElasticPAMI}), under which optimal deformations minimize a combination of bending and stretching. 
\item It provides a very effective solution to the classical problem of aligning two functions $\reals\to\reals$ by warping their domains. (See Tucker et al \cite{TuckerWuSrivastava}.) 
\item It can easily be adapted to a method of comparing closed curves, which comprise a complete subspace of the metric space of all curves. These closed curves are especially important because they occur as outlines of images. (See  \cite{ElasticPAMI}.)
\item With some modifications, it can effectively be adapted to the analysis of curves up to affine transformation. (See Bryner et al \cite{Bryner2DAffine}.)
\end{itemize}

A fundamental problem that arises in the implementation of this method is the ``optimal matching" problem: Given two functions $I\to\reals^N$, determine reparametrizations of these functions that achieve the infimum of the distance between the two corresponding orbits under the reparametrization group. Finding such an optimal matching is important not only because it results in a precise computation of the distance between two orbits, but also because it allows one to find shortest geodesics in the quotient space. In theory, we do not know whether such a pair of optimal reparametrizations always exists! In most previous implementations, a solution to this optimal matching problem has been approximated using a dynamic programming algorithm (once again, see \cite{ElasticPAMI}) . 

The current paper has two primary goals: (1) to establish the theoretical underpinnings of the SRVF method, especially the delicate quotient construction referred to above and (2) to exhibit an algorithm that provides a precise solution to the optimal matching problem for continuous piecewise linear functions. The set of piecewise linear functions is very useful because it is the simplest way of interpolating functions for which we have only a finite set of data points, and because it is dense in the space of absolutely continuous functions with respect to the SRVF metric.

There is considerable literature on similar methods for analyzing curves. For example in Younes et al \cite{YounesMichor} a representation of planar curves is used that is similar to SRVF, but involves the complex square root of the velocity (as opposed to the SRVF method, which only takes the square root of the magnitude of the velocity). This method results in a beautiful way of handling closed curves, but does not generalize easily to curves in $\reals^N$. Also, in \cite{YounesMichor}, only smooth curves are considered, which means that the resulting quotient space is not a complete metric space. Sundaramoorthi et al consider a similar metric on the space of smooth planar curves in \cite{SundaramoorthiMennucci}.

In Bauer et al \cite{BauerRTransform}, a whole family of metrics on planar curves is considered, which includes both the SRVF metric and the metric in \cite{YounesMichor} as special cases. However, this paper also does not generalize to curves in $\reals^N$. 

In Daniel Robinson's unpublished doctoral dissertation \cite{RobinsonDissertation}, a precise matching algorithm is introduced for PL functions $I\to\reals$, but it does not easily generalize to PL functions $I\to\reals^N$. Some of the theoretical material from Section 2 is also adapted from \cite{RobinsonDissertation}.

The main advances in the current paper are as follows: 
\begin{itemize}

\item A rigorous development of the SRVF metric.
\item A careful development of the quotient of $L^2(I,\reals^N)$ by the group of reparametrizations; this includes a characterization of the closed orbits involved in the construction.
\item A description and inplementation of an algorithm that gives a precise solution to the matching problem for PL curves, a class of curves that is dense in the space of all absolutely continuous curves.
\end{itemize}

One issue that this paper does {\it not} address, is the action of the group of rotations, $O(N,\reals)$, on the space of absolutely continuous curves. This part of the theory is easier because it involves a linear action by a compact finite dimensional Lie group, and there are straightforward analytic methods for handling it (see Srivastava et al \cite{ElasticPAMI} for details on how to do this).

The contents of this paper are as follows: In Section 2, we define the square root velocity function (SRVF) of an absolutely continuous function $I\to\reals^N$, and we define the group $\Gamma$ of reparametrizations; we also define a semigroup $\tilde\Gamma$ that contains $\Gamma$. In Sections 3 and 4 we prove that the closure of each orbit under $\Gamma$ can be expressed as an orbit under $\tilde\Gamma$. This is important since, if we wish our quotient space to inherit a metric, the orbits must be closed sets. (In Section 3, this theorem is proved for functions $I\to\reals$, while in Section 4 it is generalized to functions $I\to\reals^N$.) In Section 5, we begin to focus on piecewise linear functions, which comprise a dense subset of the set of all absolutely continuous functions with respect to the SRVF metric. In particular, we prove that if we are given two orbits $[q_1]$ and $[q_2]$ under the action of $\tilde\Gamma$, and if at least one of these orbits contains  the SRVF of a piecewise linear function,  then there exist  orbit representatives that realize the minimum distance between these orbits. (Such a pair of orbit representatives is called an {\it optimal matching} of the two orbits.) We also prove that if both of these orbits contain the SRVFs of PL functions, then this optimal pair of orbit representatives can be chosen to be the SRVFs of PL functions. In Section 6, we begin our discussion of how to produce an optimal matching between PL functions, setting up some basic terminology. In Section 7, we prove a theorem establishing certain properties that an optimal matching between PL functions must have. In Section 8, we give a precise algorithm for producing an optimal matching between two PL functions, based on the theorem proved in Section 7. Section 9 gives a few examples of optimal matchings produced by the algorithm described in Section 8.

We thank our colleague Dan Oberlin for several helpful conversations.

\section{Basic Quotient Construction for Curves in $\reals^N$}
In this paper, we consider absolutely continuous functions $I\to \reals^N$, where $I=[0,1]$. A function $f:[a,b]\to\reals$ is {\it absolutely continuous} if and only if it has a derivative $f'$ almost everywhere, $f'$ is Lebesgue integrable, and for all $t\in I$, $f(t)=f(0)+\int_0^t\,f'(u)\,du$.  (This is not the usual definition of absolute continuity, but it is well known to be equivalent to the usual definition; see, for example, Theorems 11, p. 125 and Theorem 14, p. 126  of \cite{RoydenFitzpatrick}.) Let $AC_0(I,\reals^N)$ denote the set of absolutely continuous functions $I\to \reals^N$ with the property that $f(0)=0$. We want to compare these functions up to reparameterization. In other words, given $f$ and $g$ in $AC_0(I,\reals^N)$, we want to consider them as equivalent if there exists a ``nice" homeomorphism $\gamma:I\to I$ such that $f\circ\gamma=g$. If they are not equivalent, we would like a quantitative measure of how far from being equivalent they are. Let $\Gamma$ denote the group of functions $\gamma:I\to I$ which have the following three properties: (1) $\gamma$ is absolutely continuous, (2) $\gamma(0)=0$ and $\gamma(1)=1$,  and (3) $\gamma'(t)>0$ almost everywhere. $\Gamma$ is a group under composition. Clearly, $\Gamma$ acts on $AC_0(I,\reals^N)$ from the right by composition. We would like to make the quotient set $AC_0(I,\reals^N)/\Gamma$ into a metric space in a reasonable way. There are two important issues to overcome here. The first is that to get a reasonable metric on a quotient space, it helps if the group acts by isometries. The second is that the orbits should be closed sets. We tackle these one at a time.

Before we turn to these two issues, it will be helpful to define a semigroup containing $\Gamma$. Let $\tilde\Gamma$ be the set of functions $\gamma:I\to I$ satisfying (1) $\gamma$ is absolutely continuous, (2) $\gamma(0)=0$ and $\gamma(1)=1$, and (3) $\gamma'(t)\geq 0$ almost everywhere. Note that $\tilde\Gamma$ is a semigroup, and also acts on $AC_0(I,\reals^N)$ from the right by composition.

We now describe a way to understand the action of $\tilde\Gamma$ (and, therefore, $\Gamma$) as an action by isometries. To do this, begin by defining a function $V:\reals^N\to \reals^N$ by 
\begin{equation*}
V(x)=
\begin{cases}
\frac{x}{\sqrt{|x|}}& \text{for $x\neq0$}
\\
0& \text{for $x=0$}
\end{cases}
\end{equation*}

Denote by $L^2(I,\reals^N)$ the space of square integrable functions $I\to \reals^N$, with standard $L^2$ inner product denoted by $\langle q_1,q_2\rangle$ and distance function defined by $d(q_1,q_2)=\sqrt{\langle q_1-q_2,q_1-q_2 \rangle}$. Define a function $Q:AC_0(I,\reals^N)\to L^2(I,\reals^N)$ by $Q(f)=V\circ f'$. It's easy to see that $Q$ is bijective (this is proved in \cite{RobinsonDissertation}); in fact given a function $q\in L^2(I)$, we can define $f(t)=\int_0^t\,q(u)|q(u)|\,du$, and then verify that $Q(f)=q$.

Since $\tilde\Gamma$ acts on $AC_0(I,\reals^N)$ and $Q:AC_0(I,\reals^N)\to L^2(I,\reals^N)$ is bijective, we can define an action of $\tilde\Gamma$ on $L^2(I,\reals^N)$ in a unique way to make $Q$ equivariant. In fact, it is easy to verify that the corresponding right action of $\tilde\Gamma$ on $L^2(I,\reals^N)$ is given by $(q*\gamma)(t)=q(\gamma(t))\sqrt{\gamma'(t)}$. Furthermore, this action of $\tilde\Gamma$ on $L^2(I,\reals^N)$ is by isometries since
$$\langle q_1*\gamma,q_2*\gamma\rangle=\int_0^1 q_1(\gamma(t))\sqrt{\gamma'(t)}q_2(\gamma(t))\sqrt{\gamma'(t)}\,dt$$
$$ = \int_0^1 q_1(\gamma(t))q_2(\gamma(t))\gamma'(t)\,dt=\int_0^1q_1(u)q_2(u)\,du=\langle q_1,q_2\rangle.
$$

Note that for the second to last equality, we relied on integration by substitution, which is valid because $\gamma$ is absolutely continuous. This is one important reason for insisting that our reparameterization functions are absolutely continuous. Thus, we replace our study of the action of $\tilde\Gamma$ on $AC_0(I,\reals^N)$ by the study of the corresponding action of $\tilde\Gamma$ on $L^2(I,\reals^N)$, which is an action by linear isometries. 
In what follows we will be interested both in the action of $\tilde\Gamma$, and in the restricted action of $\Gamma$. Note to the reader: Our definition of ``action by isometries" is simply that for all $\gamma\in\tilde\Gamma$ and for all $q_1,q_2\in L^2(I,\reals^N)$, $\langle q_1*\gamma,q_2*\gamma\rangle=\langle q_1,q_2\rangle$. While this equation implies that the map $L^2(I,\reals^N)\to L^2(I,\reals^N)$ induced by each $\gamma$ is injective, it does not imply that it is surjective. For example, suppose that $\gamma\in\tilde\Gamma$ is constant on some subinterval of $I$. Then for all $q\in L^2(I,\reals^N)$, $q*\gamma=0$ on this same subinterval. Of course, for $\gamma\in\Gamma$, the induced map is bijective, since $\Gamma$ is a group. 

Denote by $U(I,\reals^N)$ the unit sphere $\{q\in L^2(I,\reals^N):\int_0^1|q(t)|^2\,dt=1\}$. This corresponds to the set of functions in $AC_0(I,\reals^N)$ having arc length 1, since if $Q(f)=q$, it follows that $|f'(t)|=|q(t)|^2$, and the arclength of $f$ can be written as $\int_0^1|f'(t)|\,dt$. $U(I,\reals^N)$ is an invariant subset of $L^2(I,\reals^N)$ under the action of $\tilde\Gamma$, as is the sphere of any radius centered at $0$ in $L^2(I,\reals^N)$. If we wish to compare two curves in a way that is invariant to rescaling, a natural way to do this is to rescale both of them to have unit length before comparing them. Hence, we sometimes concentrate on the action of $\tilde\Gamma$ on $U(I,\reals^N)$. 

$U(I,\reals^N)$ is an infinite dimensional submanifold of $L^2(I,\reals^N)$. If we think of it as a Riemannian manifold, using the $L^2$-inner product as a Riemannian metric, then the geodesics are the arcs of great circles, where by ``great circle" we mean the intersection of $U(I,\reals^N)$ with any 2-dimensional linear subspace of $L^2(I,\reals^N)$. The corresponding (geodesic) distance function between any $q_1$ and $q_2$ in $U(I,\reals^N)$ is given by $\cos^{-1}(\langle q_1,q_2\rangle)$. Note that $\Gamma$ and $\tilde\Gamma$ act on $U(I,\reals^N)$ by isometries. 

We now return our attention to the action of $\Gamma$ on $L^2(I,\reals^N)$. Given $q\in L^2(I,\reals^N)$, let $q\Gamma$ denote the orbit of $q$ under $\Gamma$, and let $L^2(I,\reals^N)/\Gamma$ denote the set of all these orbits. Define a function $\rho:(L^2(I,\reals^N)/\Gamma)\times (L^2(I,\reals^N)/\Gamma)\to \reals$ by $\rho(q_1\Gamma,q_2\Gamma)=\inf_{(\gamma_1,\gamma_2)\in\Gamma\times\Gamma}d(q_1*\gamma_1,q_2*\gamma_2)=\inf_{\gamma\in\Gamma}d(q_1,q_2*\gamma)$. The last equality follows from the fact that $\Gamma$ acts by isometries. As usual, it's easy to show that $\rho$ is symmetric, satisfies the triangle inequality and is non-negative. However, it's also easy to find examples where $q_1\Gamma\neq q_2\Gamma$, but $\rho(q_1\Gamma,q_2\Gamma)=0$. The reason for this is that the orbits are not closed sets, so all you have to do is choose $q_2$ to be in the $L^2$-closure of $q_1\Gamma$, but not in the orbit itself, in order to create such an example.

For example, define $\tilde\gamma\in\tilde\Gamma$ by 
\begin{equation*}
\tilde\gamma(t)=
\begin{cases}
2t& \text{for $t<.5$}
\\
1& \text{for $t\geq .5$}
\end{cases}
\end{equation*}

While $\tilde\gamma\notin\Gamma$, we now construct a sequence $\{\gamma_n\}$ in $\Gamma$ such that $\{\sqrt{\gamma_n'}\}$ approaches $\sqrt{\tilde\gamma'}$ in the $L^2$ sense. To do this, let
\begin{equation*}
\gamma_n(t)=
\begin{cases}
(2-\frac{1}{n})t& \text{for $0\leq t\leq .5$}
\\
(1-\frac{1}{n})+\frac{1}{n}t& \text{for $.5<t\leq 1$}
\end{cases}
\end{equation*}

Now, let $q_0(t)\equiv c$ denote a constant function, where $c\in \reals^N$ is any nonzero vector. For each $\gamma\in\Gamma$, $(q_0*\gamma)(t)=\sqrt{\gamma'(t)}c$. Then $\sqrt{\tilde\gamma'}c\notin q_0\Gamma$, but $\sqrt{\tilde\gamma'}c$ is a limit point of $q_0\Gamma$ because each $\sqrt{\gamma_n'}c$ is in the orbit $q_0\Gamma$, and clearly $\sqrt{\gamma_n'}c\to \sqrt{\tilde\gamma'}c$ with respect to the $L^2$ norm. Hence $q_0*\tilde\gamma\not\in q_0\Gamma$, but $q_0*\tilde\gamma$ is in the closure of $q_0\Gamma$.

\begin{lemma} Assume $q_1$ and $q_2$ are elements of $L^2(I,\reals^N)$. Then $\rho(q_1\Gamma,q_2\Gamma)=0$ if and only if $q_1\Gamma$  and $q_2\Gamma$ have the same closure in $L^2(I,\reals^N)$.
\end{lemma}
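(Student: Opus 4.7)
The plan is to prove the two implications separately, using the fact (already established in the excerpt) that $\Gamma$ acts on $L^2(I,\reals^N)$ by linear isometries.

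For the forward direction, suppose $\rho(q_1\Gamma,q_2\Gamma)=0$. Unfolding the definition of $\rho$, this furnishes a sequence $\{\gamma_n\}\subseteq \Gamma$ with $d(q_1,q_2*\gamma_n)\to 0$, so that $q_1\in \overline{q_2\Gamma}$. I would then upgrade this single containment to $q_1\Gamma\subseteq \overline{q_2\Gamma}$: for any $\gamma'\in\Gamma$, the isometry property gives $d(q_1*\gamma',(q_2*\gamma_n)*\gamma')=d(q_1,q_2*\gamma_n)\to 0$, and because $\Gamma$ is a group under composition, each $(q_2*\gamma_n)*\gamma'=q_2*(\gamma_n\circ\gamma')$ lies in $q_2\Gamma$. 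Hence $q_1*\gamma'\in\overline{q_2\Gamma}$ for every $\gamma'\in\Gamma$, which gives $q_1\Gamma\subseteq \overline{q_2\Gamma}$, and taking closures yields $\overline{q_1\Gamma}\subseteq \overline{q_2\Gamma}$. The reverse inclusion follows by symmetry, since $\rho(q_2\Gamma,q_1\Gamma)=0$ as well, and the same argument applies with the roles of $q_1$ and $q_2$ swapped.

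For the reverse direction, suppose $\overline{q_1\Gamma}=\overline{q_2\Gamma}$. Then $q_1\in\overline{q_2\Gamma}$, so there is a sequence $\{\gamma_n\}\subseteq\Gamma$ with $q_2*\gamma_n\to q_1$ in $L^2$. Consequently $\rho(q_1\Gamma,q_2\Gamma)\leq d(q_1,q_2*\gamma_n)\to 0$, and non-negativity of $\rho$ forces $\rho(q_1\Gamma,q_2\Gamma)=0$.

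I do not anticipate a real obstacle in this argument: the example preceding the lemma (with the constant function $q_0$ and the non-invertible $\tilde\gamma$) already explains why we are forced to pass to closures rather than hope for $q_1\Gamma=q_2\Gamma$, but once the isometric-action property and the group structure of $\Gamma$ are in hand, both directions reduce to direct manipulations of convergent sequences. The only step requiring a moment's care is verifying that $(q_2*\gamma_n)*\gamma'\in q_2\Gamma$, which uses that $\Gamma$ (unlike $\tilde\Gamma$) is a group.
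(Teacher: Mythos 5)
Your proof is correct and takes essentially the same approach as the paper: both directions hinge on the isometric action of $\Gamma$ and the fact that $\Gamma$ is a group, with the forward direction first showing $q_1\in\overline{q_2\Gamma}$ and then upgrading to $\overline{q_1\Gamma}\subseteq\overline{q_2\Gamma}$ by symmetry. Your explicit computation $d(q_1*\gamma',(q_2*\gamma_n)*\gamma')=d(q_1,q_2*\gamma_n)$ is merely a spelled-out version of the paper's remark that ``there was nothing special about the orbit representatives we chose.''
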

\begin{proof} Suppose  $\rho(q_1\Gamma,q_2\Gamma)=0$. Then there exist sequences $\{\gamma_n\}$ and  $\{\tilde\gamma_n\}$ in $\Gamma$ such that $\lim_{n\to\infty} d(q_1*\gamma_n,q_2*\tilde\gamma_n)=0$. Because $\Gamma$ acts by isometries, it follows that $\lim_{n\to\infty} d(q_1,q_2*\tilde\gamma_n\gamma_n^{-1})=0$, proving $q_1$ is in the closure of $q_2\Gamma$. Since there was nothing special about the orbit representatives we chose, and the argument is symmetric, if follows that each orbit is in the closure of the other; hence, $q_1\Gamma$ and $q_2\Gamma$ have the same closure. The other direction is obvious and we omit it. 
\end{proof}

Because of this lemma, the closure of an orbit is a union of orbits; if we define a binary relation on $L^2(I,\reals^N)$ by stipulating that $q_1\sim q_2$ means that $q_1\Gamma$ and $q_2\Gamma$ have the same closure, then $\sim$  is an equivalence relation. Let  ${\mathcal S}(I,\reals^N)=L^2(I,\reals^N)/\sim$. Henceforth, we will use the symbol $[q]$ to denote the point in ${\mathcal S}(I,\reals^N)$ corresponding to the closure of the orbit $q\Gamma$. We will loosely refer to this as the ``orbit" of $q$, even though it is actually a closed-up orbit.  It is easily verified that our distance function $d$ on $L^2(I,\reals^N)$ induces a metric, which we also call $d$, on ${\mathcal S}(I,\reals^N)$, defined by 
$$d([q_1],[q_2])=\inf_{w_1\in[q_1],w_2\in[q_2]} d(w_1,w_2)$$
and this metric induces the quotient topology.  

We now prove a theorem giving a general form for elements of $L^2(I,\reals^N)$.

\begin{thm}\label{arclengthpar} Let $q\in L^2(I, \reals^N)$. Then $q$ can be written in a unique way as $q=w*\gamma$ where $\gamma\in\tilde\Gamma$ and $w\in L^2(I,\reals^N)$ has the property that $|w(t)|$ is constant a.e. for $t\in I$.
\end{thm}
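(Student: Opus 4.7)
The plan is to read off $\gamma$ and $w$ as the (normalized) arclength function of the underlying curve $f=Q^{-1}(q)$ and as the SRVF of its arclength reparametrization, respectively. Set $L=\int_0^1|q(u)|^2\,du$. The case $L=0$ is degenerate: then $q=0$ a.e., the $\tilde\Gamma$-action preserves the $L^2$ norm, so $w=0$ is forced and $\gamma$ is not pinned down; henceforth assume $L>0$. Define $\gamma(t)=L^{-1}\int_0^t|q(u)|^2\,du$; clearly $\gamma\in\tilde\Gamma$ with $\gamma'(t)=|q(t)|^2/L$ a.e. Since $\gamma(t_1)=\gamma(t_2)$ (for $t_1<t_2$) forces $q\equiv 0$ a.e.\ on $[t_1,t_2]$, hence $f'\equiv 0$ there and $f(t_1)=f(t_2)$, the prescription $h(\gamma(t))=f(t)$ unambiguously defines a function $h\colon[0,1]\to\reals^N$ on the range of $\gamma$, which is all of $[0,1]$ by continuity. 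Choosing preimages under $\gamma$ and using $|f(t_2)-f(t_1)|\le\int_{t_1}^{t_2}|q|^2\,du=L(\gamma(t_2)-\gamma(t_1))$ gives $|h(s_2)-h(s_1)|\le L|s_2-s_1|$; so $h$ is Lipschitz, hence absolutely continuous, with $h(0)=0$.

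Set $w=Q(h)=V\circ h'$. The chain rule for AC compositions applied to $f=h\circ\gamma$ yields $f'(t)=h'(\gamma(t))\gamma'(t)$ a.e., so the identity $|f'(t)|=|q(t)|^2=L\gamma'(t)$ forces $|h'(\gamma(t))|=L$ wherever $\gamma'(t)>0$; change of variables for the AC monotone map $\gamma$ then promotes this to $|h'(s)|=L$ for a.e.\ $s\in[0,1]$, so $|w(s)|\equiv\sqrt{L}$ a.e. A brief calculation now shows $(w*\gamma)(t)=V(h'(\gamma(t)))\sqrt{\gamma'(t)}=V(f'(t))=q(t)$ wherever $\gamma'(t)>0$, while both sides vanish on $\{\gamma'=0\}$, since $|q|^2=L\gamma'$ there.

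For uniqueness, suppose $q=w_1*\gamma_1=w_2*\gamma_2$ with $|w_i|\equiv c_i$ a.e. Taking pointwise magnitudes gives $|q(t)|^2=c_i^2\gamma_i'(t)$; integrating forces $c_i^2=L$ and then $\gamma_i(t)=L^{-1}\int_0^t|q|^2\,du$, so $\gamma_1=\gamma_2=\gamma$. From $w_1*\gamma=w_2*\gamma$ we obtain $w_1\circ\gamma=w_2\circ\gamma$ a.e.\ on $\{\gamma'>0\}$, and applying change of variables to $\phi=|w_1-w_2|^2$ gives $\int_0^1|w_1-w_2|^2\,ds=\int_0^1\phi(\gamma(t))\gamma'(t)\,dt=0$, so $w_1=w_2$ a.e.

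The main obstacle I expect is the construction of $h$ and the verification that $|h'|\equiv L$ a.e.: this is essentially the classical existence of an arclength parametrization for an AC curve, but because $\gamma$ is only monotone non-decreasing (not strictly so), one must carefully justify well-definedness of $h$ on the flat portions where $q$ vanishes and repeatedly invoke change of variables for AC monotone maps to transfer a.e.\ statements between $t\in I$ and $s=\gamma(t)\in I$. Everything else---verifying $\gamma\in\tilde\Gamma$, checking $q=w*\gamma$, and completing uniqueness---is routine integration by substitution, which the discussion preceding the theorem has already noted is valid for AC reparametrizations.
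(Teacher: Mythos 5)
Your proof is correct and follows essentially the same strategy as the paper: read off $\gamma$ as the normalized arclength function of $f=Q^{-1}(q)$, take $h$ to be the resulting constant-speed reparametrization of $f$, and set $w=Q(h)$. The paper simply cites Stein--Shakarchi (Theorems 4.1 and 4.3) for the existence of the constant-speed parametrization of an absolutely continuous curve and declares the claim immediate, whereas you carry out the construction from scratch, supply the uniqueness argument explicitly, and flag the degenerate case $q\equiv 0$ where uniqueness of $\gamma$ in fact fails---details the paper leaves implicit.
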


\begin{proof}
By Theorem 4.1 of \cite{Stein-Shakarchi-Real-Analysis-book}, every absolutely continuous function on a closed interval is rectifiable, and by Theorem 4.3 of the same book, every rectifiable function has a constant speed parametrization, with the reparametrizing function being absolutely continuous. Since for every $q\in L^2(I, \reals^N)$, there is an absolutely continuous function $f$ such that $q=Q(f)$, our lemma follows immediately.
\end{proof}

We now wish to focus on a particular subset of ${\mathcal S}(I,\reals^N)$. We define a function $q:I\to \reals^N$ to be a {\em step function} if we can express $I$ as a finite disjoint union of subintervals on each of which $q$ is constant. Since we will only use this concept for $L^2$ functions, we don't care what happens at the endpoints of each subinterval. It is a well-known fact (Prop. 10, p.151 of \cite{RoydenFitzpatrick}) that the step functions are dense in $L^2(I,\reals^N)$. We define ${\mathcal S}(I,\reals^N)_{st}\subset{\mathcal S}(I,\reals^N)$ to be the set of all equivalence classes that contain at least one step function. We see immediately that ${\mathcal S}(I,\reals^N)_{st}$ is dense in ${\mathcal S}(I,\reals^N)$. Furthermore, under our bijection $Q$ between $AC_0$ and $L^2$, the step functions in $L^2$ correspond to the piecewise linear functions (with finitely many pieces) in $AC_0$. 

Later in this paper, we will prove that given two elements $[q]$ and $[w]$ of ${\mathcal S}(I,\reals^N)$, if at least one of them is in  ${\mathcal S}(I,\reals^N)_{st}$, then there exist elements $\tilde q\in[q]$ and $\tilde w\in[w]$ such that $d(\tilde q,\tilde w)=d([q],[w])$. We will also prove that if both $[q]$ and $[w]$ are in ${\mathcal S}(I,\reals^N)_{st}$, then $\tilde q$ and $\tilde w$ can both be chosen to be step functions. As a result, it will follow that ${\mathcal S}(I,\reals^N)_{st}$ is a geodesically convex subset of ${\mathcal S}(I,\reals^N)$, in the sense that given $[q]$ and $[w]$ in ${\mathcal S}(I,\reals^N)_{st}$, there exists a minimal geodesic in ${\mathcal S}(I,\reals^N)$ joining $[q]$ to $[w]$ that lies entirely in ${\mathcal S}(I,\reals^N)_{st}$.

\section{Orbit Structure in ${\mathcal S}(I,\reals)$}

In this section, we will specialize to the case $N=1$, so all of our functions are scalar valued instead of vector valued. In this case, $\Gamma\subset AC_0(I,\reals)$, and $Q(\Gamma)\subset U(I,\reals)$. In fact, $Q(\Gamma)=\{q\in U(I,\reals):q(t)>0 \hbox{ a.e.}\}$. To see this, we refer to Exercise 3.21 on page 82 of \cite{Leoni-Sobolev-book}, which states that a continuous, strictly increasing function $u:[a,b]\to \reals$ has an absolutely continuous inverse if and only if the set on which its derivative vanishes has measure 0. It is then immediate that the closure of $Q(\Gamma)$ in $U(I,\reals)$ is $\{q\in U(I,\reals):q(t)\geq 0 \hbox{ a.e.}\}$. Note that this last set is equal to $Q(\tilde\Gamma)$. Hence, the closure of $Q(\Gamma)$ in $U(I,\reals)$ is $Q(\tilde\Gamma)$.

We begin by examining the simplest orbit in $U(I,\reals)$. Let $q_0\equiv 1$ denote the constant function. What can we say about the orbit $[q_0]$? 
\medskip
\begin{lemma} $[q_0]=\hbox{Cl}(Q(\Gamma))=\{g\in U(I,\reals):  \hbox{ for all } t\in I, g(t)\geq 0 \hbox{ a.e.}\}=Q(\tilde\Gamma)$. \end{lemma}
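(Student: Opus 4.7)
The plan is to reduce the entire statement to the observation made in the paragraph immediately preceding the lemma, by identifying the orbit $q_0\Gamma$ with $Q(\Gamma)$.

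First I would compute, using the formula $(q*\gamma)(t)=q(\gamma(t))\sqrt{\gamma'(t)}$, that for the constant function $q_0\equiv 1$ we have $(q_0*\gamma)(t)=\sqrt{\gamma'(t)}$ for every $\gamma\in\Gamma$. Next I would observe that for such $\gamma$, the SRVF $Q(\gamma)=V\circ\gamma'$ equals $\gamma'/\sqrt{|\gamma'|}=\sqrt{\gamma'}$ almost everywhere, since $\gamma'>0$ a.e.\ on $I$. Combining these two facts gives the set-theoretic identity $q_0\Gamma=\{\sqrt{\gamma'}:\gamma\in\Gamma\}=Q(\Gamma)$.

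By the definition of $[q_0]$, it is the $L^2$-closure of $q_0\Gamma$, which by the previous step equals the $L^2$-closure of $Q(\Gamma)$. Since $q_0\in U(I,\reals)$ and $\tilde\Gamma$ acts by isometries, the orbit lies in the unit sphere $U(I,\reals)$, which is closed in $L^2(I,\reals)$; hence the $L^2$-closure of $Q(\Gamma)$ agrees with its closure inside $U(I,\reals)$. The paragraph preceding the lemma, citing Exercise 3.21 of Leoni, already identified this closure as $Q(\tilde\Gamma)=\{g\in U(I,\reals):g(t)\geq 0 \text{ a.e.}\}$. Chaining these equalities yields all four sets in the lemma statement.

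The only substantive input is the characterization (from Leoni) of when a continuous strictly increasing absolutely continuous function has an absolutely continuous inverse, and that has already been invoked above the lemma. So I do not anticipate any real obstacle: the proof amounts to the one-line computation $q_0*\gamma=\sqrt{\gamma'}=Q(\gamma)$ together with careful bookkeeping of where the closure is being taken.
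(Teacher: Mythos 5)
Your proposal is correct and follows essentially the same route as the paper: compute $q_0*\gamma=\sqrt{\gamma'}=Q(\gamma)$, identify $q_0\Gamma$ with $Q(\Gamma)$, and then invoke the characterization of $\mathrm{Cl}(Q(\Gamma))=Q(\tilde\Gamma)$ established in the paragraph preceding the lemma. Your added observation that the $L^2$-closure and the closure inside $U(I,\reals)$ coincide (because the unit sphere is closed in $L^2$) is a small piece of bookkeeping the paper leaves implicit, but it does not change the argument.
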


\begin{proof} Given $\gamma\in\Gamma$, it's immediate that $(q_0*\gamma)(t)=\sqrt{\gamma'(t)}=Q(\gamma)(t)$. The lemma then follows immediately from the previous paragraph. 
\end{proof}

\begin{corollary}\label{warpinterval} Given any $g_1\in U(I,\reals)$ and $g_2\in U(I,\reals)$ satisfying $g_1(t)\geq0$ a.e. and $g_2(t)\geq0$ a.e., there is a sequence $\{\gamma_n\}$ in $\Gamma$ such that $g_1*\gamma_n\to g_2$ in the $L^2$ metric.
\end{corollary}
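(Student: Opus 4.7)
The plan is to reduce this corollary immediately to the preceding Lemma together with the definition of the equivalence relation $\sim$. Since both $g_1$ and $g_2$ are nonnegative elements of $U(I,\reals)$, the Lemma places each of them in the equivalence class $[q_0]$ of the constant function $q_0 \equiv 1$. By the very definition of $\sim$, this means the orbits $g_1\Gamma$ and $g_2\Gamma$ share a common $L^2$-closure.

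Next, I would take $\gamma$ to be the identity element of $\Gamma$ to observe that $g_2 \in g_2\Gamma$; hence $g_2$ lies in the closure of $g_2\Gamma$, which by the previous paragraph equals the closure of $g_1\Gamma$. Unwinding the definition of closure in the $L^2$ metric, there must then exist a sequence $\{\gamma_n\} \subset \Gamma$ with $g_1 * \gamma_n \to g_2$ in $L^2$, which is exactly the conclusion.

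I do not foresee any genuine obstacle here: all of the substantive content has already been absorbed into the preceding Lemma, which pins down the closure of the single orbit $q_0\Gamma$ as the entire set of nonnegative unit-norm functions. The corollary is essentially the two-sided rephrasing of this fact --- the closure of \emph{any} such nonnegative orbit $g_1\Gamma$ must similarly be all of $Q(\tilde\Gamma)$, and in particular it contains $g_2$. If one preferred a more concrete construction, one might instead build $\gamma_n$ from the primitives $F_i(t) = \int_0^t g_i(u)^2\,du$ after first perturbing them to be strictly increasing and then composing inverses, but the indirect argument through the equivalence class is much cleaner.
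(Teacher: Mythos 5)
Your argument is correct and is essentially the same as the paper's: both deduce from the preceding Lemma that $g_1$ and $g_2$ lie in $[q_0]$, conclude $[g_1]=[g_2]$, and then unwind the definition of $\sim$ (via Lemma~1) to produce the approximating sequence. You simply spell out the steps that the paper compresses into ``The corollary follows immediately.''
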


\begin{proof} Since $g_1$ and $g_2$ are both elements of Cl$(Q(\Gamma))=[q_0]$, it follows that $[g_1]=[g_2]=[q_0]$. The corollary follows immediately.
 \end{proof}

Let us think of $q_0\equiv 1$ as the ``north pole" of $U(I,\reals)$, and $-q_0\equiv -1$ as the ``south pole". Then the ``upper hemisphere" is $\{g\in U(I,\reals):\int_0^1\,g(t)\,dt\geq0\}$, the ``lower hemisphere" is $\{g\in U(I,\reals):\int_0^1\,g(t)\,dt\leq0\}$, and the ``equatorial sphere" is $\{g\in U(I,\reals):\int_0^1\,g(t)\,dt=0\}$. 

We next note that $[q_0]$ is completely contained in the upper hemisphere, and in fact does not intersect the equatorial sphere. That's because if $\int_0^1\,g(t)^2\,dt=1$ but $\int_0^1\,g(t)\,dt\leq0$, then it is clear that there must be a set of measure greater than zero on which $g(t)<0$. 

We now observe that even though $[q_0]$ does not intersect the equatorial sphere, it does contain points arbitrarily close to the equatorial sphere! To see this, let $0<\epsilon<1$, and define $g_\epsilon\in U(I,\reals)$ by
\begin{equation*}
g_\epsilon(t)=
\begin{cases}
1/\sqrt{\epsilon}& \text{for $t<\epsilon$}
\\
0&\text{for $t\geq\epsilon$}
\end{cases}
\end{equation*} 
and define $v_\epsilon\in U(I,\reals)$ by 
\begin{equation*}
v_\epsilon(t)=
\begin{cases}
\sqrt{\frac{1-\epsilon}{\epsilon}}& \text{for $t<\epsilon$}
\\
-\sqrt{\frac{\epsilon}{1-\epsilon}}& \text{for $t\geq\epsilon$}.
\end{cases}
\end{equation*}
Clearly, $g_\epsilon\in[q_0]$, $v_\epsilon$ is in the equatorial sphere, and $d(g_\epsilon,v_\epsilon)=\sqrt{2-2\sqrt{1-\epsilon}}$, which can be made as small as desired by taking $\epsilon$ small.

The next lemma will calculate the distance between two specific orbits, and will provide actual orbit representatives that realize this distance.

\begin{remark} Since $\Gamma$ and $\tilde\Gamma$ act by isometries on $L^2(I,\reals^N)$, it follows that minimizing the distance between orbit representatives of $[q]$ and $[w]$ is equivalent to maximizing their $L^2$ inner products.
\end{remark}

\begin{lemma}\label{geod1} Suppose the unit interval is expressed as a disjoint union of measurable sets, $I=A\cup B$, where  $A$ is assumed to have measure $a$. (For purposes of visualization, the reader may want to keep in mind the case in which $A$ and $B$ are simply subintervals of $I$.) Define
\begin{equation*}
w(t)=
\begin{cases}
1& \text{for $t\in A$}
\\
-1& \text{for $t\in B$.}
\end{cases}
\end{equation*} 
Then $d([q_0],[w])=\sqrt{2-2\sqrt{a}}$, and this distance is realized by the orbit representatives $w\in[w]$ and $q_A\in[q_0]$, where we define
\begin{equation*}
q_A(t)=
\begin{cases}
1/\sqrt{a}& \text{for $t\in A$}
\\
0& \text{for $t\in B$.}
\end{cases}
\end{equation*}
\end{lemma}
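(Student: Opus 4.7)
The plan is to use the Remark to convert $d([q_0],[w])$ into a supremum of inner products. Since both orbits lie inside $U(I,\reals)$, every representative has unit norm, so $d(g_1,g_2)^2 = 2 - 2\langle g_1,g_2\rangle$ for any $g_1\in[q_0]$, $g_2\in[w]$, and hence $d([q_0],[w])^2 = 2 - 2M$ where
\[
M = \sup\{\langle g_1,g_2\rangle : g_1\in[q_0],\ g_2\in[w]\}.
\]
By the previous lemma, $[q_0]=Q(\tilde\Gamma)$ is exactly the set of nonnegative elements of $U(I,\reals)$. Using that $\Gamma$ acts by isometries and is a group, I would reduce to fixing the representative $w\in[w]$ and varying $g_1=h$ over $[q_0]$, writing
\[
M = \sup\{\langle h, w\rangle : h\in L^2(I,\reals),\ h\geq 0 \text{ a.e.},\ \langle h,h\rangle=1\}
\]
(the sup over the closure equals the sup over the orbit itself by $L^2$-continuity of $\langle\cdot,w\rangle$).

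The central estimate is then a Cauchy--Schwarz argument tailored to the sign pattern of $w$. Splitting the integral across $A$ and $B$,
\[
\int_0^1 h\,w\,dt \;=\; \int_A h\,dt \;-\; \int_B h\,dt \;\leq\; \int_A h\,dt \;\leq\; \Bigl(\int_A 1\,dt\Bigr)^{1/2}\Bigl(\int_A h^2\,dt\Bigr)^{1/2} \;\leq\; \sqrt{a}.
\]
The first inequality uses $h\geq 0$ on $B$; the second is Cauchy--Schwarz; the third uses $\int_A h^2\leq \int_0^1 h^2=1$. This yields $M\leq\sqrt{a}$.

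To finish, I would read off the equality cases to obtain both the sharp value and the explicit minimizer. Equality in the first inequality forces $h=0$ a.e.\ on $B$, and equality in Cauchy--Schwarz forces $h$ to be a constant multiple of the characteristic function of $A$; together with the normalization $\langle h,h\rangle=1$, this forces $h=q_A$. A direct computation then yields $\langle q_A,w\rangle = a\cdot(1/\sqrt{a})=\sqrt{a}$, so $M=\sqrt{a}$ and $d([q_0],[w])=\sqrt{2-2\sqrt{a}}$. Since $q_A$ is nonnegative and has unit $L^2$ norm, $q_A\in[q_0]$, and therefore $(q_A,w)$ is a pair of orbit representatives realizing the distance.

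The main obstacle I anticipate is bookkeeping rather than mathematical depth: carefully justifying that the two-sided infimum over $\Gamma\times\Gamma$ collapses, via the group structure and $\Gamma$-isometry, to a one-sided optimization with $w$ fixed, and that taking the supremum over the closed-up orbit $[q_0]$ coincides with the supremum over $q_0\Gamma$. Once those reductions are in place, the actual extremum is a constrained linear optimization over the positive part of the unit sphere in $L^2(I,\reals)$, and the single constraint $h\geq 0$ is handled by the simple observation that the term $-\int_B h$ can only lower the inner product, so one discards it before applying Cauchy--Schwarz.
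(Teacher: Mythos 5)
Your proof is correct and follows essentially the same route as the paper's: both reduce $d([q_0],[w])$ to maximizing $\langle q,w\rangle$ over $q\in[q_0]$ with $w$ fixed, split the integral across $A$ and $B$, discard the nonpositive contribution from $B$ using $q\geq 0$, and bound the integral over $A$ by Cauchy--Schwarz, identifying $q_A$ as the maximizer. Your added discussion of the one-sided reduction via the group structure of $\Gamma$ and the closure argument, and of the equality cases in the inequalities, are correct elaborations of details the paper leaves implicit.
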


\begin{proof} First, it's easily verified that $\int_0^1w(t)q(t)\,dt=\sqrt{a}$ (implying $d(q,w)=\sqrt{2-2\sqrt{a}}$). So we just need to prove that this is the maximum over all representatives of $[q_0]$. In what follows, the key step will be the Cauchy Schwarz inequality, which states that for arbitrary $f$ and $g$ in $L^2(A)$, $|\int_A f(t)g(t)\,dt|\leq\left(\int_Af(t)^2\,dt\right)^{1/2}\left(\int_Ag(t)^2\,dt\right)^{1/2}.$

Continuing with the proof, choose an arbitrary $q\in[q_0]$. Then calculate 
$$\int_0^1q(t)w(t)\,dt
=\int_Aq(t)w(t)\,dt+\int_Bq(t)w(t)\,dt.$$
Clearly $\int_Bq(t)w(t)\,dt\leq 0$ since $q(t)\geq0$ and $w(t)\leq0$ on $B$. By Cauchy Schwarz, 
$\int_Aq(t)w(t)\,dt\leq1\cdot\sqrt{a}$. These two bounds imply that $\int_0^1q(t)w(t)\,dt\leq\sqrt{a}$. 
\end{proof}

We remark that since geodesics in $L^2(I,\reals^N)$ are straight lies, it is easy to write down a specific shortest geodesic from $q_A$ to $w$ in the lemma above. It's also easy to verify that the image of this geodesic in ${\mathcal S}(I,\reals)$ is a geodesic (in fact, a shortest geodesic) in this quotient space, according to the usual definition of geodesic used in metric spaces: 

{\bf Definition:} A {\em geodesic} in ${\mathcal S}(I,\reals)$ is a continuous function $\alpha:[0,L]\to{\mathcal S}(I,\reals)$ with the property that there exists a positive number $K$ such that for all $s\in[0,L]$, there exists $\epsilon>0$ such that for all $t_1, t_2\in(s-\epsilon,s+\epsilon)\cap[0,L]$, $d(\alpha(t_1),\alpha(t_2))=K|t_2-t_1|$.

Some fundamental problems regarding geodesics in ${\mathcal S}(I,\reals)$ are: (1) Given any two points in ${\mathcal S}(I,\reals)$ is there a geodesic joining them? Is there a shortest geodesic? Can we find this geodesic in some reasonable way, for some reasonable set of points in ${\mathcal S}(I,\reals)$? In Lemma \ref{geod1}, we found a precise description of the shortest geodesic between the particular orbit $[q_0]\in{\mathcal S}(I,\reals)$, and any orbit of the form $[w]$, where $w$ has the form
\begin{equation*}
w(t)=
\begin{cases}
1& \text{for $t\in A$}
\\
-1& \text{for $t\in B$.}
\end{cases}
\end{equation*}
However, in Theorem \ref{arclengthpar}, we proved that every element of $U(I,\reals)$ can be uniquely expressed as $w*\gamma$, where $w$ is of the form described above and $\gamma\in\tilde\Gamma$. (In fact, we proved a more general version of this for $U(I,\reals^N)$). Hence, we now know that for every orbit $[q]\in U(I,\reals)$, there is a unique shortest geodesic in ${\mathcal S}(I,\reals)$ joining $[q]$ to the particular orbit $[q_0]$, and in fact we have given a precise description of that geodesic.

\begin{lemma}\label{fixup} Let $0\leq a<b\leq 1$ and suppose $q$ and $w$ are two elements of  $U(I,\reals)$ with the following three properties:
\begin{enumerate}
\item For all $t\in(a,b)$, $q(t)\geq0$ and $w(t)\geq0$.
\item $\int_a^b q(t)^2dt=\int_a^b w(t)^2dt$
\item For all $t\not\in(a,b)$, $q(t)=w(t)$.

\end{enumerate}
Then $[q]=[w]$.
\end{lemma}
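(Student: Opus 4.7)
The plan is to reduce the statement to Corollary \ref{warpinterval} by rescaling the portion of $q$ and $w$ on $(a,b)$ to the whole interval $I$, warping there, and then extending the warping to all of $I$ by the identity outside $(a,b)$.

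First I would handle the degenerate case. Let $c^2 = \int_a^b q(t)^2\,dt = \int_a^b w(t)^2\,dt$. If $c = 0$, then $q = w = 0$ a.e.\ on $(a,b)$, so $q = w$ a.e.\ on $I$ by hypothesis (3), and the claim is trivial. Assume henceforth $c > 0$. Define rescaled restrictions $\tilde q, \tilde w : I \to \reals$ by
\[
\tilde q(s) = \frac{\sqrt{b-a}}{c}\, q\bigl(a+(b-a)s\bigr), \qquad \tilde w(s) = \frac{\sqrt{b-a}}{c}\, w\bigl(a+(b-a)s\bigr).
\]
A change of variables shows $\tilde q, \tilde w \in U(I,\reals)$, and hypothesis (1) says both are nonnegative a.e. By Corollary \ref{warpinterval}, there exists a sequence $\{\gamma_n\} \subset \Gamma$ such that $\tilde q * \gamma_n \to \tilde w$ in $L^2$.

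Next I would lift each $\gamma_n$ to a reparametrization $\bar\gamma_n \in \Gamma$ of $I$ supported in $(a,b)$: set $\bar\gamma_n(t) = t$ for $t \notin (a,b)$ and $\bar\gamma_n(t) = a + (b-a)\,\gamma_n\!\left(\frac{t-a}{b-a}\right)$ for $t \in [a,b]$. Since $\gamma_n(0)=0$, $\gamma_n(1)=1$, and $\gamma_n$ is absolutely continuous with $\gamma_n' > 0$ a.e., the pieces match at $t=a$ and $t=b$, and $\bar\gamma_n$ is absolutely continuous on $I$ with derivative $\bar\gamma_n'(t) = \gamma_n'((t-a)/(b-a))$ a.e.\ on $(a,b)$ and $1$ elsewhere; hence $\bar\gamma_n \in \Gamma$. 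A direct computation using the definition $(q * \bar\gamma_n)(t) = q(\bar\gamma_n(t))\sqrt{\bar\gamma_n'(t)}$ yields
\[
(q * \bar\gamma_n)(t) = \begin{cases} q(t) = w(t) & t \notin (a,b), \\[2pt] \dfrac{c}{\sqrt{b-a}}(\tilde q * \gamma_n)\!\left(\dfrac{t-a}{b-a}\right) & t \in (a,b), \end{cases}
\]
and similarly $w(t) = \frac{c}{\sqrt{b-a}}\,\tilde w\!\left(\frac{t-a}{b-a}\right)$ on $(a,b)$. Changing variables back, $\|q * \bar\gamma_n - w\|_{L^2(I)} = c\,\|\tilde q * \gamma_n - \tilde w\|_{L^2(I)} \to 0$, so $w \in \overline{q\Gamma}$. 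The argument is symmetric in $q$ and $w$, so $q \in \overline{w\Gamma}$, and therefore $[q] = [w]$.

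The only nontrivial point is verifying that $\bar\gamma_n$ lies in $\Gamma$ (not merely in $\tilde\Gamma$); this is exactly where the strict positivity $\gamma_n' > 0$ a.e.\ of the original warps is used, together with the fact that $\bar\gamma_n$ is the identity, and hence has derivative $1$, outside $(a,b)$. The reason the integral constraint (2) is needed is precisely to make the rescaled functions $\tilde q, \tilde w$ both unit vectors in $U(I,\reals)$, which is the hypothesis required to invoke Corollary \ref{warpinterval}.
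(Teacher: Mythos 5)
Your proof is correct and takes essentially the same route as the paper's: both reduce to Corollary \ref{warpinterval} and extend the resulting warps by the identity outside $(a,b)$. The only difference is cosmetic — the paper simply asserts that Corollary \ref{warpinterval} works just as well on $[a,b]$ with a non-unit square integral, whereas you make this rigorous by explicitly rescaling to the unit interval and normalizing to land in $U(I,\reals)$, which is the cleaner way to invoke the corollary as stated.
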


\begin{remark} We may replace condition (1) by the assumption that for all $t\in(a,b)$, $q(t)\leq0$ and $w(t)\leq0$, and the Lemma still holds, with the same proof.
\end{remark}

\begin{proof} By Corollary \ref{warpinterval}, there is a sequence $\{\lambda_n\}$ of absolutely continuous homeomorphisms $[a,b]\to[a,b]$ with the property that $\lambda_n^{-1}(0)$ has measure 0, such that in $L^2([a,b])$, $q*\lambda_n\to w$. (Note that in the proof of Corollary \ref{warpinterval} it makes no difference that we have changed the interval from $I$ to $[a,b]$ and changed the value of $\int_a^b q(t)^2dt=\int_a^b w(t)^2dt$ from 1 to whatever it is.) Then for each $n$, extend $\lambda_n$ to  $\gamma_n\in\Gamma$, by extending it as the identity outside $[a,b]$. It is then clear that in $L^2(I)$, $q*\gamma_n\to w$.  
\end{proof}

\begin{lemma}\label{charstep} Let $q\in L^2(I,\reals)$. Then $[q]\in {\mathcal S}(I,\reals)_{st}$ if and only if there is a finite sequence $0=t_0<t_1<\dots<t_n=1$ such that for each $j$, either $q(t)\geq0$ for all $t\in [t_{j-1},t_j]$ (a.e.), or $q(t)\leq0$ for all $t\in [t_{j-1},t_j]$ (a.e.).
\end{lemma}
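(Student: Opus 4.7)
The plan is to handle the two directions separately, with the backward direction being an application of Lemma \ref{fixup} and the forward direction requiring a subsequence argument on breakpoint locations.

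For the backward direction, assume the partition $0=t_0<t_1<\dots<t_n=1$ exists with the stated sign property. I will construct a step function $s\in[q]$ explicitly. On each subinterval $[t_{j-1},t_j]$, set $s$ equal to the constant $\sigma_j c_j$, where $\sigma_j\in\{+1,-1\}$ records the sign of $q$ on that subinterval (either choice is acceptable if $q\equiv 0$ there a.e.) and $c_j\geq 0$ is chosen so that $\int_{t_{j-1}}^{t_j}s(t)^2\,dt=\int_{t_{j-1}}^{t_j}q(t)^2\,dt$. Then I apply Lemma \ref{fixup} (using its remark when $\sigma_j=-1$) successively on each subinterval to modify $q$ into $s$ piece by piece; each application preserves the $\sim$ class, so $[q]=[s]\in\mathcal{S}(I,\reals)_{st}$.

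For the forward direction, suppose $[q]\in\mathcal{S}(I,\reals)_{st}$, so there is a step function $s\in[q]$ with breakpoints $0=s_0<s_1<\dots<s_m=1$ and constant values $v_j\in\reals$ on $(s_{j-1},s_j)$. Since $q\Gamma$ and $s\Gamma$ have the same closure in $L^2(I,\reals)$, $q$ lies in $\mathrm{Cl}(s\Gamma)$, so there is a sequence $\{\gamma_n\}\subset\Gamma$ with $s*\gamma_n\to q$ in $L^2$. Passing to a subsequence, I may assume $(s*\gamma_n)(t)\to q(t)$ for a.e. $t\in I$. Now set $t_j^{(n)}=\gamma_n^{-1}(s_j)\in[0,1]$, which satisfy $0=t_0^{(n)}<t_1^{(n)}<\dots<t_m^{(n)}=1$. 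By compactness of $[0,1]$ and iterated subsequence extraction over the finitely many indices $j\in\{0,\dots,m\}$, I further refine the subsequence so that $t_j^{(n)}\to t_j$ for each $j$, giving $0=t_0\leq t_1\leq\dots\leq t_m=1$.

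Fix an index $j$ with $t_{j-1}<t_j$ and any $t\in(t_{j-1},t_j)$; then for all sufficiently large $n$, $t\in(t_{j-1}^{(n)},t_j^{(n)})$, so $\gamma_n(t)\in(s_{j-1},s_j)$ and thus $(s*\gamma_n)(t)=v_j\sqrt{\gamma_n'(t)}$, which has the sign of $v_j$ (or equals zero). Taking the a.e. pointwise limit yields that $q(t)$ has the sign of $v_j$ (or equals zero) for a.e. $t\in(t_{j-1},t_j)$, so on this subinterval either $q\geq 0$ a.e. or $q\leq 0$ a.e. Discarding indices with $t_{j-1}=t_j$ and relabeling produces the required partition.

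The main obstacle I expect is coordinating two independent subsequence extractions (one for a.e. pointwise convergence $s*\gamma_n\to q$, one for convergence of the $m+1$ breakpoint locations $t_j^{(n)}$) so that the sign analysis on each subinterval is valid simultaneously. Since $m$ is finite, this reduces to a routine diagonalization; the key structural observation is simply that $\sqrt{\gamma_n'(t)}\geq 0$, so the $\Gamma$-action cannot alter the sign of a piecewise-constant function on a subinterval over which it is constant.
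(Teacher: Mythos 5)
Your proof is correct and follows essentially the same route as the paper: the backward direction via successive applications of Lemma~\ref{fixup}, and the forward direction by extracting an a.e.-convergent subsequence from $s*\gamma_n\to q$ while also extracting, by compactness, limits of the preimages $\gamma_n^{-1}(s_j)$. The only cosmetic difference is that you establish the sign of $q$ on each limiting subinterval by a direct pointwise-limit argument, whereas the paper uses a countable cover of the open interval by $\epsilon$-neighborhoods; both are valid since only an a.e.\ conclusion is needed.
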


\begin{proof} First, suppose there exists a finite sequence $0=t_0<t_1<\dots<t_n=1$ such that for each $j$, either $q(t)\geq0$ for all $t\in [t_{j-1},t_j]$, or $q(t)\leq0$ for all $t\in [t_{j-1},t_j]$. Then, by successive applications of Lemma \ref{fixup} we can find a new function in $[q]$ that is constant on each $[t_{j-1},t_j]$. It follows that $[q]\in{\mathcal S}(I,\reals)_{st}$. 

On the other hand, suppose $[q]\in{\mathcal S}(I,\reals)_{st}$. Let $w$ be a step function in $[q]$. It follows that there exists a sequence $\{\gamma_i\}$ in $\Gamma$ such that $w*\gamma_i\to q$ in $U(I,\reals)$ with respect to the $L^2$ metric. Because of this convergence in $L^2$, we may choose a subsequence of $\{\gamma_i\}$ such that $w*\gamma_i\to q$ a.e. Since $w$ is a step function, we can find a sequence $0=t_0<t_1<\dots<t_n=1$ such that $w$ is constant on each interval $(t_{j-1},t_j)$. For each  $j\in\{1,\dots,n-1\}$ and $i=1,2,3,\dots$, let $t_{j,i}=\gamma_i^{-1}(t_j)$. By compactness of $I$, we can replace the sequence $\{\gamma_i\}$ by a subsequence with the property that for each fixed $j_0$, the sequence $\{t_{j_0,i}\}$ converges to a number $\tilde t_{j_0}\in I$, as $i\to\infty$. Now, fix  $j\in\{1,\dots,n\}$. We will show that either $q(t)\geq0$ a.e. or $q(t)\leq0$ a.e. for $t\in(\tilde t_{j-1},\tilde t_j)$. WLG, assume that $w(t)\geq0$ on $(t_{j-1},t_j)$. Let $\tilde t\in(\tilde t_{j-1},\tilde t_j)$. By definition of the action of $\Gamma$, we know that $w*\gamma_i(t)\geq0$ for all $t\in(t_{j-1,i},t_{j,i})$. Choose an $\epsilon>0$ such that there exists an $N$ such that for all $i>N$, $(\tilde t-\epsilon,\tilde t+\epsilon)\subset(t_{j-1,i},t_{j,i})$. It follows that for all $i>N$, $w*\gamma_i(t)\geq 0$ on $(\tilde t-\epsilon,\tilde t+\epsilon)$. Since $w*\gamma_i\to q$ a.e., it follows that $q(t)\geq 0$ almost everywhere in $(\tilde t_{j-1}-\epsilon,\tilde t_j+\epsilon)$. Thus, we have shown that every  $\tilde t\in(\tilde t_{j-1},\tilde t_j)$ has a neighborhood on which $q(t)\geq0$ a.e. Since a countable number of these neighborhoods cover $(\tilde t_{j-1},\tilde t_j)$, it follows that $q(t)\geq0$ a.e. in $(\tilde t_{j-1},\tilde t_j)$. Thus, we have produced a finite sequence $\{\tilde t_j\}$ such that on each $(\tilde t_{j-1},\tilde t_j)$, either $q(t)\geq0$ a.e. or $q(t)\leq0$ a.e. 
\end{proof}

\begin{lemma}\label{domstep}
Let $q_1,q_2\in L^2(I,\reals)$, and assume that $[q_1],[q_2]\in{\mathcal S}(I,\reals)_{st}$. Then there exist step functions $w_1\in[q_1]$ and $w_2\in[q_2]$ such that $d(w_1,w_2)\leq d(q_1,q_2)$.
\end{lemma}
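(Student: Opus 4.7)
The plan is to apply Lemma \ref{charstep} to each of $q_1,q_2$ to obtain sign-partitions of $I$, pass to their common refinement $0=t_0<t_1<\cdots<t_n=1$, and then modify $q_1$ and $q_2$ on each subinterval $[t_{j-1},t_j]$ to produce step functions $w_1,w_2$ whose $\mathcal{S}$-classes agree with those of $q_1,q_2$ via Lemma \ref{fixup}. On every subinterval of this common refinement, each of $q_1$ and $q_2$ has a definite sign (either $\geq 0$ a.e. or $\leq 0$ a.e.). I will choose each restriction $w_i|_{[t_{j-1},t_j]}$ to be a step function that preserves the sign of $q_i$ and its $L^2$-norm on the subinterval; because these modifications are localized to disjoint subintervals, successive applications of Lemma \ref{fixup} will then give $[w_1]=[q_1]$ and $[w_2]=[q_2]$.

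The construction on $[t_{j-1},t_j]$ splits into two cases. If $q_1$ and $q_2$ share the same sign on this subinterval, I take each $w_i$ to be the constant $\sign(q_i)\sqrt{(1/\Delta_j)\int_{t_{j-1}}^{t_j} q_i^2\,dt}$, where $\Delta_j=t_j-t_{j-1}$. Then $\int_{t_{j-1}}^{t_j} w_1 w_2\,dt$ equals $\|w_1\|_{L^2}\|w_2\|_{L^2}$ on the subinterval, which by Cauchy-Schwarz is at least $\int_{t_{j-1}}^{t_j} q_1 q_2\,dt$; since the $L^2$-norms on the subinterval are preserved, this yields $\int_{t_{j-1}}^{t_j}(w_1-w_2)^2\,dt \leq \int_{t_{j-1}}^{t_j}(q_1-q_2)^2\,dt$. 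If the signs are opposite, constants would only worsen the inner product, so I instead put $w_1$ and $w_2$ on disjoint halves of $[t_{j-1},t_j]$: each is zero on one half and a nonzero constant of the correct sign on the other, with $L^2$-norm on the full subinterval equal to that of the corresponding $q_i$. Then $w_1 w_2\equiv 0$ on the subinterval while $\int q_1 q_2\,dt \leq 0$, so once again the inequality $\int(w_1-w_2)^2\,dt \leq \int(q_1-q_2)^2\,dt$ holds there.

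Summing these inequalities over $j=1,\dots,n$ yields $d(w_1,w_2)\leq d(q_1,q_2)$. The main obstacle is the opposite-sign case: the naive replacement by constants of matching sign actually \emph{increases} the distance, and the fix exploits the fact (built into Lemma \ref{fixup}) that the $\mathcal{S}$-class of a nonnegative function on a subinterval also contains nonnegative functions supported on a proper sub-subinterval. This flexibility lets me choose $w_1$ and $w_2$ with disjoint supports on opposite-sign subintervals and so force $\int w_1 w_2\,dt = 0 \geq \int q_1 q_2\,dt$ there, making the global bound go through.
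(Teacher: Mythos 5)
Your proposal is correct and takes essentially the same route as the paper's proof: apply Lemma \ref{charstep} to both functions, pass to the common refinement, replace by sign-preserving norm-preserving constants on same-sign subintervals, use disjointly-supported step functions on opposite-sign subintervals, invoke Lemma \ref{fixup} one subinterval at a time, and conclude via Cauchy--Schwarz. The only cosmetic difference is that the paper phrases the target inequality as $\langle w_1,w_2\rangle\geq\langle q_1,q_2\rangle$ (equivalent to your subinterval-wise distance bound since the $L^2$-norms are preserved).
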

\begin{proof} It suffices to find step functions $w_1\in[q_1]$ and $w_2\in[q_2]$ such that $\langle w_1,w_2\rangle\geq\langle q_1,q_2\rangle$. By applying Lemma \ref{charstep} to $q_1$ and $q_2$ and then taking the union of our finite $t_i$-sequences, we obtain a single finite sequence $0=t_0<\dots<t_n=1$ such that for each $i=1,\dots,n$, both $q_1$ and $q_2$ have constant sign on $[t_{i-1},t_i]$. 
By ``constant sign", we mean that on this interval either $q_1(t)\geq0$ a.e. or $q_1(t)\leq0$ a.e., and either  $q_2(t)\geq0$ a.e. or $q_2(t)\leq0$ a.e. We now alter $q_1$ and $q_2$ on each of these subintervals in the following way.
\begin{enumerate}
\item If $q_1$ and $q_2$ have the same sign on $[t_{i-1},t_i]$, then on this interval simply replace $q_1$ by the constant function which has the same sign and square integral as $q_1$. Do the same for $q_2$.
\item If $q_1$ and $q_2$ have different signs on $[t_{i-1},t_i]$, then replace $q_1$ on this interval by a function that is constant on the first half of the interval, zero on the second half of the interval, and has the same sign and square integral on $[t_{i-1},t_i]$ as $q_1$ does. Replace $q_2$ on this interval by a function that is zero on the first half of the interval, constant on the second half, and has the same sign and square integral as $q_2$ has on $[t_{i-1},t_i]$.
\end{enumerate}

Call the resulting functions $w_1$ and $w_2$. By performing these replacements one subinterval at a time, we see by Lemma \ref{fixup} that $[w_1]=[q_1]$ and $[w_2]=[q_2]$. Furthermore, by the Cauchy Schwarz inequality, we see that on each subinterval $[t_{i-1},t_i]$ where we performed alteration (1), $\int_{t_{i-1}}^{t_i}w_1(t)w_2(t)dt\geq \int_{t_{i-1}}^{t_i}q_1(t)q_2(t)dt$. For each interval where we performed alteration (2), the same inequality holds, since $\int_{t_{i-1}}^{t_i}w_1(t)w_2(t)dt=0$, while $\int_{t_{i-1}}^{t_i}q_1(t)q_2(t)dt\leq0$. This completes the proof of the lemma. 
\end{proof}

\begin{lemma}\label{squareintlim} Suppose for each $n$, $f_n:I\to \reals$ is an $L^2$ function, and $f:I\to \reals$ is also $L^2$. Suppose that $f_n\to f$ in the $L^2$ norm. Assume that for each $n$, we are given an $a_n$ in $I$ such that $\int_0^{a_n} (f_n)^2=1$ for all $n$. Furthermore, suppose that $a_n\to a$ in $I$. Then $\int_0^a f^2=1$.
\end{lemma}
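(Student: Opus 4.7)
The plan is to show that $\int_0^{a_n} f_n^2 \to \int_0^a f^2$, from which the conclusion follows immediately since the left side equals $1$ for every $n$. I would split the difference via the triangle inequality:
\begin{equation*}
\left|\int_0^{a_n} f_n^2 - \int_0^a f^2\right| \leq \left|\int_0^{a_n}(f_n^2 - f^2)\right| + \left|\int_0^{a_n} f^2 - \int_0^a f^2\right|.
\end{equation*}

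First I would handle the second term. Since $f \in L^2(I,\reals)$, we have $f^2 \in L^1(I,\reals)$, and the function $t \mapsto \int_0^t f^2$ is absolutely continuous (in particular continuous) on $I$. Since $a_n \to a$, this term tends to $0$. Next I would handle the first term by factoring $f_n^2 - f^2 = (f_n - f)(f_n + f)$ and applying Cauchy--Schwarz:
\begin{equation*}
\left|\int_0^{a_n}(f_n - f)(f_n + f)\right| \leq \norm{f_n - f}_{L^2} \norm{f_n + f}_{L^2}.
\end{equation*}
By hypothesis $\norm{f_n - f}_{L^2} \to 0$, while $\norm{f_n + f}_{L^2} \leq \norm{f_n}_{L^2} + \norm{f}_{L^2}$ is bounded because $L^2$-convergent sequences are norm-bounded. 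Hence the first term also tends to $0$.

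Combining these, $\int_0^{a_n} f_n^2 \to \int_0^a f^2$, and since every term on the left equals $1$, the limit must equal $1$ as well. There is no real obstacle here; the only point requiring any care is the bookkeeping to reduce the problem to $L^2$-convergence of $f_n$ and continuity of the indefinite integral of $f^2$, both of which are standard consequences of the hypotheses.
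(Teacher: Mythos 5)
Your proof is correct, and it is a genuinely different (and cleaner) route than the one in the paper. The paper establishes $\int_0^1 |f_n^2 - f^2|\to 0$ by first passing to a subsequence on which $f_n\to f$ almost everywhere, noting $\int_0^1 f_n^2\to\int_0^1 f^2$ via the reverse triangle inequality, and then invoking a generalized dominated convergence theorem with $k_n=|f_n^2-f^2|$, $h_n=f_n^2+f^2$, $h=2f^2$. You replace that entire machinery with the algebraic identity $f_n^2-f^2=(f_n-f)(f_n+f)$ followed by Cauchy--Schwarz, which gives the same $L^1$-smallness of $f_n^2-f^2$ directly from $L^2$-convergence, with no need for a subsequence or for pointwise a.e.\ convergence. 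Both proofs handle the boundary shift $a_n\to a$ identically, via absolute continuity of the indefinite integral of $f^2$. Your approach buys brevity and avoids subsequence bookkeeping; the paper's approach is perhaps more routine for a reader used to DCT-style arguments, but the extra generality of the DCT lemma isn't needed here.
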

\begin{proof} By passing to a subsequence, we can assume that $f_n\to f$ a.e. in $I$. Next, recall a variant of the dominated convergence theorem: if $|k_n|\leq h_n$, $k_n\to k$ a.e., $h_n\to h$ a.e., and $\int_0^1h_n\to \int_0^1h<\infty$, then $\int_0^1k_n\to\int_0^1k$. Since $\left\vert\| f_n\|_2-\|f\|_2\right\vert\leq\|f_n-f\|_2$, it follows that $\int_0^1f_n^2\to\int_0^1f^2$. Taking $k_n=\left|f_n^2-f^2\right|$, $k=0$, $h_n=f_n^2+f^2$, and $h=2f^2$, it follows that $\int_0^1\left|f_n^2-f^2\right|\to 0.$

We now compute 
$$\left\vert\int_0^{a_n}f_n^2-\int_0^af^2\right\vert=\left\vert\int_0^{a_n}(f_n^2-f^2)-\int_{a_n}^af^2\right\vert\leq
\int_0^1|f_n^2-f^2|+\int_{a_n}^af^2.$$
The first of these terms was proved to approach $0$ at the end of the last paragraph; the second approaches $0$ by the absolute continuity of the integral, since $a_n\to a$.   
\end{proof}

\begin{lemma}\label{sameshape} Suppose $q$ and $w$ are elements of $L^2(I,\reals)$, and assume that $0=t_0<t_1<\dots<t_n=1$ is a sequence such that for each $i$ both of the following statements are true:
\begin{itemize}
\item either $q(t)\geq 0$ for $t\in [t_{i-1},t_i]$ a.e. or $q(t)\leq 0$ for $t\in [t_{i-1},t_i]$ a.e. and
\item $\int_{t_{i-1}}^{t_i}q(t)^2dt>0$.
\end{itemize}
Then $w\in[q]$ if and only if there exists a sequence $0=\tilde t_0<\tilde t_1<\dots<\tilde t_n=1$ such that for each $i$, both of the following statements are true:
\begin{itemize}
\item either $q(t)\geq 0$ for $t\in [t_{i-1},t_i]$ a.e. and $w(t)\geq 0$ for $t\in [\tilde t_{i-1},\tilde t_i]$ a.e.  or $q(t)\leq 0$ for $t\in [t_{i-1},t_i]$ a.e. and $w(t)\leq 0$ for $t\in [\tilde t_{i-1},\tilde t_i]$ a.e. and
\item $\int_{\tilde t_{i-1}}^{\tilde t_i}w(t)^2dt=\int_{t_{i-1}}^{t_i}q(t)^2dt$.
\end{itemize}
\end{lemma}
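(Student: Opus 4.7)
My plan is to prove the two directions separately. For the backward implication, given the partition $\tilde t_0<\dots<\tilde t_n$ with matching signs and square integrals on each piece, I would work one subinterval at a time. On $[t_{i-1},t_i]$ and $[\tilde t_{i-1},\tilde t_i]$, after affinely rescaling both to $[0,1]$ and normalizing by $\sqrt{\int_{t_{i-1}}^{t_i}q^2}=\sqrt{\int_{\tilde t_{i-1}}^{\tilde t_i}w^2}>0$, the two restrictions become unit $L^2$ functions of the same sign. Corollary \ref{warpinterval} (applied directly in the nonnegative case, or to the negatives otherwise) then supplies a sequence in $\Gamma$ warping one into the other; un-rescaling yields homeomorphisms $\gamma_{i,n}:[\tilde t_{i-1},\tilde t_i]\to[t_{i-1},t_i]$ for each $i$. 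Concatenating these across $i$ produces $\gamma_n\in\Gamma$ (the concatenation is absolutely continuous on $I$ with positive derivative a.e.\ because each piece is), and summing $L^2$ errors over the finitely many pieces gives $q*\gamma_n\to w$ in $L^2(I)$, so $w\in[q]$.

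For the forward direction, I would follow the template in the second half of the proof of Lemma \ref{charstep}. Choose $\{\gamma_i\}\subset\Gamma$ with $q*\gamma_i\to w$ in $L^2$ and, by passing to a subsequence, also a.e. Each $\gamma_i$ is strictly increasing (absolute continuity together with $\gamma_i'>0$ a.e.\ forces strict monotonicity), so $t_{j,i}=\gamma_i^{-1}(t_j)$ is well-defined, and a diagonal compactness argument lets us assume $t_{j,i}\to\tilde t_j$ for every $j$. What remains is to check that (a) the $\tilde t_j$ are strictly increasing, (b) $w$ has the correct sign on each $(\tilde t_{j-1},\tilde t_j)$, and (c) $\int_{\tilde t_{j-1}}^{\tilde t_j}w^2=\int_{t_{j-1}}^{t_j}q^2$.

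I expect (c) to be the main technical step; once it is in hand, (a) and (b) follow quickly. Change of variables on the absolutely continuous $\gamma_i$ gives $\int_{t_{j-1,i}}^{t_{j,i}}(q*\gamma_i)^2=\int_{t_{j-1}}^{t_j}q^2$ for every $i$, and I would pass to the limit by a direct extension of Lemma \ref{squareintlim} from $\int_0^{a_n}$ to $\int_{a_n}^{b_n}$ with both endpoints varying --- dominated convergence combined with absolute continuity of the integral still works. This yields (c). Then (a) is immediate: if $\tilde t_{j-1}=\tilde t_j$ for some $j$, the left side of (c) vanishes while the right side is positive by hypothesis. With (a) in hand, (b) follows verbatim from the neighborhood-trapping and countable-cover argument in Lemma \ref{charstep}, since every $\tilde t\in(\tilde t_{j-1},\tilde t_j)$ lies eventually inside $(t_{j-1,i},t_{j,i})$, on which $q*\gamma_i$ inherits the sign of $q$ on $[t_{j-1},t_j]$, which transfers to $w$ via a.e.\ convergence. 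The positivity hypothesis $\int_{t_{j-1}}^{t_j}q^2>0$ is essential precisely to enable (a): without it the forward direction could produce a degenerate partition of $w$ on which the sign condition in (b) would be vacuous.
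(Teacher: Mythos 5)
Your proposal is correct and follows essentially the same route the paper indicates (it declares the proof ``basically the same as the proof of Lemma~\ref{charstep}, but uses Lemma~\ref{squareintlim} to keep track of the square integrals''), with the backward direction handled via Corollary~\ref{warpinterval} on each subinterval (the sign and equal-square-integral hypotheses are exactly what the rescaling requires) and the forward direction handled by the subsequence/compactness argument of Lemma~\ref{charstep}, with the extension of Lemma~\ref{squareintlim} supplying the square-integral equalities, which in turn force the $\tilde t_j$ to be strictly increasing.
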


\begin{proof}  We omit the proof; it is basically the same as the proof of Lemma \ref{charstep}, but uses Lemma \ref{squareintlim} to keep track of the square integrals. 
\end{proof}

\begin{lemma}\label{zerospeedinorbits} Let $q\in L^2(I,\reals)$. Then $q\tilde\Gamma\subset[q]$.
\end{lemma}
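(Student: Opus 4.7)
The plan is to produce, for each $\gamma\in\tilde\Gamma$, an explicit approximating sequence $\{\gamma_n\}\subset\Gamma$ such that $q*\gamma_n\to q*\gamma$ in $L^2(I,\reals)$. The obvious candidate is the convex combination
$$\gamma_n(t)=\left(1-\tfrac{1}{n}\right)\gamma(t)+\tfrac{1}{n}t,$$
which is absolutely continuous with $\gamma_n(0)=0$, $\gamma_n(1)=1$, and $\gamma_n'(t)=(1-\tfrac{1}{n})\gamma'(t)+\tfrac{1}{n}\geq\tfrac{1}{n}>0$ a.e., so $\gamma_n\in\Gamma$. By construction $\gamma_n\to\gamma$ uniformly and $\gamma_n'\to\gamma'$ pointwise a.e., with the $L^1$-domination $\gamma_n'\leq\gamma'+1$.

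The next step is to upgrade this to $\sqrt{\gamma_n'}\to\sqrt{\gamma'}$ in $L^2(I,\reals)$. Since $\|\sqrt{\gamma_n'}\|_2^2=\gamma_n(1)-\gamma_n(0)=1=\|\sqrt{\gamma'}\|_2^2$, expanding $\|\sqrt{\gamma_n'}-\sqrt{\gamma'}\|_2^2$ reduces matters to showing $\int_0^1\sqrt{\gamma_n'\gamma'}\,dt\to 1$, which follows from dominated convergence applied to $\sqrt{\gamma'}\cdot\sqrt{(1-1/n)\gamma'+1/n}$ with integrable dominator $\sqrt{\gamma'}(\sqrt{\gamma'}+1)$.

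Once that is in hand, I reduce the general $L^2$ statement to the case of continuous $q$. Given $\epsilon>0$, pick a continuous $\tilde q$ with $\|q-\tilde q\|_2<\epsilon$. Because every $\gamma\in\tilde\Gamma$ acts by an $L^2$-isometry (a fact already established via integration by substitution), both $\|q*\gamma_n-\tilde q*\gamma_n\|_2$ and $\|q*\gamma-\tilde q*\gamma\|_2$ are at most $\epsilon$. Thus it suffices to prove $\tilde q*\gamma_n\to\tilde q*\gamma$ in $L^2$ for continuous $\tilde q$. Split
$$\tilde q*\gamma_n-\tilde q*\gamma=\tilde q(\gamma_n)\bigl(\sqrt{\gamma_n'}-\sqrt{\gamma'}\bigr)+\bigl(\tilde q(\gamma_n)-\tilde q(\gamma)\bigr)\sqrt{\gamma'}.$$
Boundedness of $\tilde q$ by some $M$ makes the first summand's $L^2$-norm at most $M\|\sqrt{\gamma_n'}-\sqrt{\gamma'}\|_2\to 0$. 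For the second summand, uniform continuity of $\tilde q$ and uniform convergence $\gamma_n\to\gamma$ give $\tilde q(\gamma_n)\to\tilde q(\gamma)$ pointwise, and the integrand $(\tilde q(\gamma_n)-\tilde q(\gamma))^2\gamma'$ is dominated by $4M^2\gamma'\in L^1$, so dominated convergence finishes it.

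The main obstacle is the second step—getting $L^2$ (not just pointwise or $L^1$) convergence of the square roots of the derivatives, because weaker modes of convergence would not suffice to control $q\circ\gamma$ against $\sqrt{\gamma_n'}$ uniformly in $n$. Once the explicit computation $\int\sqrt{\gamma_n'\gamma'}\to 1$ is carried out, the rest of the argument is a standard density-plus-isometry reduction.
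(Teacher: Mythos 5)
Your proof is correct, and it takes a genuinely different route from the paper's. The paper first establishes the lemma for step functions by invoking Lemma \ref{sameshape} (itself only sketched, and resting on the machinery of Lemmas \ref{charstep} and \ref{squareintlim}), then passes to general $q\in L^2$ by a density-plus-isometry triangle argument in which the approximating $\gamma\in\Gamma$ depends on $\epsilon$ and the chosen step function. You instead write down a single explicit sequence $\gamma_n=(1-\tfrac1n)\gamma+\tfrac1n t$ in $\Gamma$, show $\sqrt{\gamma_n'}\to\sqrt{\gamma'}$ in $L^2$ by the clean observation that all the $L^2$ norms equal $1$ so one only needs $\int\sqrt{\gamma_n'\gamma'}\to1$ (dominated convergence with dominator $\gamma'+\sqrt{\gamma'}\in L^1$), and then reduce to \emph{continuous} $q$ rather than step functions, handling the continuous case by the two-term decomposition $\tilde q(\gamma_n)(\sqrt{\gamma_n'}-\sqrt{\gamma'})+(\tilde q(\gamma_n)-\tilde q(\gamma))\sqrt{\gamma'}$ with Cauchy--Schwarz and dominated convergence. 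Your approach is more self-contained and elementary — it bypasses the orbit-characterization lemmas entirely and gives a concrete, $\epsilon$-independent approximating sequence — while the paper's approach fits into the larger arc of Section~3, where the step-function machinery is reused repeatedly.
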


\begin{proof} We will prove this first for step functions, and then extend by density to all of $S$, so we start by assuming that $q$ is a step function, and let $\tilde\gamma\in\tilde\Gamma$. Let $0=t_0<t_1<\dots<t_n=1$ be the finite set of points at which $q(t)$ changes values. For each $i$, choose $\tilde t_i\in I$ such that $\tilde\gamma(\tilde t_i)=t_i$. Letting $w(t)=q*\tilde\gamma(t)$, it follows from integration by substitution that the hypotheses of Lemma \ref{sameshape} are satisfied. Hence, $w\in[q]$, which completes the proof for $q$ a step function.

Now, let $q\in L^2(I,\reals)$ be arbitrary, and let $\tilde\gamma\in\tilde\Gamma$. Let $\epsilon>0$ be given. By density, choose a step function $v\in U(I,\reals)$ such that $d(q,v)<\epsilon/3$. By the previous paragraph, we know that $v*\tilde\gamma\in[v]$; this means we can choose $\gamma\in\Gamma$ such that $d(v*\gamma,v*\tilde\gamma)<\epsilon/3$. By the triangle inequality, and the fact the $\tilde\Gamma$ acts by isometries, we then conclude that
\begin{equation*}
d(q*\tilde\gamma,q*\gamma)\leq d(q*\tilde\gamma,v*\tilde\gamma)+d(v*\tilde\gamma,v*\gamma) +d(v*\gamma,q*\gamma)<\epsilon,
\end{equation*}
which completes the proof of the current lemma.  
\end{proof}

Define a function $w\in U(I,\reals)$ to be in {\it standard form} if it is of the form described in Lemma 
\ref{geod1}, i.e.,
\begin{equation*}
w(t)=
\begin{cases}
1& \text{for $t\in A$}
\\
-1& \text{for $t\in B$}
\end{cases}
\end{equation*} 
where $I=A\cup B$ is a partition of $I$ into two disjoint measurable sets. 

\begin{lemma}\label{differentfunc}
Suppose $q$ and $w$ are both in standard form, and $q\neq w$ in $L^2$ (i.e., the set $\{t\in I:q(t)\neq w(t)\}$ has measure greater than $0$). Then $w\not\in[q]$.
\end{lemma}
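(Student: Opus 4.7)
The plan is to argue by contradiction. Suppose $w\in[q]$; then $w$ lies in the $L^2$-closure of $q\Gamma$, so there is a sequence $\{\gamma_n\}\subset\Gamma$ with $q*\gamma_n\to w$ in $L^2(I,\reals)$. I will extract two independent consequences from this single convergence: first that $\gamma_n\to\mathrm{id}$ uniformly on $I$, and second that $F_q\circ\gamma_n\to F_w$ uniformly, where $F_q(t):=\int_0^t q(s)\,ds$ and $F_w(t):=\int_0^t w(s)\,ds$ are the underlying $AC_0$-primitives $Q^{-1}(q)$ and $Q^{-1}(w)$. Since $F_q$ is $1$-Lipschitz (its derivative has absolute value $1$), the first fact upgrades the second to $F_q\circ\gamma_n\to F_q$ uniformly, forcing $F_w=F_q$ on $I$; differentiating then yields $w=q$ almost everywhere, contradicting the hypothesis that $q\ne w$ in $L^2$.

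For the first consequence I would pass from $L^2$-convergence of $q*\gamma_n\to w$ to $L^1$-convergence of their pointwise squares, via the identity $a^2-b^2=(a-b)(a+b)$ and Cauchy--Schwarz. Here the standard-form hypothesis enters essentially: since $|q|\equiv 1$ and $|w|\equiv 1$ a.e., one has $|(q*\gamma_n)(t)|^2=q(\gamma_n(t))^2\gamma_n'(t)=\gamma_n'(t)$ a.e.\ and $|w|^2\equiv 1$, so $\gamma_n'\to 1$ in $L^1(I)$. Integrating, $\gamma_n(t)-t=\int_0^t(\gamma_n'(s)-1)\,ds\to 0$ uniformly in $t$.

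For the second consequence, a direct substitution (valid because each $\gamma_n$ is absolutely continuous, as in Section 2) yields $Q^{-1}(q*\gamma_n)(t)=\int_0^t q(\gamma_n(s))\gamma_n'(s)\,ds=F_q(\gamma_n(t))$, again using $|q|\equiv 1$. A second Cauchy--Schwarz estimate shows that $f\mapsto f|f|$ is continuous from $L^2(I,\reals)$ to $L^1(I,\reals)$; integrating on $[0,t]$ therefore makes $Q^{-1}\colon L^2\to C(I,\reals)$ continuous into the uniform norm on the target. Applied to $q*\gamma_n\to w$, this gives $F_q\circ\gamma_n\to F_w$ uniformly, completing the setup for the final Lipschitz comparison above.

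The main obstacle I anticipate is the first step; the standard-form hypothesis enters the argument at precisely that point, because the identity $|q*\gamma_n|^2=\gamma_n'$ is what lets us convert $L^2$-convergence of $q*\gamma_n$ directly into $L^1$-control of $\gamma_n'$. Without it we would only know $\int_0^1\gamma_n'=1$ and would have no way to force $\gamma_n\to\mathrm{id}$.
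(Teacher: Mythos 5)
Your proof is correct, and it takes a genuinely different route from the paper's. The paper proves $w\notin[q]$ by producing an explicit $\gamma$-independent upper bound strictly below $1$ for $\langle q*\gamma,w\rangle$: it writes $q=1_A-1_B$, $w=1_C-1_D$, splits into the cases $\mu(A)\neq\mu(C)$ and $\mu(A)=\mu(C)$, and in each case uses Cauchy--Schwarz over suitable intersections together with the elementary concavity fact about $p(t)=\sqrt{ta}+\sqrt{(U-t)(U-a)}$. That argument is quantitative (it yields a positive lower bound for $d([q],[w])$ uniformly over $\Gamma$) but is somewhat bulky. Your argument instead assumes $q*\gamma_n\to w$ and extracts two structural consequences: (i) $|q*\gamma_n|^2=\gamma_n'$ a.e.\ (which uses the standard form plus, implicitly, the Luzin~N property of $\gamma_n^{-1}$ for $\gamma_n\in\Gamma$ to ensure $q(\gamma_n(t))^2=1$ a.e.), so $L^2$-convergence of $q*\gamma_n$ to the unit-modulus $w$ forces $\gamma_n'\to 1$ in $L^1$ and hence $\gamma_n\to\mathrm{id}$ uniformly; and (ii) continuity of $Q^{-1}\colon L^2\to C(I)$ (via the pointwise bound $\lvert a|a|-b|b|\rvert\le(|a|+|b|)|a-b|$ and Cauchy--Schwarz), so the primitives converge uniformly, $F_q\circ\gamma_n\to F_w$. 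Combined with the $1$-Lipschitz property of $F_q$, (i) upgrades (ii) to $F_q=F_w$, hence $q=w$ a.e., contradiction. This is cleaner and more conceptual, and the intermediate fact you isolate --- that for unit-modulus step-in $q,w$, $q*\gamma_n\to w$ forces $\gamma_n\to\mathrm{id}$ uniformly --- is essentially a specialization of the paper's later Lemma~\ref{paramapproach} and could be reused. The one thing the paper's proof buys that yours does not is the explicit strict bound on the supremum of $\langle q*\gamma,w\rangle$, i.e.\ a concrete positive lower bound for $d([q],[w])$; your argument is purely qualitative. If you wanted to make your write-up airtight you should record explicitly that $\gamma_n^{-1}$ being absolutely continuous is what guarantees $q(\gamma_n(\cdot))^2=1$ a.e., but that is a footnote-level point, not a gap.
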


\begin{proof} First we make a simple calculation. Let $0<a<U$, and define $p:[0,U]\to \reals$ by 
$p(t)=\sqrt {ta}+\sqrt{(U-t)(U-a)}$. Then $p(t)$ has a unique maximum at $p(a)=U$; in fact $p'(t)>0$ for $t<a$ and $p'(t)<0$ for $t>a$. This is an easy Calc I exercise!

Since $q$ and $w$ are both in standard form, we have two partitions $I=A\cup B$ and $I=C\cup D$ such that 
\begin{equation*}
q(t)=
\begin{cases}
1& \text{for $t\in A$}
\\
-1& \text{for $t\in B$}
\end{cases}
\end{equation*} 
and
\begin{equation*}
w(t)=
\begin{cases}
1& \text{for $t\in C$}
\\
-1& \text{for $t\in D$}.
\end{cases}
\end{equation*}
Let $\mu$ denote Lebesgue measure. The remainder of the proof will consist of considering the two cases  $\mu(A)\neq\mu(C)$ and $\mu(A)=\mu(C)$.

{\bf Case 1:} Assume $\mu(A)\neq\mu(C)$. Suppose $\gamma\in\Gamma$, and let $\tilde A=\gamma^{-1}(A)$ and $\tilde B=\gamma^{-1}(B)$. Using integration by substitution, we see that $\int_{\tilde A}(q*\gamma(t))^2dt=\int_A (q(t))^2 dt =\mu(A)$ and $\int_{\tilde B}(q*\gamma(t))^2dt=\int_B (q(t))^2 dt =\mu(B)$. Now compute:
\begin{equation*}
\int_0^1 q*\gamma(t)w(t) dt=\int_{\tilde A\cap C}q*\gamma(t)w(t) dt +\int_{\tilde A\cap D}q*\gamma(t)w(t) dt +\int_{\tilde B\cap C}q*\gamma(t)w(t) dt + \int_{\tilde B\cap D}q*\gamma(t)w(t) dt
\end{equation*}
\begin{equation*}
\leq\int_{\tilde A\cap C}q*\gamma(t)w(t) dt+\int_{\tilde B\cap D}q*\gamma(t)w(t) dt
\end{equation*}
since on $\tilde A\cap D$ and $\tilde B\cap C$, $q*\gamma$ and $w$ have opposite signs, so these two terms make a negative contribution to the integral. However, by the Cauchy Scharz inequality, 
\begin{equation*}
\int_{\tilde A\cap C}q*\gamma(t)w(t)dt \leq \sqrt{\int_{\tilde A\cap C}(q*\gamma(t))^2 dt} \sqrt{ \int_{\tilde A\cap C}(w(t))^2 dt} 
\end{equation*}
\begin{equation*}
\leq \sqrt{\int_{\tilde A}(q*\gamma(t))^2 dt} \sqrt{ \int_{C}(w(t))^2 dt}=\sqrt{\mu(A)\mu(C)}.
\end{equation*}
Similarly,
\begin{equation*}
\int_{\tilde B\cap D}q*\gamma(t)w(t) dt\leq\sqrt{\mu(B)\mu(D)}.
\end{equation*}
Combining these with the last inequality gives
\begin{equation*}
\int_0^1 q*\gamma(t)w(t) dt\leq \sqrt{\mu(A)\mu(C)}+\sqrt{\mu(B)\mu(D)}
=\sqrt{\mu(A)\mu(C)}+\sqrt{(1-\mu(A))(1-\mu(C))}
\end{equation*}
Since we are assuming here that $\mu(A)\neq\mu(C)$, it follows from the calculation we made at the beginning of this proof that this upper bound is strictly less than 1. Also, this upper bound is independent of which element $\gamma\in\Gamma$ we chose. Since $\langle q*\gamma,w\rangle$ has an upper bound that is strictly less than 1 on the orbit $q\Gamma$, it follows that $d(q*\gamma, w)$ has a lower bound that is greater than zero on this orbit. This finishes the proof in Case 1; we have shown that if $\mu(A)\neq\mu(C)$, then $w\not\in[q]$. 

{\bf Case 2:} Assume $\mu(A)=\mu(C)=U$ (so $\mu(B)=\mu(D)=1-U$).

Let $f,g\in AC_0$ be the absolutely continuous functions satisfying $Q(f)=q$ and $Q(g)=w$. Since $Q$ is a bijection, $f\neq g$, so there exists $z\in I$ such that $f(z)\neq g(z)$. By definition of $Q$, $f(z)=\mu(A\cap[0,z])-\mu(B\cap[0,z])$ and $g(z)=\mu(C\cap[0,z])-\mu(D\cap[0,z])$. Since $z=\mu(A\cap[0,z])+\mu(B\cap[0,z])=\mu(C\cap[0,z])+\mu(D\cap[0,z])$, we may conclude that $\mu(A\cap[0,z])\neq\mu(C\cap[0,z])$; without loss of generality, let's assume $\mu(A\cap[0,z])<\mu(C\cap[0,z])$ and hence $\mu(B\cap[0,z])>\mu(D\cap[0,z])$.


Let $\gamma\in\Gamma$ be arbitrary. 



Compute:
\begin{equation*}
\langle q*\gamma,w\rangle=\int_0^1q*\gamma(t)w(t)dt=\int_0^zq*\gamma(t)w(t)dt+
\int_z^1q*\gamma(t)w(t)dt
\end{equation*}
\begin{equation*}
\leq\int_{[0,z]\cap(\tilde A\cap C)}q*\gamma(t)w(t)dt+\int_{[0,z]\cap(\tilde B\cap D)}q*\gamma(t)w(t)dt
\end{equation*}
\begin{equation*}
+\int_{[z,1]\cap(\tilde A\cap C)}q*\gamma(t)w(t)dt+\int_{[z,1]\cap(\tilde B\cap D)}q*\gamma(t)w(t)dt
\end{equation*}
where this inequality follows because on the parts of the interval we left out, the contribution of the integrand is negative. Continuing by the Cauchy Schwarz inequality:
\begin{equation*}
\leq\sqrt{\int_{[0,z]\cap(\tilde A\cap C)}(q*\gamma(t))^2dt}\sqrt{\int_{[0,z]\cap(\tilde A\cap C)}w(t)^2dt}
+\sqrt{\int_{[0,z]\cap(\tilde B\cap D)}(q*\gamma(t))^2dt}\sqrt{\int_{[0,z]\cap(\tilde B\cap D)}w(t)^2dt}
\end{equation*}
\begin{equation*}
+\sqrt{\int_{[z,1]\cap(\tilde A\cap C)}(q*\gamma(t))^2dt}\sqrt{\int_{[z,1]\cap(\tilde A\cap C)}w(t)^2dt}
+\sqrt{\int_{[z,1]\cap(\tilde B\cap D)}(q*\gamma(t))^2dt}\sqrt{\int_{[z,1]\cap(\tilde B\cap D)}w(t)^2dt}
\end{equation*}
\begin{equation*}
\leq\sqrt{\int_{[0,z]\cap\tilde A}(q*\gamma(t))^2dt}\sqrt{\int_{[0,z]\cap C}w(t)^2dt}
+\sqrt{\int_{[0,z]\cap\tilde B)}(q*\gamma(t))^2dt}\sqrt{\int_{[0,z]\cap D}w(t)^2dt}
\end{equation*}
\begin{equation*}
+\sqrt{\int_{[z,1]\cap\tilde A}(q*\gamma(t))^2dt}\sqrt{\int_{[z,1]\cap C}w(t)^2dt}
+\sqrt{\int_{[z,1]\cap\tilde B}(q*\gamma(t))^2dt}\sqrt{\int_{[z,1]\cap  D}w(t)^2dt}
\end{equation*}
\begin{equation*}
=\sqrt{\int_{[0,\gamma(z)]\cap A}q(t)^2dt}\sqrt{\int_{[0,z]\cap C}w(t)^2dt}
+\sqrt{\int_{[0,\gamma(z)]\cap B)}q(t)^2dt}\sqrt{\int_{[0,z]\cap D}w(t)^2dt}
\end{equation*}
\begin{equation*}
+\sqrt{\int_{[\gamma(z),1]\cap A}q(t)^2dt}\sqrt{\int_{[z,1]\cap C}w(t)^2dt}
+\sqrt{\int_{[\gamma(z),1]\cap B}q(t)^2dt}\sqrt{\int_{[z,1]\cap  D}w(t)^2dt}
\end{equation*}
\begin{equation*}
=\sqrt{\mu([0,\gamma(z)]\cap A)\mu([0,z]\cap C)}+\sqrt{\mu([0,\gamma(z)]\cap B)\mu([0,z]\cap D)}
\end{equation*}
\begin{equation*}
+\sqrt{\mu([\gamma(z),1]\cap A)\mu([z,1]\cap C)}+\sqrt{\mu([\gamma(z),1]\cap B)\mu([z,1]\cap D)}
\end{equation*}
\begin{equation*}
=\sqrt{\mu([0,\gamma(z)]\cap A)\mu([0,z]\cap C)}+\sqrt{\mu([0,\gamma(z)]\cap B)\mu([0,z]\cap D)}
\end{equation*}
\begin{equation*}
+\sqrt{(U-\mu([0,\gamma(z)]\cap A))(U-\mu([0,z]\cap C))}+\sqrt{(1-U-\mu([0,\gamma(z)]\cap B))(1-U-\mu([0,z]\cap D))}.
\end{equation*}
Reversing the order of the middle two terms gives
\begin{equation}\label{bound}
=\sqrt{\mu([0,\gamma(z)]\cap A)\mu([0,z]\cap C)}+\sqrt{(U-\mu([0,\gamma(z)]\cap A))(U-\mu([0,z]\cap C))}
\end{equation}
\begin{equation*}
+\sqrt{\mu([0,\gamma(z)]\cap B)\mu([0,z]\cap D)}
+\sqrt{(1-U-\mu([0,\gamma(z)]\cap B))(1-U-\mu([0,z]\cap D))}.
\end{equation*}
Now there are two possibilities to consider: Either $\gamma(z)\leq z$ or $\gamma(z)\geq z$.

{\bf Case (i):} Assume $\gamma(z)\leq z$. In this case $\mu([0,\gamma(z)]\cap A)\leq\mu([0,z]\cap A)<\mu([0,z]\cap C)$. 

By the observations about $p(t)$ and $p'(t)$ made in the first paragraph of this proof, we conclude that the sum of the first two summands of expression \ref{bound} is bounded above by
\begin{equation*}
\sqrt{\mu([0,z]\cap A)\mu([0,z]\cap C)}+\sqrt{(U-\mu([0,z]\cap A))(U-\mu([0,z]\cap C))}
\end{equation*}
and that this bound is strictly less than $U$. Also, by the first paragraph of this proof, the sum of the third and fourth summands of expression \ref{bound} is bounded above by $1-U$. As a result, in Case (i), we have an upper bound for $\langle q*\gamma,w\rangle$ which is strictly less than 1, and is independent of $\gamma\in\Gamma$ (except for the condition that $\gamma(z)\leq z$). 

{\bf Case (ii):} Assume $\gamma(z)\geq z$. In that case, $\mu([0,\gamma(z)]\cap B)\geq\mu([0,z]\cap B)>\mu([0,z]\cap D)$.  Then, by the observations about $p(t)$ and $p'(t)$ made in the first paragraph of this proof, we conclude that the sum of the third and fourth summands of expression \ref{bound} is bounded above by 
\begin{equation*}
\sqrt{\mu([0,z]\cap B)\mu([0,z]\cap D)}
+\sqrt{(1-U-\mu([0,z]\cap B))(1-U-\mu([0,z]\cap D))}.
\end{equation*}
and that this bound is strictly less than $1-U$. Also, by the first paragraph of this proof, the sum of the first two summands of expression \ref{bound} is bounded above by $U$. Hence, in Case (ii), we also have an upper bound for $\langle q*\gamma,w\rangle$ which is strictly less than 1, and is independent of $\gamma\in\Gamma$ (except for the condition that $\gamma(z)\geq z$).

Taking the greater of these two upper bounds, we have an upper bound for $\langle q*\gamma,w\rangle$ that is strictly less than 1 and is completely independent of the choice of $\gamma\in\Gamma$. As a result, we have a lower bound for $d(q*\gamma,w)$ that is strictly greater than 0 and is completely independent of $\gamma\in\Gamma$. This proves that $w\not\in[q]$, and completes the proof of Lemma \ref{differentfunc}. \end{proof}

Let ${\mathcal SF}(I,\reals)=\{q\in U(I,\reals): q \hbox{ is in standard form.}\}$ By the unique arclength parametrization theorem quoted earlier (Theorem \ref{arclengthpar}), we can express $U(I,\reals)$ as a disjoint union as follows:
$$U(I,\reals)=\coprod_{w\in{\mathcal SF}}w\tilde\Gamma$$
By Lemma \ref{zerospeedinorbits}, $q\tilde\Gamma\in[q]$ for each $q\in{\mathcal SF}$. By Lemma \ref{differentfunc}, if $q,w\in{\mathcal SF}$ and $q\neq w$, then $q\not\in[w]$. It follows that $[q]\cap[w]=\emptyset$ and therefore $q\tilde\Gamma \cap[w]=\emptyset$. By the disjoint union above, since $[w]\cap q\tilde\Gamma=\emptyset$ for all $q\in{\mathcal SF}$ where $q\neq w$, it follows that $[w]\subset w\tilde\Gamma$. Combined with Lemma \ref{zerospeedinorbits}, this proves the following theorem.
\begin{thm}\label{orbitstructure}
For all $w\in{\mathcal SF}$, $[w]=w\tilde\Gamma$.
\end{thm}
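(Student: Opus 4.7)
The plan is to prove the two inclusions $w\tilde\Gamma \subseteq [w]$ and $[w] \subseteq w\tilde\Gamma$ separately, stringing together the lemmas already proved in this section.

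The first inclusion $w\tilde\Gamma \subseteq [w]$ is immediate from Lemma \ref{zerospeedinorbits} applied to $q = w$, so no work is needed there. For the reverse inclusion, the main tool is the disjoint-union decomposition $U(I,\reals) = \coprod_{v \in \mathcal{SF}} v\tilde\Gamma$ stated just before the theorem. I would first confirm this decomposition: Theorem \ref{arclengthpar} writes any $q \in U(I,\reals)$ uniquely as $v * \gamma$ with $|v|$ constant a.e. and $\gamma \in \tilde\Gamma$; since $\tilde\Gamma$ acts by isometries, $\|v\|_{L^2} = \|q\|_{L^2} = 1$, which in the scalar case forces $|v| = 1$ a.e., so $v \in \mathcal{SF}$. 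Uniqueness of the factorization is exactly the disjointness of the union.

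Now for $[w] \subseteq w\tilde\Gamma$: fix $q \in [w]$ and use the decomposition to write $q \in v\tilde\Gamma$ for a unique $v \in \mathcal{SF}$. By Lemma \ref{zerospeedinorbits}, $v\tilde\Gamma \subseteq [v]$, so $q \in [v]$, and hence the equivalence classes $[q] = [v]$ coincide. Since we also have $q \in [w]$, this forces $[v] = [w]$, and in particular $v \in [w]$. If $v \neq w$ as $L^2$-functions, then by Lemma \ref{differentfunc} we would have $v \notin [w]$, a contradiction. Therefore $v = w$ and $q \in w\tilde\Gamma$, as required.

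Summarizing the order of steps: (1) invoke Lemma \ref{zerospeedinorbits} for one containment; (2) justify the disjoint-union decomposition of $U(I,\reals)$ from Theorem \ref{arclengthpar} by pinning down the norm of the constant-magnitude factor; (3) given $q \in [w]$, extract its unique standard-form representative $v$; (4) use Lemma \ref{differentfunc} to force $v = w$. There is no real analytic obstacle here: all of the hard Cauchy--Schwarz estimates have already been absorbed into Lemma \ref{differentfunc}, and the existence of the standard-form factorization is absorbed into Theorem \ref{arclengthpar}. What remains is essentially bookkeeping that orbits indexed by distinct elements of $\mathcal{SF}$ cannot have the same closure.
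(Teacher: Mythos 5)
Your proposal is correct and follows essentially the same route as the paper: both derive the theorem from the disjoint-union decomposition $U(I,\reals)=\coprod_{v\in\mathcal{SF}}v\tilde\Gamma$ (via Theorem \ref{arclengthpar}), Lemma \ref{zerospeedinorbits} for one inclusion, and Lemma \ref{differentfunc} to rule out $q$ landing in $v\tilde\Gamma$ for any $v\neq w$. The only difference is presentational — you trace a single $q\in[w]$ to its standard-form representative and force $v=w$, whereas the paper argues that $[w]$ is disjoint from every $v\tilde\Gamma$ with $v\neq w$ and concludes from the partition.
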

\begin{corollary}\label{generalorbitstructure}
If $q\in L^2(I,\reals)$, and $q^{-1}(0)$ has measure $0$, then $[q]=q\tilde\Gamma$.
\end{corollary}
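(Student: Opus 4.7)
My plan is to reduce the statement to the standard-form case already handled by Theorem~\ref{orbitstructure}. First I would apply Theorem~\ref{arclengthpar} to write $q=w*\gamma$ uniquely, with $\gamma\in\tilde\Gamma$ and $|w(t)|$ equal to some constant $c$ almost everywhere. Since the hypothesis rules out $q\equiv 0$, we must have $c>0$; then $w_{0}:=w/c$ satisfies $|w_{0}|=1$ a.e., which in the scalar case means $w_{0}$ takes values in $\{\pm 1\}$ a.e., so $w_{0}\in{\mathcal{SF}}$. Thus $q=c\,w_{0}*\gamma$.

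The crucial step is to convert the hypothesis $\mu(q^{-1}(0))=0$ into the conclusion $\gamma\in\Gamma$ (i.e., $\gamma'>0$ a.e.). Since $q(t)=w(\gamma(t))\sqrt{\gamma'(t)}$, the set $\{q=0\}$ decomposes as
\[
\{q=0\}=\bigl(\{w\circ\gamma=0\}\cap\{\gamma'>0\}\bigr)\cup\{\gamma'=0\},
\]
and on the other hand $\{\gamma'=0\}\subset\{q=0\}$. By the change-of-variables formula applied to the indicator of the null set $\{w=0\}$, we get $\int_{I}\mathbf{1}_{\{w=0\}}(\gamma(t))\gamma'(t)\,dt=0$, so $\{w\circ\gamma=0\}\cap\{\gamma'>0\}$ is null. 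Consequently $\mu(\{q=0\})=\mu(\{\gamma'=0\})$, and the hypothesis forces $\gamma'>0$ a.e., i.e.\ $\gamma\in\Gamma$.

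With $\gamma\in\Gamma$ in hand, both halves of the equality fall out. For the semigroup orbit, associativity of the action gives $q\tilde\Gamma=(c\,w_{0}*\gamma)\tilde\Gamma=c\,w_{0}*(\gamma\circ\tilde\Gamma)$; but $\gamma\in\Gamma$ has an inverse in $\Gamma\subset\tilde\Gamma$, so $\gamma\circ\tilde\Gamma=\tilde\Gamma$ and hence $q\tilde\Gamma=c\,w_{0}\tilde\Gamma$. For the closed-up group orbit, the same invertibility yields $q\Gamma=c\,w_{0}\Gamma$, and since scalar multiplication by $c>0$ is a homeomorphism of $L^{2}(I,\reals)$,
\[
[q]=\overline{q\Gamma}=c\,\overline{w_{0}\Gamma}=c\,[w_{0}].
\]
Applying Theorem~\ref{orbitstructure} to $w_{0}\in{\mathcal{SF}}$ gives $[w_{0}]=w_{0}\tilde\Gamma$, so $[q]=c\,w_{0}\tilde\Gamma=q\tilde\Gamma$, as required.

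I expect the only real obstacle to be the measure-theoretic bookkeeping in the middle step — verifying that $\{q=0\}$ and $\{\gamma'=0\}$ differ only by a null set requires the change-of-variables formula (which we have, since $\gamma$ is absolutely continuous) and the observation that $|w|=c>0$ a.e.\ is the right way to exploit the standard-form structure. The rest of the argument is purely formal manipulation with the semigroup action, combined with the linearity of scalar rescaling, and leans entirely on Theorem~\ref{orbitstructure} for the essential content.
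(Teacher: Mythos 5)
Your proof is correct and follows essentially the same route as the paper's: reduce to the standard-form case via Theorem~\ref{arclengthpar}, show the hypothesis forces $\gamma\in\Gamma$ so that $\gamma^{-1}\tilde\Gamma=\tilde\Gamma$, and invoke Theorem~\ref{orbitstructure}. The only substantive difference is that you spell out the measure-theoretic reason that $\mu(\{q=0\})=0$ forces $\mu(\{\gamma'=0\})=0$ (via change of variables on $\mathbf{1}_{\{w=0\}}$), which the paper simply asserts; your version also folds the scalar-rescaling step in from the start rather than first restricting to $U(I,\reals)$.
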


\begin{proof} First, assume that $q\in U(I,\reals)$. By Theorem \ref{arclengthpar}, we can write $q=w*\gamma=(w\circ\gamma)\sqrt{\gamma'}$, where $\gamma\in\tilde\Gamma$ and $|w(t)|=1$ for almost all $t\in I$. Since $q^{-1}(0)$ has measure $0$, it follows that $\{t\in I:\gamma'(t)=0\}$ has measure $0$, so $\gamma\in\Gamma$. Hence, we can write $w=q*\gamma^{-1}$ and, therefore, $[q]=[w]=w\tilde\Gamma=q\gamma^{-1}\tilde\Gamma=q\tilde\Gamma$, since $\gamma^{-1}\tilde\Gamma=\tilde\Gamma$. For $q\not\in U(I,\reals)$, the result also follows, since multiplication by a constant nonzero scalar is a homeomorphism that commutes with the action of $\tilde\Gamma$.

\end{proof}

\section{Orbit Structure in ${\mathcal S}(I,\reals^N)$}

In this section, we will extend Theorem \ref{orbitstructure} and Corollary \ref{generalorbitstructure} from $L^2(I,\reals)$ to $L^2(I,\reals^N)$. This has a few more technical difficulties than one might expect; hence it gets its own section. Just as in the case of $N=1$, we define $w\in U(I,\reals^N)$ to be in {\it standard form} if $|w(t)|=1$ for $t\in I$ a.e. Let ${\mathcal SF}(I,\reals^N)$ be the subset of $U(I,\reals^N)$ consisting of functions in standard form. By Theorem \ref{arclengthpar}, we can express $U(I,\reals^N)$ as the disjoint union
\begin{equation}
U(I,\reals^N)=\coprod_{w\in{\mathcal SF}(I,\reals^N)}w\tilde\Gamma.
\end{equation}

We define $q\in L^2(I,\reals^N)$ to be a {\it step function} if we can express $I$ as a finite union of disjoint sub-intervals in such a way that $q$ is constant on each of these subintervals. Again, we don't care what happens at the endpoints of the subintervals since $q$ is only well-defined almost everywhere. 

\begin{lemma}\label{orbinclude}
For all $q\in L^2(I,\reals^N)$, $q\tilde\Gamma\subset[q]$.
\end{lemma}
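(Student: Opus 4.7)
The plan is to mirror the two-step structure of the proof of Lemma~\ref{zerospeedinorbits}: first establish the inclusion for step functions, then extend to arbitrary $q \in L^2(I,\reals^N)$ by density. The density step transfers essentially verbatim from the scalar case---given $q \in L^2(I,\reals^N)$, $\tilde\gamma \in \tilde\Gamma$, and $\epsilon > 0$, choose a step function $v$ with $\|q-v\|<\epsilon/3$, use the step-function case to produce $\gamma\in\Gamma$ with $\|v*\gamma - v*\tilde\gamma\|<\epsilon/3$, and combine these via the triangle inequality and the $L^2$-isometry of $\Gamma$ and $\tilde\Gamma$ to conclude $\|q*\gamma - q*\tilde\gamma\|<\epsilon$. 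So the real work is the step-function case.

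For step functions in $\reals^N$, the scalar tools (Lemmas~\ref{charstep} and \ref{sameshape}) that powered Lemma~\ref{zerospeedinorbits} have no obvious analogue, since they relied on the sign structure of $\reals$. Instead I would build a single explicit sequence in $\Gamma$ approximating $\tilde\gamma$ that works for every step function at once: set
$$\gamma_n(t) = \bigl(1-\tfrac{1}{n}\bigr)\tilde\gamma(t) + \tfrac{1}{n}\,t.$$
Then $\gamma_n$ is absolutely continuous with $\gamma_n(0)=0$, $\gamma_n(1)=1$, and $\gamma_n'(t) \geq 1/n > 0$ a.e., so $\gamma_n \in \Gamma$; moreover $\gamma_n \to \tilde\gamma$ uniformly on $I$.

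To show $q*\gamma_n \to q*\tilde\gamma$ in $L^2$ for a step function $q$, I would use the isometry property of both $\Gamma$ and $\tilde\Gamma$ to rewrite $\|q*\gamma_n - q*\tilde\gamma\|^2 = 2\|q\|^2 - 2\langle q*\gamma_n,\,q*\tilde\gamma\rangle$, reducing the problem to showing
$$\langle q*\gamma_n,\,q*\tilde\gamma\rangle = \int_0^1 \langle q(\gamma_n(t)),q(\tilde\gamma(t))\rangle \sqrt{\gamma_n'(t)\tilde\gamma'(t)}\,dt \longrightarrow \|q\|^2.$$
Since $q$ is bounded, the integrand is dominated by $\|q\|_\infty^2 \cdot \tfrac{1}{2}(\gamma_n' + \tilde\gamma') \leq \|q\|_\infty^2 (\tilde\gamma' + \tfrac{1}{2})$, which lies in $L^1$, so dominated convergence applies; combined with the pointwise limit $|q(\tilde\gamma(t))|^2 \tilde\gamma'(t)$ and the change-of-variables formula for absolutely continuous $\tilde\gamma$, the integral tends to $\int_0^1|q(u)|^2\,du = \|q\|^2$.

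The main obstacle is justifying pointwise a.e.\ convergence of the integrand when $\tilde\gamma(t)$ lands on a jump point $t_i$ of $q$. If $\tilde\gamma$ is constant on an interval at the value $t_i$, that set of $t$'s can have positive measure and $q(\gamma_n(t))$ need not agree with $q(\tilde\gamma(t))$ for any $n$; however on such plateaus $\tilde\gamma'=0$ a.e., so the weight $\sqrt{\gamma_n'\tilde\gamma'}$ already kills the ambiguity. Off these plateaus, $\{t:\tilde\gamma(t)=t_i\}$ is a null set for each of the finitely many jump points, and uniform convergence $\gamma_n\to\tilde\gamma$ forces $q(\gamma_n(t)) \to q(\tilde\gamma(t))$ on the complement, closing the argument.
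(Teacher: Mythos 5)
Your proof is correct, and it takes a genuinely different route from the paper's. Both proofs share the same outer structure (prove the inclusion for step functions, then extend by density), and your density step is essentially the one in the paper. But the step-function case differs substantially. The paper subdivides $I$ at the preimages $s_k=\gamma^{-1}(t_k)$ of the jump points of $q$, then on each piece views $\sqrt{\gamma_k'}$ as a point on a sphere in $L^2([s_{k-1},s_k])$ and approximates it along the great-circle geodesic from a constant function; the approximants $w_{k,j}$ are positive, so the glued $\gamma_j$ lie in $\Gamma$, and convergence holds subinterval by subinterval. Your construction is global and explicit: $\gamma_n=(1-\tfrac1n)\tilde\gamma+\tfrac1n\,t$ plainly lies in $\Gamma$ with no case analysis, converges uniformly to $\tilde\gamma$, and the $L^2$ convergence $q*\gamma_n\to q*\tilde\gamma$ is then a dominated-convergence argument with the fixed dominating function $\|q\|_\infty^2(\tilde\gamma'+\tfrac12)$. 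The only place you use the step-function structure is in the pointwise-a.e.\ analysis of the integrand, and you correctly handle the subtlety that $\tilde\gamma^{-1}(t_i)$ may be a nondegenerate interval: on the interior of such a plateau $\tilde\gamma'=0$ so the weight $\sqrt{\gamma_n'\tilde\gamma'}$ annihilates the integrand, and the finitely many endpoints form a null set. Your approach buys a cleaner, more analytic argument with a one-line sequence construction (no gluing, no sphere geodesics); the paper's approach is more geometric and keeps visible the subinterval structure that the rest of the paper exploits.
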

 
\begin{proof} The proof follows the same lines as Lemma \ref{zerospeedinorbits}, with some minor adjustments. We first prove the lemma for step functions, then extend by density to all of $U(I,\reals^N)$. 
 
 Let $q\in U(I,\reals^N)$ be a step function, and let $\gamma\in\tilde\Gamma$. Choose $0=t_0<t_1<\dots<t_n=1$ such that $q$ is constant on each interval $(t_k,t_{k+1})$. Choose $0=s_0<s_1<\dots<s_n=1$ so that $\gamma(s_k)=t_k$ for each $k=0,1,\dots,n$. For each $k$, define $\gamma_k=\gamma|_{[s_{k-1},s_k]}$. Clearly, for each $k$, the function $\sqrt{\gamma_k'}$ is an element of the sphere of radius $\sqrt{t_k-t_{k-1}}$ centered at 0 in $L^2[s_{k-1},s_k]$. Another element of this sphere is the constant function $w_k(t)=\sqrt{(t_k-t_{k-1})/(s_k-s_{k-1})}$. For $j=1,2,3,\dots$, let $\{w_{k,j}\}$ be any sequence of functions along the geodesic arc from $w_k$ to $\sqrt{\gamma_k'}$ in this sphere such that $\lim_{j\to\infty}w_{k,j}=\sqrt{\gamma_k'}$ in $L^2[s_{k-1},s_k]$. Note that for each $j$, $w_{k,j}(s)>0$ for all $s\in[s_{k-1},s_k]$. Finally, define $\gamma_{k,j}(s)=t_{k-1}+\int_{s_{k-1}}^s(w_{k,j}(u))^2\,du$ for all $s\in[s_{k-1},s_k]$.
Clearly, for all $k$ and $j$, $\gamma_{k,j}$ is an absolutely continuous, monotone homeomorphism from $[s_{k-1},s_k]$ to $[t_{k-1},t_k]$. For each $j$, define $\gamma_j:I\to I$ by setting $\gamma_j(s)=\gamma_{k,j}(s)$ for all $s\in[s_{k-1},s_k]$. Clearly, $\gamma_j\in\Gamma$ for all $j$. Assuming that $q(t)=c_k$ on $[t_{k-1},t_k]$, it follows that on $[s_{k-1},s_k]$, $q*\gamma_j(s)=\sqrt{\gamma_{k,j}'(s)}c_k=w_{k,j}(s)c_k$. By definition of $w_{k,j}$, it follows that in $L^2[s_{k-1},s_k]$, $\lim_{j\to\infty}q*\gamma_{k,j}=\lim_{j\to\infty}w_{k,j}c_k=\sqrt{\gamma_k'}c_k=q*\gamma$. Since this limit holds in each subinterval separately, it must hold in $L^2(I)$. Thus, we have produced a sequence $\gamma_j$ in $\Gamma$ such that $q*\gamma_j\to q*\gamma$, which proves the lemma for step functions. The extension from step functions to general elements of $L^2(I,\reals^N)$ is exactly the same as in the proof of Lemma \ref{zerospeedinorbits}, so we omit it. 
\end{proof}

Next, we want to prove that for all functions $w\in U(I,\reals^N)$ that are in standard form, $[w]\in w\tilde\Gamma$. Such a function $w$ can vanish only on a set of measure zero; however, each component function of $w$ may vanish on a set of measure greater than zero. In order to make our proof run more smoothly, it is helpful to prove that given such a $w$, we can rotate it (using a matrix in $O(n,\reals)$) to obtain a new function in $U(I,\reals^N)$ with the property that all of its component functions vanish only on a set of measure zero. The next few lemmas prove that such a rotation exists. 

\begin{lemma}\label{orthohyper} Let $q:I\to \reals^N$ (where $n\geq 1$) be an $L^2$ function with the property that $q^{-1}(0)$ has measure $0$. Then there exist $N$ pairwise orthogonal $(N-1)$-dimensional linear subspaces of $\reals^N$, which we denote by $H_1,\dots,H_N$, with the property that $q^{-1}(H_k)$ has measure zero for $k=1,\dots,N$. 
\end{lemma}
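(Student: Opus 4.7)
My plan is to first identify $(N-1)$-dimensional subspaces with unit vectors via orthogonal complements, so that the statement becomes a statement about an orthonormal basis, and then use a Fubini argument to show that almost every such basis works.

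Concretely, every $(N-1)$-dimensional linear subspace of $\reals^N$ has the form $v^\perp$ for some unit vector $v\in S^{N-1}$, and two subspaces $v^\perp$ and $w^\perp$ are orthogonal exactly when $v\perp w$. So it is enough to exhibit an orthonormal basis $v_1,\dots,v_N$ of $\reals^N$ such that for every $k$, the set
\[
E_{v_k}=\{t\in I:\langle q(t),v_k\rangle =0\}=q^{-1}(v_k^\perp)
\]
has Lebesgue measure zero; then $H_k=v_k^\perp$ does the job.

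The main step is to show that the ``bad'' set $B=\{v\in S^{N-1}:\mu(E_v)>0\}$ has spherical measure zero. For this I would apply Fubini to the jointly measurable set $\{(t,v)\in I\times S^{N-1}:\langle q(t),v\rangle=0\}$, using the rotation-invariant probability measure $\sigma$ on $S^{N-1}$. For each $t$ with $q(t)\neq 0$, the slice $\{v\in S^{N-1}:\langle q(t),v\rangle=0\}$ is a great $(N-2)$-sphere, hence has $\sigma$-measure zero; by hypothesis this covers a.e. $t\in I$. Fubini then gives
\[
\int_{S^{N-1}}\mu(E_v)\,d\sigma(v)=\int_I \sigma\{v:\langle q(t),v\rangle=0\}\,dt=0,
\]
so $\mu(E_v)=0$ for $\sigma$-almost every $v$, which is exactly the claim $\sigma(B)=0$.

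Finally, I would pass from ``each coordinate separately avoids $B$'' to ``all $N$ coordinates of an orthonormal basis avoid $B$ simultaneously'' by working with the Haar probability measure on $O(N)$. For each $k$, the projection $\pi_k:O(N)\to S^{N-1}$ sending an orthogonal matrix to its $k$-th column pushes Haar measure forward to $\sigma$, because both are rotation-invariant. Hence $\pi_k^{-1}(B)$ is Haar-null, and the union $\bigcup_{k=1}^N \pi_k^{-1}(B)$ is still Haar-null; in particular its complement is nonempty, and any element of it gives an orthonormal basis $v_1,\dots,v_N$ with $\mu(E_{v_k})=0$ for all $k$. The only subtlety worth double-checking is the joint measurability and the fact that a great $(N-2)$-sphere has $\sigma$-measure zero in $S^{N-1}$, both of which are standard; the $N=1$ case is trivial since the only $0$-dimensional subspace is $\{0\}$, whose preimage is null by hypothesis.
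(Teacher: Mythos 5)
Your argument is correct, and it is genuinely different from the paper's. The paper proceeds by an explicit inductive construction: it first observes that at most countably many lines $\ell$ through the origin can have $\mu(q^{-1}(\ell))>0$ (since $I$ cannot contain uncountably many pairwise disjoint positive-measure sets), builds a flag $0\subset P_1\subset\cdots\subset P_{N-1}$ of subspaces with null preimages by rotating in a two-plane and discarding countably many bad angles $\theta$, and then closes an induction on $N$ by projecting $q$ to a well-chosen hyperplane $H_N$. Your proof replaces all of this with a single Fubini--Tonelli computation on $I\times S^{N-1}$ (showing the bad set $B$ of normal directions is $\sigma$-null) followed by the Haar-measure pushforward argument on $O(N)$ to select a whole orthonormal basis simultaneously avoiding $B$. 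The two approaches buy different things: the paper's argument is elementary in its measure-theoretic prerequisites and constructive in spirit, while yours is shorter, yields the stronger statement that \emph{almost every} orthonormal frame works, and neatly sidesteps the awkwardness of trying to choose the basis vectors one at a time (where one would need to know $B$ is null inside lower-dimensional great spheres, which $\sigma(B)=0$ alone does not give — the Haar argument is exactly what avoids that issue). One small point worth being explicit about when writing this up: the paper's phrase ``pairwise orthogonal $(N-1)$-dimensional subspaces'' cannot mean mutual orthogonality of the subspaces themselves when $N\ge 3$; as the subsequent use in Lemma~\ref{goodorthochange} makes clear, it means the normal lines $H_k^\perp$ are pairwise orthogonal, which is exactly the identification with orthonormal bases that you make at the start.
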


\begin{proof} First, we note that there exist at most countably many lines $l$ through the origin in $\reals^N$ with the property that $q^{-1}(l)$ has nonzero measure. To see this, note that as $l$ varies over all lines through the origin of $\reals^N$, the sets $q^{-1}(l-\{0\})$ are disjoint from each other. But it follows that for all but a countable set of these lines, the set $q^{-1}(l-\{0\})$ has measure 0. This is because $I$ cannot contain an uncountable collection of pairwise disjoint subsets, all with measure greater than 0. (The proof of this is easy: Let ${\mathcal C}$ denote a collection of pairwise disjoint measurable subsets of $I$. For each integer $n>0$, define the subcollection ${\mathcal C}_n\subset{\mathcal C}$ by ${\mathcal C}_n=\{C\in{\mathcal C}:\mu(C)>
\frac{1}{n}\}$. Clearly, the cardinality of ${\mathcal C}_n$ is at most $n$. The subcollection of ${\mathcal C}$ consisting of all sets of measure greater than 0 is just the union of these sets ${\mathcal C}_n$, which is countable.)  It follows that for all but a countable set of lines $l$ through the origin in $\reals^N$, $\mu(q^{-1}(l))=\mu(q^{-1}(l-\{0\}))=0$. Note that this last equation used the fact that $\mu(q^{-1}(0))=0$.

We now construct a sequence $0\subset P_1\subset P_2\subset\dots\subset P_{N-1}$ of linear subspaces of $\reals^N$, such that for each $k$, $P_k$ has dimension $k$ and $\mu(q^{-1}(P_k))=0$. We construct this sequence inductively. In the first paragraph of this proof, we showed that there exists a line $l$ through the origin such that $q^{-1}(l)$ has measure $0$. Let $P_1$ be any such $l$. For the inductive step, assume we have already constructed $0\subset P_1\subset\dots\subset P_k$ satisfying the conditions, where $k<N-1$. Choose an orthonormal basis $\{u_1,\dots,u_k\}$ of  $P_{k}$. Let $u$ and $v$ be any orthogonal pair of unit vectors, both in the orthogonal complement of $P_k$ in $\reals^N$. For each real number $\theta\in[0,\pi)$, let 
$S_{\theta}$ to be the linear span of $\{u_1,\dots,u_{k},(\cos\theta)u+(\sin\theta)v\}$. Clearly the subsets of $\reals^N$ in the collection $\{S_\theta-P_k\}_{\theta\in[0,\pi)}$ are pairwise disjoint, and hence the subsets of $I$ in the collection $\{q^{-1}(S_\theta-P_k)\}_{\theta\in[0,\pi)}$ are also pairwise disjoint. It follows that for all but a countable set of $\theta\in[0,\pi)$, $\mu(q^{-1}(S_\theta-P_k))=0$.
But since $\mu(q^{-1}(P_k))=0$, we know that $\mu(q^{-1}(S_\theta))=\mu(q^{-1}(S_\theta-P_k))=0$ for all but a countable set of $\theta$. Choosing one of these $\theta$, we then set $P_{k+1}=S_\theta$, completing the inductive step of the construction of our sequence $0\subset P_1\subset P_2\subset\dots\subset P_{N-1}$. 

We now prove Lemma \ref{orthohyper} by induction. It is trivially true for $N=1$. Assume the lemma has been proved for functions $I\to \reals^{N-1}$, and now suppose we are given an $L^2$ function $q:I\to \reals^N$ such that $q^{-1}(0)$ has measure $0$. Using the last paragraph, construct a sequence $0\subset P_1\subset P_2\subset\dots\subset P_{N-1}$ of linear subspaces of $\reals^N$, such that for each $k$, $P_k$ has dimension $k$ and $\mu(q^{-1}(P_k))=0$. We are now going to make an adjustment to $P_{N-1}$. Let $\{u,v\}$ be an orthonormal basis of the orthogonal complement of $P_{N-2}$ in $\reals^N$. For each $\theta\in[0,\pi)$, let $B_\theta$ be the hyperplane spanned by $P_{N-2}$ and the vector $(\cos\theta)u+(\sin\theta)v$, and let $l_\theta$ be the orthogonal complement of $B_\theta$ in $\reals^N$. Since we know that $q^{-1}(P_{N-2})$ has measure zero, we can argue just as in the last paragraph to show that for all but a countable set of $\theta\in[0,\pi)$, $q^{-1}(B_\theta)$ has measure zero. Similarly, for all but a countable set of $\theta$, $q^{-1}(l_\theta)$ has measure 0. Since the union of two countable sets is countable, it follows that we can choose a $\theta_0\in[0,\pi)$ such that both $q^{-1}(B_{\theta_0})$ and $q^{-1}(l_{\theta_0})$ have measure 0. Set $H_N=B_{\theta_0}$ and set $l=l_{\theta_0}$. Define $\tilde q:I\to H_N$ by $\tilde q=\Pi\circ q$, where $\Pi$ denotes orthogonal projection $\reals^N\to H_N$. Clearly, $\tilde q^{-1}(0)=q^{-1}(l)$ has measure $0$. Also, since projection decreases norms,  $\tilde q$ is still $L^2$. By the induction hypothesis, we can find a pairwise orthogonal set of $N-1$ subspaces of $H_N$, which we will denote by $\tilde H_1,\dots,\tilde H_{N-1}$, each of dimension $n-2$, such that $\tilde q^{-1}(\tilde H_k)$ has measure 0 for $k=1,\dots,N-1$. Now define $H_k=\tilde H_k\oplus l$ for each $k=1,\dots,N-1$. Clearly, $H_1,\dots,H_N$ comprise a set of $N$ pairwise orthogonal $(N-1)$-dimensional subspaces of $\reals^N$. Also, for $k=1,\dots,N-1$, $q^{-1}(H_k)=\tilde q^{-1}(\tilde H_k)$ has measure 0. Since we already know that $q^{-1}(H_N)$ has measure 0, this completes the proof of Lemma \ref{orthohyper}. 
\end{proof}
Let us think of the elements of $\reals^N$ as column vectors, so if $q\in L^2(I,\reals^N)$, we can write $q(t)=(q_1(t),\dots,q_N(t))'$, where the ``prime" denotes the matrix transpose. Then the group of orthogonal matrices $O(N,\reals)$ acts on $\reals^N$, $AC_0(I,\reals^N)$, $L^2(I,\reals^N)$ and $U(I,\reals^N)$ from the left in the obvious way. Furthermore, this action on $L^2(I,\reals^N)$ preserves the $L^2$ inner product, and hence takes $U(I,\reals^N)$ to itself. It also commutes with the bijection $Q:AC_0(I,\reals^N)\to L^2(I,\reals^N)$ defined earlier in this paper. In addition, the left actions of $O(N,\reals)$ on $AC_0(I,\reals^N)$, $L^2(I,\reals^N)$ and $U(I,\reals^N)$ commute with the right actions of $\Gamma$ and $\tilde\Gamma$ on all three of these spaces. 

\begin{lemma}\label{goodorthochange} Suppose $q\in L^2(I,\reals^N)$, where we write $q(t)=(q_1(t),\dots,q_N(t))'$, and assume that $q^{-1}(0)$ has measure zero. Then there exists a matrix $A\in O(N,\reals)$, such that if we define $\tilde q(t)=(\tilde q_1(t),\dots,\tilde q_N(t))'$ by $\tilde q(t)=Aq(t)$, then $\tilde q_k^{-1}(0)$ has measure $0$ for all $k=1,\dots, N$.
\end{lemma}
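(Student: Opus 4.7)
The plan is to derive this lemma directly from Lemma \ref{orthohyper} by translating the statement ``the $k$-th component of $Aq$ vanishes'' into ``$q$ lies in a certain hyperplane.''

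First, I would set up the dictionary between orthonormal bases of $\reals^N$ and orthogonal matrices. If $A \in O(N,\reals)$ has rows $a_1^{\,\prime},\dots,a_N^{\,\prime}$, then $\{a_1,\dots,a_N\}$ is an orthonormal basis of $\reals^N$, and the $k$-th component of $\tilde q(t) = A q(t)$ is precisely $\tilde q_k(t) = \langle a_k, q(t)\rangle$. Hence $\tilde q_k^{-1}(0) = q^{-1}(a_k^\perp)$, where $a_k^\perp$ is the hyperplane in $\reals^N$ normal to $a_k$. So finding the desired $A$ is equivalent to finding an orthonormal basis $\{a_1,\dots,a_N\}$ of $\reals^N$ such that $q^{-1}(a_k^\perp)$ has measure zero for each $k$.

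Next, I would invoke Lemma \ref{orthohyper}, which yields $N$ pairwise orthogonal $(N-1)$-dimensional subspaces $H_1,\dots,H_N$ of $\reals^N$ with $\mu(q^{-1}(H_k))=0$ for each $k$. ``Pairwise orthogonal'' here is naturally interpreted (and matches the inductive construction in Lemma \ref{orthohyper}, where each new subspace is assembled from a line orthogonal to all previous summands) as meaning that their one-dimensional orthogonal complements $l_k = H_k^\perp$ are mutually perpendicular lines in $\reals^N$, and hence span $\reals^N$. Pick a unit vector $a_k \in l_k$ for each $k$; then $\{a_1,\dots,a_N\}$ is an orthonormal basis and $a_k^\perp = H_k$.

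Finally, I would let $A$ be the orthogonal matrix whose $k$-th row is $a_k^{\,\prime}$ and set $\tilde q = Aq$. By the identification above, $\tilde q_k^{-1}(0) = q^{-1}(a_k^\perp) = q^{-1}(H_k)$, which has measure zero by the choice of $H_k$. This completes the proof. There is essentially no obstacle here: the entire content of the lemma has been packaged into Lemma \ref{orthohyper}, and the only step worth spelling out carefully is the justification that pairwise orthogonal hyperplanes in $\reals^N$ correspond to an orthonormal basis of normal vectors; once that is in place, the identity $\tilde q_k^{-1}(0)=q^{-1}(a_k^\perp)$ finishes the argument.
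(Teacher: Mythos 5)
Your proof is correct and follows essentially the same route as the paper's: both invoke Lemma \ref{orthohyper}, take unit normals to the $H_k$ to form an orthonormal basis, and define $A$ as the orthogonal change of basis sending those normals to the standard basis vectors (your ``rows of $A$'' description and the paper's condition $Au_k=e_k$ are the same matrix). The only addition you make is the explicit remark justifying the reading of ``pairwise orthogonal'' hyperplanes as hyperplanes with mutually orthogonal normal lines, which the paper leaves implicit.
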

\begin{proof} By Lemma \ref{orthohyper}, we can choose $N$ pairwise orthogonal $(N-1)$-dimensional linear subspaces of $\reals^N$, which we denote by $H_1,\dots,H_N$, with the property that $q^{-1}(H_k)$ has measure zero for $k=1,\dots,N$. For each $k$, let $u_k\in \reals^N$ be a unit vector orthogonal to $H_k$. Clearly, $\{u_1,\dots,u_N\}$ forms an orthonormal basis for $\reals^N$. Let $\{e_1,\dots,e_N\}$ denote the standard basis for $\reals^N$. Let $A\in O(N,\reals)$ be the unique matrix satisfying $Au_k=e_k$ for all $k$. Define $\tilde q(t)=(\tilde q_1(t),\dots,\tilde q_N(t))'$ by $\tilde q(t)=Aq(t)$. By definition of $A$, it is immediate that $\tilde q_k^{-1}(0)=\tilde q^{-1}(e_k^\perp)=q^{-1}(u_k^\perp)=q^{-1}(H_k)$ has measure zero.
\end{proof}

\begin{lemma}\label{paramapproach} Suppose $q\in L^2(I,\reals)$ satisfies $\mu(q^{-1}(0))=0$. Also, assume that $\{\tau_k\}$ is a sequence in $\tilde\Gamma$, and $\tau\in\tilde\Gamma$. If $\lim_{k\to\infty}q*\tau_k=q*\tau$ (with respect to the $L^2$ metric), then for all $t\in I$, $\lim_{k\to\infty}\tau_k(t)=\tau(t)$.
\end{lemma}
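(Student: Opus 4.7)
The plan is to convert $L^2$-convergence of $q*\tau_k$ into pointwise convergence of $\tau_k$ via a carefully chosen auxiliary monotone function. Specifically, define $T:I\to[0,\|q\|_{L^2}^2]$ by $T(s)=\int_0^s q(v)^2\,dv$. Since $q^2>0$ almost everywhere (this is where the hypothesis $\mu(q^{-1}(0))=0$ enters), $T$ is strictly increasing on $I$; being continuous, it is a homeomorphism onto its image, so $T^{-1}$ is continuous on $[0,\|q\|_{L^2}^2]$. The whole proof then reduces to showing that $T(\tau_k(t))\to T(\tau(t))$ uniformly in $t$, after which continuity of $T^{-1}$ finishes the argument.

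The first step is a substitution identity. Because $\tau_k,\tau\in\tilde\Gamma$ are absolutely continuous, integration by substitution (invoked several times earlier in the paper) gives
$$\int_0^t (q*\tau_k)(u)^2\,du=\int_0^t q(\tau_k(u))^2\tau_k'(u)\,du=T(\tau_k(t)),$$
and likewise for $\tau$. So the task becomes showing that the indefinite integrals of $(q*\tau_k)^2$ converge to those of $(q*\tau)^2$ uniformly in the upper limit. For this, I would write
$$\left|\int_0^t\!\bigl[(q*\tau_k)^2-(q*\tau)^2\bigr]\,du\right|\leq\int_0^1 \bigl|q*\tau_k-q*\tau\bigr|\,\bigl|q*\tau_k+q*\tau\bigr|\,du,$$
and apply Cauchy--Schwarz to bound the right-hand side by $\|q*\tau_k-q*\tau\|_{L^2}\cdot\|q*\tau_k+q*\tau\|_{L^2}$. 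The first factor tends to zero by hypothesis; the second is bounded by $2\|q\|_{L^2}$ because $\tilde\Gamma$ acts by $L^2$-isometries. Since this bound is independent of $t$, I obtain $T(\tau_k(t))\to T(\tau(t))$ uniformly on $I$, hence in particular pointwise.

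Applying the continuous function $T^{-1}$ to both sides gives $\tau_k(t)\to\tau(t)$ for every $t\in I$ (and indeed uniformly, using uniform continuity of $T^{-1}$ on the compact interval $[0,\|q\|_{L^2}^2]$). The main obstacle is essentially the conceptual one of realizing that the right object to track is not $\tau_k$ directly but the arc length $T\circ\tau_k$, which is automatically well-behaved under the $L^2$ convergence of the SRVFs; once that is in place, the routine Cauchy--Schwarz bound handles the analysis. Without the hypothesis $\mu(q^{-1}(0))=0$ the function $T$ could be flat on subintervals, and one could no longer recover $\tau_k(t)$ from $T(\tau_k(t))$, which shows that this hypothesis is doing the essential work.
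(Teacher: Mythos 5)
Your proof is correct and takes a genuinely different, cleaner route than the paper's. The paper reduces to the case $|q(t)|=1$ a.e.\ via Theorem~\ref{arclengthpar}, then argues by contrapositive: supposing $\tau_k(t_0)\not\to\tau(t_0)$, it splits the inner product at $t_0$, applies Cauchy--Schwarz on $[0,t_0]$ and $[t_0,1]$, and invokes the strict-concavity estimate on $p(t)=\sqrt{ta}+\sqrt{(U-t)(U-a)}$ (from Lemma~\ref{differentfunc}) to produce a positive lower bound on $d(q*\tau_k,q*\tau)$, contradicting $L^2$-convergence. You instead work directly with the cumulative square-integral $T(s)=\int_0^s q^2$, use the substitution identity $T(\tau_k(t))=\int_0^t (q*\tau_k)^2$, and apply a single global Cauchy--Schwarz bound $\left|\int_0^t[(q*\tau_k)^2-(q*\tau)^2]\right|\le\|q*\tau_k-q*\tau\|_{L^2}\cdot 2\|q\|_{L^2}$ to get $T\circ\tau_k\to T\circ\tau$ uniformly; strict monotonicity of $T$ (from $\mu(q^{-1}(0))=0$) then lets you apply the continuous inverse $T^{-1}$. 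Your approach avoids both the arc-length reduction and the case-splitting contrapositive, is essentially self-contained, and delivers a strictly stronger conclusion---uniform convergence of $\tau_k$ to $\tau$---as a free byproduct, whereas the paper's argument only gives pointwise convergence. Every step checks out: the substitution is justified by absolute continuity of $\tau_k$, and $\|q*\tau_k\|_{L^2}=\|q\|_{L^2}$ because $\tilde\Gamma$ acts by isometries.
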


\begin{proof} First, we observe that it suffices to prove the lemma under the additional assumption that $|q(t)|=1$ for almost all $t\in I$. For, suppose we have already completed the proof under this extra assumption. By Theorem \ref{arclengthpar}, we know that we can write $q=w*\gamma$ where $|w(t)|$ is constant for almost all $t\in I$ and $\gamma\in\tilde\Gamma$. Since we are assuming that $\mu(q^{-1}(0))=0$, it follows that $\gamma\in\Gamma$, since $\dot\gamma(t)=0$ only for $t$ in a set of measure 0. Now, since we are assuming that $\lim_{k\to\infty}q*\tau_k=q*\tau$, it follows that $\lim_{k\to\infty}w*(\gamma\tau_k)=w*(\gamma\tau)$ in $L^2$. By the version of the lemma that we are assuming to be proved, it follows that for all $t\in I$, $\lim_{k\to\infty}\gamma(\tau_k(t))=\gamma(\tau(t))$. Since $\gamma\in\Gamma$ is continuous and bijective, so is $\gamma^{-1}$, hence we conclude that for all $t\in I$, $\lim_{k\to\infty}\tau_k(t)=\tau(t)$. 

So we now prove the lemma with the assumption that $|q(t)|=1$ for almost all $t\in I$. Assume that $\lim_{k\to\infty}q*\tau_k=q*\tau$ with respect to the $L^2$ metric. We proceed by contrapositive. Suppose there exists a $t_0$ for which $\lim_{k\to\infty}\tau_k(t_0)\neq\tau(t_0)$. (This includes the possibility that $\lim_{k\to\infty}\tau_k(t_0)$ does not exist.) Therefore, there exists $\epsilon>0$ such that for all $M>0$, there exists a $k>M$ with $|\tau_k(t_0)-\tau(t_0)|>\epsilon$. Consider a $\tau_k$ such that $|\tau_k(t_0)-\tau(t_0)|>\epsilon$. There are two cases.

{\bf Case 1:} Suppose $\tau_k(t_0)<\tau(t_0)-\epsilon$. Compute $\langle q*\tau_k,q*\tau\rangle=$
$$\int_0^1(q*\tau_k)(t)(q*\tau)(t)dt=\int_0^{t_0}(q*\tau_k)(t)(q*\tau)(t)dt+\int_{t_0}^1(q*\tau_k)(t)(q*\tau)(t)dt$$
$$\leq\sqrt{\int_0^{t_0}(q*\tau_k)(t)^2dt}\sqrt{\int_0^{t_0}(q*\tau)(t)^2dt}+
\sqrt{\int_{t_0}^{1}(q*\tau_k)(t)^2dt}\sqrt{\int_{t_0}^{1}(q*\tau)(t)^2dt}$$
(by Cauchy Schwarz)
$$=\sqrt{\int_0^{\tau_k(t_0)}q(t)^2dt}\sqrt{\int_0^{\tau(t_0)}q(t)^2dt}+
\sqrt{\int_{\tau_k(t_0)}^{1}q(t)^2dt}\sqrt{\int_{\tau(t_0)}^{1}q(t)^2dt}$$
(integrating by substitution)
$$=\sqrt{\tau_k(t_0)}\sqrt{\tau(t_0)}+\sqrt{1-\tau_k(t_0)}\sqrt{1-\tau(t_0)}$$
(using $|q(t)|=1$ for almost all $t\in I$).

By the calculation at the beginning of the proof of Lemma \ref{differentfunc}, this last quantity is less than
$$\sqrt{\tau(t_0)-\epsilon}\sqrt{\tau(t_0)}+\sqrt{1-(\tau(t_0)-\epsilon)}\sqrt{1-\tau(t_0)}$$
which, in turn, is less than 1. Define $M_\epsilon=\sqrt{\tau(t_0)-\epsilon}\sqrt{\tau(t_0)}+\sqrt{1-(\tau(t_0)-\epsilon)}\sqrt{1-\tau(t_0)}$. So in Case 1, we have shown that $\langle q*\tau_k,q*\tau\rangle<M_\epsilon<1$.

{\bf Case 2:} Suppose $\tau_k(t_0)>\tau(t_0)+\epsilon$. We skip the very similar details; the end result is that we produce an $m_\epsilon$ such that $\langle q*\tau_k,q*\tau\rangle<m_\epsilon<1$. Letting $\theta_\epsilon=\min\{\sqrt{2-2M_\epsilon},\sqrt{2-2m_\epsilon}\}>0$, we see that $d(q*\tau_k,q*\tau)>\theta_\epsilon>0$. It follows that $q*\tau_k\not\to q*\tau$ in the $L^2$ metric, which proves the contrapositive and completes the proof of the lemma. 
\end{proof}

\begin{thm} Let $q:I\to \reals^N$ be any $L^2$ function such that $|q(t)|=1$ for almost all $t\in I$. Then $[q]=q\tilde\Gamma$. 
\end{thm}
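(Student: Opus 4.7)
By Lemma \ref{orbinclude} we already have $q\tilde\Gamma\subset[q]$, so the substance of the theorem is the reverse inclusion $[q]\subset q\tilde\Gamma$. The first step is a reduction: since $|q(t)|=1$ a.e., in particular $q^{-1}(0)$ has measure zero, so Lemma \ref{goodorthochange} produces $A\in O(N,\reals)$ such that the components $(Aq)_k$ each vanish only on a measure-zero set. Because the left action of $O(N,\reals)$ preserves the $L^2$ inner product and commutes with the right action of $\tilde\Gamma$, multiplication by $A$ is an equivariant $L^2$-isometry, hence carries $q\tilde\Gamma$ onto $(Aq)\tilde\Gamma$ and $[q]$ onto $[Aq]$. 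It therefore suffices to prove the theorem under the additional assumption that every component $q_k$ satisfies $\mu(q_k^{-1}(0))=0$.

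Now let $w\in[q]$, so there is a sequence $\{\gamma_n\}\subset\Gamma$ with $q*\gamma_n\to w$ in $L^2(I,\reals^N)$. Passing to components, this says $q_k*\gamma_n\to w_k$ in $L^2(I,\reals)$ for every $k=1,\dots,N$. Each scalar function $q_k$ satisfies the hypothesis of Corollary \ref{generalorbitstructure}, so $[q_k]=q_k\tilde\Gamma$, and therefore for each $k$ there exists $\tilde\gamma_k\in\tilde\Gamma$ with $w_k=q_k*\tilde\gamma_k$; equivalently, $q_k*\gamma_n\to q_k*\tilde\gamma_k$ in $L^2(I,\reals)$.

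The key remaining point, and in my view the main obstacle, is that a priori the $\tilde\gamma_k$ obtained from different components need not coincide, whereas what we want is a single element of $\tilde\Gamma$ reparametrizing $q$ into $w$. This is exactly where Lemma \ref{paramapproach} is needed: for each fixed $k$, since $\mu(q_k^{-1}(0))=0$ and $q_k*\gamma_n\to q_k*\tilde\gamma_k$ in $L^2$, that lemma gives pointwise convergence $\gamma_n(t)\to\tilde\gamma_k(t)$ for every $t\in I$. Because the sequence $\{\gamma_n(t)\}$ is a fixed sequence of real numbers independent of $k$, all the limits $\tilde\gamma_k(t)$ must agree, so there is a single $\tilde\gamma\in\tilde\Gamma$ with $\tilde\gamma=\tilde\gamma_1=\cdots=\tilde\gamma_N$.

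Putting it together, $w_k=q_k*\tilde\gamma$ for each $k$, hence $w=q*\tilde\gamma\in q\tilde\Gamma$. This proves $[q]\subset q\tilde\Gamma$ under the reduction, and transporting back by $A^{-1}$ completes the proof.
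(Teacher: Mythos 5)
Your proof is correct and follows essentially the same route as the paper's: reduce via Lemma \ref{goodorthochange} to the case that every component vanishes only on a null set, apply the scalar orbit-structure result to each component, and then use Lemma \ref{paramapproach} to show that the component reparametrizations all coincide. The only cosmetic difference is that you invoke Corollary \ref{generalorbitstructure} directly on each $q_k$, whereas the paper inlines that corollary's argument by first writing $q_i=w_i*\sigma_i$ with $w_i$ in standard form and $\sigma_i\in\Gamma$; the underlying reasoning is identical.
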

\begin{proof} By Lemma \ref{orbinclude}, we only need to prove that $[q]\subset q\tilde\Gamma$.

Since $O(N,\reals)$ acts on $L^2(I,\reals^N)$ by isometries and this action commutes with the action of $\tilde\Gamma$, we may assume by Lemma \ref{goodorthochange} that for each $i=1,\dots,N$, $q_i^{-1}(0)$ has measure 0. Suppose $v\in[q]$; then there exists a sequence $\{\gamma_k\}$ in $\Gamma$ such that $\lim_{k\to\infty} q*\gamma_k=v$ (with respect to the $L^2$ metric). It follows that for each $i=1,\dots,N$, $\lim_{k\to\infty} q_i*\gamma_k=v_i$. For each $i$, we can write $q_i=w_i*\sigma_i$, where $\sigma_i\in\tilde\Gamma$ and $w_i(t)=\pm 1$ for almost all $t$, by Theorem \ref{arclengthpar}. Then for each $i$, $\lim_{k\to\infty} w_i*(\sigma_i\gamma_k)=v_i$. Since $w_i\tilde\Gamma=[w_i]$ is a closed set, it follows that there exists $\tilde\gamma_i\in\tilde\Gamma$ such that $v_i=w_i*\tilde\gamma_i$. Now, since $q_i(t)=(w_i*\sigma_i)(t)$ vanishes only on a set of measure 0, it follows that $\dot\sigma_i(t)$ also vanishes on a set of measure 0. Hence, $\sigma_i\in\Gamma$. Hence, we may write $w_i=q_i*\sigma_i^{-1}$, where $\sigma_i^{-1}\in\Gamma$. Therefore, $v_i=q_i*(\sigma_i^{-1}\tilde\gamma_i)$, where $\sigma_i^{-1}\tilde\gamma_i\in\tilde\Gamma$. Letting $\tau_i=\sigma_i^{-1}\tilde\gamma_i$, we have now proven that if $v\in[q]$, then for each $i$, we can write $v_i=q_i*\tau_i$, where each $\tau_i\in\tilde\Gamma$. All that is left to prove is that $\tau_1=\dots=\tau_n$; it will then follow that $v=q*\tau$ where $\tau=\tau_1=\dots=\tau_N$. However Lemma \ref{paramapproach} implies immediately that for each $t\in I$, $\tau_i(t)=\lim_{k\to\infty}\gamma_k(t)$, which implies that $\tau=\tau_1=\dots=\tau_n$. This completes the proof of the theorem. \end{proof}

\begin{corollary}\label{generalRnorbitstructure}
If $q\in L^2(I,\reals^N)$, and $q^{-1}(0)$ has measure $0$, then $[q]=q\tilde\Gamma$.
\end{corollary}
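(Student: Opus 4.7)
The plan is to mimic exactly the argument used for Corollary \ref{generalorbitstructure} in the scalar case, now that we have the $\reals^N$-version of Theorem \ref{orbitstructure} (the immediately preceding theorem) in hand. The only difference is that the preceding theorem is stated for $q \in L^2(I,\reals^N)$ with $|q(t)|=1$ a.e., i.e.\ for $q\in{\mathcal SF}(I,\reals^N)$, so we need to reduce the general case to a function in standard form.

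First I would handle the case $q\in U(I,\reals^N)$. By Theorem \ref{arclengthpar}, write $q=w*\gamma$ with $\gamma\in\tilde\Gamma$ and $|w(t)|$ constant a.e. Unit length of $q$ combined with $\int_0^1 \gamma'(t)\,dt = 1$ forces $|w(t)|=1$ a.e., so $w\in{\mathcal SF}(I,\reals^N)$. The relation $q(t)=w(\gamma(t))\sqrt{\gamma'(t)}$ then gives $|q(t)|=\sqrt{\gamma'(t)}$ a.e., so the hypothesis $\mu(q^{-1}(0))=0$ translates directly into $\mu(\{t:\gamma'(t)=0\})=0$, which places $\gamma$ in $\Gamma$ (not merely $\tilde\Gamma$) by the characterization of $\Gamma$ inside $\tilde\Gamma$.

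Next I would apply the preceding theorem to $w$, giving $[w]=w\tilde\Gamma$. Since $\gamma\in\Gamma$, $\gamma^{-1}$ exists in $\Gamma$ and $w=q*\gamma^{-1}$, hence $[q]=[w]=w\tilde\Gamma=q*(\gamma^{-1}\tilde\Gamma)=q\tilde\Gamma$, where the last equality uses $\gamma^{-1}\tilde\Gamma=\tilde\Gamma$ (composition with an element of $\Gamma$ is a bijection of the semigroup $\tilde\Gamma$ onto itself).

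For the case $q\notin U(I,\reals^N)$, let $c=\|q\|_{L^2}>0$ (positivity follows from $\mu(q^{-1}(0))=0$) and set $\hat q=q/c$. The action of $\tilde\Gamma$ commutes with scalar multiplication by $c$, and scalar multiplication is an isometry, so it maps $[\hat q]$ onto $[q]$ and $\hat q\tilde\Gamma$ onto $q\tilde\Gamma$. Applying the unit-norm case to $\hat q$ gives $[\hat q]=\hat q\tilde\Gamma$, and rescaling yields $[q]=q\tilde\Gamma$. There is really no hard step here; the one place to be careful is the justification that $\gamma\in\Gamma$, which is precisely where the measure-zero hypothesis on $q^{-1}(0)$ is used.
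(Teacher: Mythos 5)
Your proof is correct and follows exactly the route the paper indicates: the paper itself says the proof of Corollary~\ref{generalRnorbitstructure} is "the same as that of Corollary~\ref{generalorbitstructure}," and you reproduce that argument faithfully — applying Theorem~\ref{arclengthpar} to write $q=w*\gamma$, using the measure-zero hypothesis to upgrade $\gamma\in\tilde\Gamma$ to $\gamma\in\Gamma$, invoking the $\reals^N$ orbit theorem for $w$, and then using $\gamma^{-1}\tilde\Gamma=\tilde\Gamma$ to conclude, with the rescaling step handling $q\notin U(I,\reals^N)$. You even spell out one detail the paper leaves implicit (that $|w|\equiv 1$, not merely constant, so that the preceding theorem applies), which is a small but welcome clarification.
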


The proof is the same as that of Corollary \ref{generalorbitstructure}, so we omit it.

\begin{corollary}\label{maxdist}
If $q_1,q_2\in U(I,\reals^N)$, then $d([q_1],[q_2])\leq\sqrt{2}$. 
\end{corollary}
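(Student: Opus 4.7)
The plan is to exhibit, for each pair $q_1,q_2\in U(I,\reals^N)$, representatives $\tilde q_1\in[q_1]$ and $\tilde q_2\in[q_2]$ whose supports are essentially disjoint, so that their $L^2$ inner product vanishes. Since each $\tilde q_i$ will still have unit $L^2$-norm (because $\tilde\Gamma$ acts by isometries), we will then obtain $d(\tilde q_1,\tilde q_2)=\sqrt{1+1-2\cdot 0}=\sqrt 2$, which bounds $d([q_1],[q_2])$ from above.

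The concrete choice is to define two elements of $\tilde\Gamma$ that concentrate everything into complementary halves of $I$. Set
\begin{equation*}
\gamma_1(t)=\begin{cases} 2t & 0\leq t\leq 1/2\\ 1 & 1/2\leq t\leq 1\end{cases}
\qquad
\gamma_2(t)=\begin{cases} 0 & 0\leq t\leq 1/2\\ 2t-1 & 1/2\leq t\leq 1.\end{cases}
\end{equation*}
Each $\gamma_i$ is absolutely continuous, sends $0\mapsto 0$ and $1\mapsto 1$, and has nonnegative derivative a.e., so $\gamma_1,\gamma_2\in\tilde\Gamma$. By direct calculation, $q_1*\gamma_1$ vanishes a.e.\ on $[1/2,1]$ and equals $\sqrt 2\,q_1(2t)$ on $[0,1/2]$, while $q_2*\gamma_2$ vanishes a.e.\ on $[0,1/2]$ and equals $\sqrt 2\,q_2(2t-1)$ on $[1/2,1]$. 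Consequently $\langle q_1*\gamma_1,q_2*\gamma_2\rangle=0$, and both representatives still lie in $U(I,\reals^N)$ since the action of $\tilde\Gamma$ preserves the $L^2$ inner product.

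To conclude, we invoke Lemma \ref{orbinclude}, which guarantees $q\tilde\Gamma\subset[q]$ for every $q\in L^2(I,\reals^N)$; hence $q_1*\gamma_1\in[q_1]$ and $q_2*\gamma_2\in[q_2]$. Since the quotient metric is defined as the infimum of $d(w_1,w_2)$ over $w_i\in[q_i]$, we obtain
\begin{equation*}
d([q_1],[q_2])\leq d(q_1*\gamma_1,q_2*\gamma_2)=\sqrt{\|q_1*\gamma_1\|^2+\|q_2*\gamma_2\|^2-2\langle q_1*\gamma_1,q_2*\gamma_2\rangle}=\sqrt 2.
\end{equation*}
There is no real obstacle here; the only subtle point is the appeal to Lemma \ref{orbinclude}, which legitimizes the use of reparametrizations in $\tilde\Gamma\setminus\Gamma$ (like $\gamma_1$ and $\gamma_2$, which are constant on a subinterval) as bona fide orbit elements in the closed-up sense.
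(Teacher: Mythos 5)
Your proof is correct and takes essentially the same approach as the paper: both arguments pick two elements of $\tilde\Gamma$ that compress $q_1$ and $q_2$ into complementary halves of $I$ (the paper merely swaps which $\gamma$ does which), observe that the resulting inner product vanishes, and conclude $d([q_1],[q_2])\leq\sqrt 2$. Your explicit appeal to Lemma \ref{orbinclude} to justify $q_i*\gamma_i\in[q_i]$ for $\gamma_i\in\tilde\Gamma\setminus\Gamma$ is a welcome bit of added rigor that the paper leaves implicit.
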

\begin{proof} Define two elements $\gamma_1,\gamma_2\in\tilde\Gamma$ as follows:
\begin{equation*}
\gamma_1(t)=
\begin{cases}
0& \text{for $0\leq t\leq .5$}
\\
2t-1& \text{for $.5<t\leq 1$}
\end{cases}
\end{equation*}
\begin{equation*}
\gamma_2(t)=
\begin{cases}
2t& \text{for $0\leq t\leq .5$}
\\
1& \text{for $.5<t\leq 1$}
\end{cases}
\end{equation*}
Then an easy computation shows that $\langle q_1*\gamma_1,q_2*\gamma_2\rangle=0$, so $d(q_1*\gamma_1,q_2*\gamma_2)=\sqrt{2}$. The corollary follows.
\end{proof}

For $U(I,\reals)$, it's interesting to note that this maximum distance is actually achieved by $[q_1]$ and $[q_2]$, where $q_1(t)\equiv1$ and $q_2(t)\equiv-1$. It is also true that these are the {\it only} two points in the image of $U(I,\reals)$ in ${\mathcal S}(I,\reals)$ that achieve this maximum distance! This follows from the material in Sections 7 and 8 for step functions, and then for arbitrary functions by the fact that the step functions are dense.

\section{Optimal Matching for Step Functions $I\to \reals^N$}

Given two elements $[w_1]$ and $[w_2]$ of ${\mathcal S}(I,\reals^N)$, it is a basic problem to calculate the distance between them and to find a minimal geodesic joining them, if such a geodesic exists. The most straightforward way to do this is to find elements $q_1\in[w_1]$ and $q_2\in[w_2]$ such that $d(q_1,q_2)=d([w_1],[w_2])$. In this section, we will prove that if $[w_1]$ and $[w_2]$ are elements of ${\mathcal S}(I,\reals^N)$, and at least one of them is an element of ${\mathcal S}_{st}(I,\reals^N)$, then there exist $q_1\in[w_1]$ and $q_2\in[w_2]$ such that $d(q_1,q_2)=d([w_1],[w_2])$. We will also prove that if both $[w_1]$ and $[w_2]$ are elements of ${\mathcal S}_{st}(I,\reals^N)$, then these representatives $q_1$ and $q_2$ can both be taken to be step functions. We begin with a lemma.

\begin{lemma}\label{BasicConstantMatch}
Let $q\in L^2(I,\reals^N)$, and let $w:I\to\reals^N$ be a constant map, $w(t)=w_0$. Express $I$ as a disjoint union of two measurable sets, $I=A\cup B$, where $A=\{t\in I:q(t)\cdot w_0\geq 0\}$ and $B=\{t\in I:q(t)\cdot w_0< 0\}$.

Then
\begin{equation*}
\sup_{\tilde q\in[q],\tilde w\in[w]}\langle \tilde q, \tilde w\rangle=
\sqrt{\int_A(q(t)\cdot w_0)^2dt}.
\end{equation*}
If $q(t)\cdot w_0=0$ almost everywhere on $I$, then this supremum is $0$, and is realized by any $\tilde q\in[q]$ and $\tilde w\in[w]$. If it is not true that $q(t)\cdot w_0=0$ almost everywhere on $I$, then this supremum is realized by $\tilde q=q$ and $\tilde w=w*\gamma$, where $\gamma(t)=\int_0^tF(u)du$, and $F(u)$ is defined by  

\begin{equation*}
F(u)=
\begin{cases}
0& \text{for $u\in B$}
\\
\frac{(q(u)\cdot w_0)^2}{\int_A(q(u)\cdot w_0)^2du}& \text{for $u\in A$}
\end{cases}
\end{equation*}

\end{lemma}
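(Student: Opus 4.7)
The plan is to reduce the two-sided supremum over $[q]\times[w]$ to a one-sided optimization, identify the closed orbit $[w]$ explicitly (exploiting that $w$ is constant), and then solve the resulting scalar problem via Cauchy--Schwarz.

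First I would exploit the group structure of $\Gamma$. For any $\gamma_1,\gamma_2\in\Gamma$, the isometry property gives $\langle q*\gamma_1,w*\gamma_2\rangle=\langle q,w*(\gamma_2\gamma_1^{-1})\rangle$, so $\sup_{\gamma_1,\gamma_2\in\Gamma}\langle q*\gamma_1,w*\gamma_2\rangle=\sup_{\gamma\in\Gamma}\langle q,w*\gamma\rangle$. Using continuity of the inner product together with the density of $q\Gamma$ in $[q]$ and of $w\Gamma$ in $[w]$, the reduction persists after closing orbits, so the problem becomes $\sup_{\tilde w\in[w]}\langle q,\tilde w\rangle$. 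This already explains the choice $\tilde q=q$ in the claimed optimizer.

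Next I would describe $[w]$ explicitly. Since $w\equiv w_0$, we have $(w*\gamma)(t)=w_0\sqrt{\gamma'(t)}$, so $w\Gamma = w_0\cdot Q(\Gamma)$ inside $L^2(I,\reals^N)$. Using the Section~3 identification that the $L^2$-closure of $Q(\Gamma)$ in $U(I,\reals)$ equals $\{h\in U(I,\reals):h\geq 0\text{ a.e.}\}$, scalar multiplication by $w_0$ yields
\[
[w]=\bigl\{w_0 h : h\in L^2(I,\reals),\;h\geq 0\text{ a.e.},\;\|h\|_2=1\bigr\}.
\]
Setting $f(t):=q(t)\cdot w_0$, the remaining task is $\sup_{h}\int_0^1 f(t)h(t)\,dt$ over such $h$. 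For the upper bound, split the integral along $I=A\cup B$: on $B$, $f<0$ and $h\geq 0$ forces $\int_B fh\leq 0$; on $A$, Cauchy--Schwarz gives $\int_A fh\leq\sqrt{\int_A f^2}\sqrt{\int_A h^2}\leq\sqrt{\int_A f^2}$, yielding the claimed bound. When $\int_A f^2>0$, equality is achieved by choosing $h(t)=f(t)\mathbf{1}_A(t)/\sqrt{\int_A f^2}$, which is non-negative with $\|h\|_2=1$ and realizes equality in Cauchy--Schwarz while contributing nothing on $B$. Setting $\gamma(t):=\int_0^t h(u)^2\,du=\int_0^t F(u)\,du$ makes $\gamma\in\tilde\Gamma$ with $\gamma(0)=0$, $\gamma(1)=1$, and $w*\gamma=w_0 h$; by Lemma~\ref{orbinclude}, $\tilde w:=w*\gamma\in w\tilde\Gamma\subset[w]$, and a direct substitution gives $\langle q,\tilde w\rangle=\int_A f^2/\sqrt{\int_A f^2}=\sqrt{\int_A f^2}$.

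The main obstacle is the precise identification of $[w]$ for a constant map and the care required at the boundary: when $f\equiv 0$ a.e.\ the supremum is trivially zero and attained by any pair, while if $f\leq 0$ a.e.\ but not identically zero the denominator in $F$ vanishes and the value of the supremum (still zero) is attained only when $\{f=0\}$ has positive measure, so that one can take $h=\mathbf{1}_{\{f=0\}}/\sqrt{\mu(\{f=0\})}$. Modulo this borderline situation, the proof reduces to a clean application of Cauchy--Schwarz on the non-negative sector of the orbit.
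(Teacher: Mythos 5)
Your proof is correct and follows essentially the same route as the paper's: reduce to a one-sided optimization via the isometry of the $\Gamma$-action, split the integral over $A$ and $B$, bound the $B$-contribution by zero, and apply Cauchy--Schwarz on $A$, with your explicit description of $[w]$ as $\{w_0 h : h\geq 0\text{ a.e.},\ \|h\|_2=1\}$ being just a repackaging of the paper's $w*\gamma = w_0\sqrt{\gamma'}$. Your observation about the borderline case -- that when $q\cdot w_0\leq 0$ a.e.\ but is not identically zero the formula for $F$ has a vanishing denominator, and the (zero) supremum is attained only if $\{q\cdot w_0 = 0\}$ has positive measure -- is an accurate catch that the paper's ``straightforward calculation'' remark elides.
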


\begin{proof} Since $q\Gamma$ is dense in $[q]$ and $w\Gamma$ is dense in $[w]$ and the group $\Gamma$ acts by isometries, it follows that 
\begin{equation*}
\sup_{\tilde q\in[q],\tilde w\in[w]}\langle \tilde q, \tilde w\rangle=\sup_{\lambda\in\Gamma,\gamma\in\Gamma}\langle q*\lambda,w*\gamma\rangle=\sup_{\gamma\in\Gamma}\langle q,w*\gamma\rangle=\sup_{\gamma\in\tilde\Gamma}\langle q,w*\gamma\rangle.
\end{equation*}
Let $\gamma\in\Gamma$ be arbitrary. Then
\begin{equation*}
\langle q,w*\gamma\rangle=\int_Iq\cdot(w*\gamma)=\int_I(q(t)\cdot w_0)\sqrt{\gamma'(t)}dt
=\int_A(q(t)\cdot w_0)\sqrt{\gamma'(t)}+\int_B(q(t)\cdot w_0)\sqrt{\gamma'(t)}
\end{equation*}
The integral over $B$ is clearly bounded above by $0$, since $q(t)\cdot w_0\leq 0$ for $t\in B$. The integral over $A$ can be bounded using the Cauchy Schwarz inequality to yield:

\begin{equation*}
\leq
\sqrt{\int_A(q(t)\cdot w_0)^2dt}\sqrt{\int_A\gamma'(t)dt}\leq\sqrt{\int_I(q(t)\cdot w_0)^2dt},
\end{equation*}
where the last step uses the fact that $\int_A\gamma'(t)dt\leq\int_I\gamma'(t)dt=1$. A straightforward calculation shows that the upper bound is actually achieved by the $\gamma$ defined in the statement of the lemma. 

\end{proof}

We will need a slightly altered form of Lemma \ref{BasicConstantMatch}. We state it as a corollary.

\begin{corollary}\label{RefinedBasicConstantMatch}
Suppose $H$, $J$, and $K$ are finite closed intervals in $\reals$; denote by $L(J)$ the length of $J$. Let $q\in L^2(H,\reals^N)$, and let $w:J\to\reals^N$ be a constant map, $w(t)=w_0$. Then
\begin{equation*}
\sup_{\lambda,\gamma}\langle q*\lambda, w*\gamma\rangle=
\sqrt{\int_A(q(t)\cdot w_0)^2dt}\sqrt{L(J)}
\end{equation*}
where the supremum is taken over all $\lambda:K\to H$ and $\gamma:K\to J$ such that $\lambda$ and $\gamma$ are absolutely continuous, onto, and weakly increasing, and $A$ is the subset of $H$ on which the function $q(t)\cdot w_0$ is non-negative. Furthermore, this supremum is actually realized by an appropriate choice of $\lambda$ and $\gamma$.
\end{corollary}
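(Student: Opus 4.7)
The plan is to reduce this corollary to Lemma \ref{BasicConstantMatch} by an affine rescaling of the three intervals $H$, $J$, $K$ to the standard interval $I=[0,1]$. Write $\phi_H:I\to H$, $\phi_J:I\to J$, $\phi_K:I\to K$ for the affine orientation-preserving bijections. Given any admissible pair $\lambda:K\to H$ and $\gamma:K\to J$ (absolutely continuous, onto, weakly increasing), set
\begin{equation*}
\hat\lambda = \phi_H^{-1}\circ\lambda\circ\phi_K,\qquad \hat\gamma = \phi_J^{-1}\circ\gamma\circ\phi_K,
\end{equation*}
both of which lie in $\tilde\Gamma$; this correspondence is a bijection between admissible pairs on the $K/H/J$ side and pairs in $\tilde\Gamma\times\tilde\Gamma$. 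Also define $\hat q \in L^2(I,\reals^N)$ by $\hat q(s) = \sqrt{L(H)}\, q(\phi_H(s))$ and the constant $\hat w_0 = \sqrt{L(J)}\, w_0$.

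Next I would perform the bookkeeping change of variables to show that
\begin{equation*}
\langle q*\lambda,\, w*\gamma\rangle_{L^2(K)} \;=\; \langle \hat q * \hat\lambda,\, \hat w * \hat\gamma\rangle_{L^2(I)}
\end{equation*}
where $\hat w(s)\equiv \hat w_0$. Substituting $t=\phi_K(s)$ and using $\lambda'(\phi_K(s)) = \hat\lambda'(s)L(H)/L(K)$ and $\gamma'(\phi_K(s))=\hat\gamma'(s)L(J)/L(K)$, the Jacobian factor $L(K)$ and the factors $\sqrt{L(H)}$, $\sqrt{L(J)}$ absorbed into $\hat q$ and $\hat w_0$ collapse exactly so that the inner product equals $\int_0^1 \hat q(\hat\lambda(s))\cdot \hat w_0\,\sqrt{\hat\lambda'(s)\hat\gamma'(s)}\,ds$, as desired.

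Under this dictionary, Lemma \ref{BasicConstantMatch} applied to $\hat q$ and the constant $\hat w_0$ on $I$ gives
\begin{equation*}
\sup_{\hat\lambda,\hat\gamma\in\tilde\Gamma}\langle \hat q*\hat\lambda,\hat w*\hat\gamma\rangle \;=\; \sqrt{\int_{\hat A}(\hat q(s)\cdot \hat w_0)^2\,ds},
\end{equation*}
where $\hat A=\{s\in I:\hat q(s)\cdot\hat w_0\geq 0\}=\phi_H^{-1}(A)$. A final change of variables $s=\phi_H^{-1}(t)$ shows $\int_{\hat A}(\hat q(s)\cdot\hat w_0)^2 ds = L(J)\int_A(q(t)\cdot w_0)^2 dt$, yielding the claimed supremum. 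The fact that the supremum is attained in the corollary follows from its being attained in the lemma by the explicit $\gamma$ described there, transported back through the bijection $\hat\gamma\leftrightarrow\gamma$.

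The only mildly delicate step is the change-of-variables bookkeeping (keeping track of the three length factors $L(H),L(J),L(K)$ and the derivative ratios), but none of it is a genuine obstacle; everything cancels by design. The density argument $q\Gamma$ dense in $[q]$ used in the lemma is not needed here because we only care about the supremum over admissible reparametrization pairs, which is already preserved by the rescaling.
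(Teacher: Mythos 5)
Your proposal is correct and follows exactly the strategy the paper itself sketches (the paper omits the details, saying only ``transform each of the three intervals into $I$ using linear bijections, then use integration by substitution and apply Lemma~\ref{BasicConstantMatch}''). Your bookkeeping — the conjugation $\hat\lambda=\phi_H^{-1}\circ\lambda\circ\phi_K$, $\hat\gamma=\phi_J^{-1}\circ\gamma\circ\phi_K$, the rescaled data $\hat q=\sqrt{L(H)}\,q\circ\phi_H$ and $\hat w_0=\sqrt{L(J)}\,w_0$, and the identity $\langle q*\lambda,w*\gamma\rangle_{L^2(K)}=\langle\hat q*\hat\lambda,\hat w*\hat\gamma\rangle_{L^2(I)}$ — checks out, and the final change of variables giving the factor $\sqrt{L(J)}$ is correct.
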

\begin{proof}
We omit the details of this routine proof. The idea is just to transform each of the three intervals into $I$ using linear bijections, and then use integration by substitution and apply Lemma \ref{BasicConstantMatch}.

\end{proof}
\begin{thm}\label{StepNonStep}
Let $[q]\in{\mathcal S}(I,\reals^N)$ and $[w]\in{\mathcal S}_{st}(I,\reals^N)$. Then there exist $\tilde q\in[q]$ and $\tilde w\in[w]$ such that $d(\tilde q,\tilde w)=d([q],[w])$.
\end{thm}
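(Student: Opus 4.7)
The plan is to reduce this infinite-dimensional matching problem to maximizing a continuous function on a compact finite-dimensional simplex, and then invoke the extreme value theorem. I will begin by replacing the distance with the $L^2$ inner product: since $\tilde\Gamma$ acts by isometries, the norms $\|\tilde q\|$ and $\|\tilde w\|$ are constant on $[q]$ and $[w]$ respectively, so minimizing $d(\tilde q,\tilde w)$ is equivalent to maximizing $\langle \tilde q,\tilde w\rangle$. I will fix a step-function representative $w \in [w]$ with values $w_1,\ldots,w_n \in \reals^N$ on consecutive subintervals $[t_{i-1},t_i]$ of $I$ of lengths $l_i = t_i - t_{i-1}$. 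By Lemma \ref{orbinclude} and continuity of the inner product, the supremum of $\langle \tilde q,\tilde w\rangle$ over $[q]\times[w]$ equals $\sup_{\lambda,\gamma \in \tilde\Gamma}\langle q*\lambda,\,w*\gamma\rangle$.

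Next I would parametrize candidate pairs $(\lambda,\gamma)$ by breakpoints. Any $\gamma\in\tilde\Gamma$ determines a partition $0=s_0\leq s_1\leq \cdots\leq s_n=1$ with $\gamma(s_i)=t_i$, and $\lambda$ sends these to $0=a_0\leq a_1\leq \cdots\leq a_n=1$ via $a_i=\lambda(s_i)$. The inner product decomposes as
\begin{equation*}
\langle q*\lambda,\,w*\gamma\rangle = \sum_{i=1}^n \int_{s_{i-1}}^{s_i} q(\lambda(s))\cdot w_i\,\sqrt{\lambda'(s)\gamma'(s)}\,ds.
\end{equation*}
With $\{a_i\}$ fixed, I will apply Corollary \ref{RefinedBasicConstantMatch} on each piece (common domain $[s_{i-1},s_i]$, target intervals $[a_{i-1},a_i]$ on the $q$-side and $[t_{i-1},t_i]$ on the $w$-side) to obtain
\begin{equation*}
\sup_{\lambda_i,\gamma_i}\int_{s_{i-1}}^{s_i} q(\lambda_i(s))\cdot w_i\,\sqrt{\lambda_i'(s)\gamma_i'(s)}\,ds = \sqrt{l_i}\,\sqrt{\int_{A_i}(q(t)\cdot w_i)^2\,dt},
\end{equation*}
where $A_i = \{t\in[a_{i-1},a_i] : q(t)\cdot w_i\geq 0\}$, with the supremum realized on each piece. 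The interior breakpoints $\{s_i\}$ drop out of the bound, and the piecewise optimal $\lambda_i,\gamma_i$ glue to global elements of $\tilde\Gamma$ since adjacent pieces share endpoint values.

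This reduces the problem to maximizing
\begin{equation*}
F(a_1,\ldots,a_{n-1}) = \sum_{i=1}^n \sqrt{l_i}\,\sqrt{\int_{A_i(a_{i-1},a_i)}(q(t)\cdot w_i)^2\,dt}
\end{equation*}
over the compact simplex $\Delta = \{(a_1,\ldots,a_{n-1}) : 0\leq a_1\leq \cdots \leq a_{n-1}\leq 1\}$. Continuity of $F$ on $\Delta$ is an immediate consequence of absolute continuity of the Lebesgue integral in its endpoints of integration. The extreme value theorem then supplies an optimizer $(a_1^*,\ldots,a_{n-1}^*)\in\Delta$, and substituting back into the piecewise construction yields explicit $\tilde q \in [q]$ and $\tilde w \in [w]$ achieving the supremum, and hence $d(\tilde q,\tilde w) = d([q],[w])$.

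The main obstacle I anticipate is a pair of closure issues. First, I must verify that the supremum over $[q]\times[w]$ is in fact captured by pairs of the form $(q*\lambda,\,w*\gamma)$ with $\lambda,\gamma\in\tilde\Gamma$; the $w$-side is immediate from $w\tilde\Gamma\subset[w]$, but on the $q$-side more care is needed since $[q]=q\tilde\Gamma$ only when $q^{-1}(0)$ has measure zero (Corollary \ref{generalRnorbitstructure}), and in the general case one must approximate by sequences in $q\Gamma$ while controlling the inner product. Second, degenerate partitions (where some $a_i=a_{i-1}$ or $s_i=s_{i-1}$ at the optimizer) must be absorbed cleanly into adjacent non-degenerate pieces so that the piecewise optimizers furnished by Corollary \ref{RefinedBasicConstantMatch} can be glued into globally valid elements of $\tilde\Gamma$. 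Reconciling the compactification of $\Delta$ with the explicit reparametrizations produced by the corollary is the technical heart of the argument.
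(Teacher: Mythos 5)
Your proposal is correct and matches the paper's proof essentially step for step: reduce to maximizing the $L^2$ inner product, parametrize by breakpoints ranging over a compact simplex, bound each piece by Corollary \ref{RefinedBasicConstantMatch}, take a maximizer via the extreme value theorem, and glue the piecewise optimal reparametrizations. The two closure concerns you flag at the end are real but quickly dispatched, and the paper in fact sidesteps the first one up front by passing to unit-speed (constant-norm) representatives via Theorem \ref{arclengthpar}, which forces $q^{-1}(0)$ and $w^{-1}(0)$ to have measure zero so that Corollary \ref{generalRnorbitstructure} gives $[q]=q\tilde\Gamma$ and $[w]=w\tilde\Gamma$ outright; and for the second, if some $a_{i-1}=a_i$ then Corollary \ref{RefinedBasicConstantMatch} still applies with $H$ a point (both sides vanish, $\lambda_i$ constant), so the glued $\lambda,\gamma$ still lie in $\tilde\Gamma$ as long as you fix the common domains $K_i$ to be nondegenerate, e.g.\ of equal length as in the paper.
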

\begin{proof}
Assume our orbit representatives $q$ and $w$ correspond to unit speed parametrized curves, so that $|q(t)|$ and $|w(t)|$ are constant  (a.e.) in $I$. (These representatives exist by Theorem \ref{arclengthpar}.) It follows from this that $w$ is a step function. This is because, since $[w]\in{\mathcal S}_{st}(I,\reals^N)$, $w$ can only assume a finite sequence of different directions. Since its magnitude is constrained to be constant everywhere, it follows that $w$ assumes only a finite sequence of different values. Hence, there exists a finite sequence of real numbers $0=s_0<s_1<\dots<s_k=1$ and corresponding finite sequence $w_1,w_2,\dots,w_k$ of vectors in $\reals^N$ such that for all $j\in\{1,\dots,k\}$, $w(t)=w_j$ for all $t\in(s_{j-1},s_j)$. Define a set $T\in\reals^{k+1}$ by 
$T=\{(t_0,\dots,t_{k}):0=t_0\leq t_1\leq t_2\leq\dots\leq t_{k}=1\}$. Clearly $T$ is compact. Now define a function $M:T\to \reals$ by 
$$M(t_0,\dots,t_k)=\sum_{j=1}^k\sqrt{\int_{t_{j-1}}^{t_{j}}(q(t)\cdot w_j)^2 dt}\sqrt{s_j-s_{j-1}}.$$ This function is obviously continuous as a function of $(t_0,\dots,t_k)$; since $T$ is compact, $M$ attains a maximum at some element $(\tilde t_0,\dots,\tilde t_k)\in T$. Let $\tilde M=M(\tilde t_0,\dots,\tilde t_k)$. We will show first that there exists $\tilde q\in[q]$ and $\tilde w\in[w]$ such that $\tilde M=\langle \tilde q,\tilde w\rangle$; we will then show that it is the maximum possible value of all such inner products. This will complete the proof of the theorem.

Fix $j\in\{1,\dots,k\}$. In order to apply Corollary \ref{RefinedBasicConstantMatch}, let $H=[\tilde t_{j-1},\tilde t_j]$, $J=[s_{j-1},s_j]$, and $K=[\frac{j-1}{k},\frac{j}{k}]$.  Then Corollary \ref{RefinedBasicConstantMatch} tells us that there exist $\lambda_j:K\to H$ and $\gamma_j:K\to J$ (where $\lambda_j$ and $\gamma_j$ are onto and absolutely continuous) such that
$$\int_K ((q|H)*\lambda_j)(u)\cdot ((w|J)*\gamma_j)(u) du=\sqrt{\int_{t_{j-1}}^{t_{j}}(q(t)\cdot w_j)^2 dt}\sqrt{s_j-s_{j-1}},$$ and that this integral is the maximum possible over all such $\lambda_j$ and $\gamma_j$. Since for each $j$, $\lambda_j(\frac{j-1}{k})=\lambda_{j-1}(\frac{j-1}{k})$ and $\gamma_j(\frac{j-1}{k})=\gamma_{j-1}(\frac{j-1}{k})$, it follows that we can glue together the $\lambda_j$'s to form a single $\tilde\lambda:I\to I$ and can also glue together the $\gamma_j$'s to form a single $\tilde\gamma:I\to I$ such that $\langle q*\tilde\lambda, w*\tilde\gamma\rangle=\tilde M$. This shows that $\tilde M$ is realized as an inner product of a pair of orbit representatives. 

We now show that $\tilde M$ gives the maximum value of the inner product, for all orbit representatives. Suppose $\tilde q\in[q]$ and $\tilde w\in[w]$. By Corollary \ref{generalRnorbitstructure}, there exist $\lambda\in\tilde\Gamma$ and $\gamma\in\tilde\Gamma$ such that $\tilde q=q*\lambda$ and $\tilde w=w*\gamma$. Since $\gamma$ is onto, for each $j$ we can choose $u_j\in\gamma^{-1}(s_j)$, and let $t_j=\lambda(u_j)$. It follows that for each $j$, $\lambda([u_{j-1},u_j])=[t_{j-1},t_j]$, and $\gamma([u_{j-1},u_j])=[s_{j-1},s_j]$. By Corollary \ref{RefinedBasicConstantMatch}, we may conclude that 
$$\int_{u_{j-1}}^{u_j} ((q|[t_{j-1},t_j])*\lambda)(u)\cdot ((w|[s_{j-1},s_j])*\gamma)(u) du\leq\sqrt{\int_{t_{j-1}}^{t_{j}}(q(t)\cdot w_j)^2 dt}\sqrt{s_j-s_{j-1}}.$$
Summing over all $j$ then gives
$$\int_0^1(q*\lambda)(u)\cdot(w*\gamma)(u)du\leq M(t_0,\dots,t_k)\leq\tilde M.$$ Since maximizing the $L^2$ inner product is the same as minimizing the distance, this completes the proof of the theorem.

\end{proof}

\begin{thm}\label{stepstepstep}
If $q,w\in L^2(I,\reals^N)$ are both step functions, then there exist piecewise linear functions $\lambda,\gamma\in\tilde\Gamma$ such that 
$$\langle q*\lambda,w*\gamma\rangle=\sup_{\tilde q\in[q],\tilde w\in[w]}\langle \tilde q, \tilde w\rangle.$$
\end{thm}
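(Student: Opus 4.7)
The plan is to follow the proof of Theorem \ref{StepNonStep} step by step, keeping careful track of piecewise linearity throughout. Let $0=s_0<s_1<\dots<s_k=1$ be the partition underlying the step function $w$, with $w(t)=w_j$ on $(s_{j-1},s_j)$. Form the continuous function
\[
M(t_0,\dots,t_k)=\sum_{j=1}^k \sqrt{\int_{t_{j-1}}^{t_j}(q(t)\cdot w_j)^2\,dt}\,\sqrt{s_j-s_{j-1}}
\]
on the compact set $T$ from the proof of Theorem \ref{StepNonStep}, and pick a maximizer $(\tilde t_0,\dots,\tilde t_k)$ with maximum value $\tilde M$.

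For each $j$, I apply Corollary \ref{RefinedBasicConstantMatch} with $H_j=[\tilde t_{j-1},\tilde t_j]$, $J_j=[s_{j-1},s_j]$, and $K_j=[(j-1)/k,j/k]$. The key observation is that since $q$ is a step function, so is $q|_{H_j}$. Tracing the proof of the corollary, which reduces to Lemma \ref{BasicConstantMatch} by linear changes of variable on these three intervals, I take $\lambda_j:K_j\to H_j$ to be the linear bijection (constant slope $L(H_j)/L(K_j)$); then $q\circ\lambda_j$ is again a step function on $K_j$, and the formula from Lemma \ref{BasicConstantMatch} produces an optimal $\gamma_j:K_j\to J_j$ whose derivative is proportional to $\bigl((q\circ\lambda_j)(u)\cdot w_j\bigr)^2$ on the subset of $K_j$ where this quantity is non-negative, and zero elsewhere. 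This derivative is piecewise constant, so $\gamma_j$ is piecewise linear, and both $\lambda_j$ and $\gamma_j$ are absolutely continuous, weakly increasing, and onto.

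Gluing the $\lambda_j$'s at the endpoints $(j-1)/k$ (where $\lambda_{j-1}((j-1)/k)=\tilde t_{j-1}=\lambda_j((j-1)/k)$ ensures continuity) and similarly the $\gamma_j$'s yields piecewise linear $\lambda,\gamma\in\tilde\Gamma$ with $\langle q*\lambda,w*\gamma\rangle=\tilde M$. For the upper bound, the second half of the proof of Theorem \ref{StepNonStep} carries over unchanged: given any $\lambda',\gamma'\in\tilde\Gamma$, pick $u_j\in(\gamma')^{-1}(s_j)$ and set $t_j=\lambda'(u_j)$, so that $\lambda'$ and $\gamma'$ send $[u_{j-1},u_j]$ onto $[t_{j-1},t_j]$ and $[s_{j-1},s_j]$ respectively; Corollary \ref{RefinedBasicConstantMatch} applied on each sub-interval and summed gives $\langle q*\lambda',w*\gamma'\rangle\leq M(t_0,\dots,t_k)\leq\tilde M$. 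Since $q\Gamma\subset q\tilde\Gamma$ is dense in $[q]$ (and similarly for $w$), the supremum over $[q]\times[w]$ equals $\sup_{\lambda',\gamma'\in\tilde\Gamma}\langle q*\lambda',w*\gamma'\rangle\leq\tilde M$, so the PL pair $(\lambda,\gamma)$ realizes the supremum.

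I expect the main obstacle to be extracting the piecewise linear structure from Corollary \ref{RefinedBasicConstantMatch}, whose proof is omitted in the paper. The reduction to Lemma \ref{BasicConstantMatch} proceeds by linear bijections among $H_j,J_j,K_j$, which preserve step and PL structure; the explicit formula in Lemma \ref{BasicConstantMatch} (giving $\tilde q=q$ and a $\gamma$ with derivative $\propto(q\cdot w_0)^2/\int_A(q\cdot w_0)^2$) then translates cleanly to a linear $\lambda_j$ paired with a piecewise linear $\gamma_j$. The remaining verifications (continuity at the glue points, weak monotonicity of the glued maps, and the upper-bound estimate) proceed exactly as in Theorem \ref{StepNonStep}.
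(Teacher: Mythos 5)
Your proof is correct and follows the same approach as the paper's own (very terse) proof: both trace the constructions in Lemma~\ref{BasicConstantMatch}, Corollary~\ref{RefinedBasicConstantMatch}, and Theorem~\ref{StepNonStep}, noting that when $q$ is a step function the $\gamma$ in Lemma~\ref{BasicConstantMatch} has piecewise-constant derivative and is hence piecewise linear, that the linear changes of variable in Corollary~\ref{RefinedBasicConstantMatch} preserve piecewise linearity, and that gluing the blockwise reparametrizations in Theorem~\ref{StepNonStep} preserves it again. Your version spells out the details more explicitly (e.g., that $\lambda_j$ can be taken literally linear, and that the upper bound can be run via density of $q\tilde\Gamma$ in $[q]$ rather than invoking Corollary~\ref{generalRnorbitstructure}), but it is not a different argument.
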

\begin{proof} In the statement of Lemma \ref{BasicConstantMatch}, note that if $q$ is a step function, then the function we integrate to get $\gamma$ is also a step function. It follows that $\gamma$ is piecewise linear. In Corollary \ref{RefinedBasicConstantMatch} (still assuming that $q$ is a step function), the reparametrizing functions are obtained from the ones in Lemma \ref{BasicConstantMatch} by composing with linear functions; hence the reparametrizing functions are still piecewise linear. Finally, in Theorem \ref{StepNonStep}, the optimal reparametrizing functions are obtained by gluing together reparametrizing functions of the type formed in Corollary \ref{RefinedBasicConstantMatch}; gluing together piecewise linear functions results in more piecewise linear functions.

\end{proof}  


\section{Preliminaries on Finding a Precise Optimal Matching For Piecewise Linear Functions}

Let $f_1$ and $f_2$ be two continuous, piecewise linear functions $I\to\reals^N$ and let $q_1,q_2\in L^2(I,\reals^N)$ be their SRVFs. We will develop an algorithm which will produce a pair of optimal representatives for $[q_1]$ and $[q_2]$, i.e., $L^2$ functions $\tilde q_1\in[q_1]$ and $\tilde q_2\in[q_2]$ such that $d(\tilde q_1,\tilde q_2)=d([q_1],[q_2])$. Assume that $q_1$ and $q_2$ have the property that the set on which each of them vanishes has measure 0. (If this is not true, than we can replace them by elements of $[q_1]$ and $[q_2]$ that have this property, using Theorem \ref{arclengthpar}.) According to Theorem \ref{generalRnorbitstructure}, these optimal representatives will be of the form $\tilde q_1=q_1*\gamma_1$ and $\tilde q_2 =q_2*\gamma_2$, where $\gamma_1,\gamma_2\in\tilde\Gamma$. We call such a pair $(\gamma_1,\gamma_2)$ an {\it optimal matching} for $f_1, f_2$ (or for $q_1,q_2$).

Since $f_1$ and $f_2$ are piecewise linear, we know that there are subdivisions $0=s_0<s_1<\dots<s_m=1$ and $0=t_0<t_1<\dots<t_n=1$ such that $f_1$ is linear on each subinterval $[s_{i-1},s_i]$ and $f_2$ is linear on each subinterval $[t_{j-1},t_j]$. As a result, we know that $q_1$ is constant on each open interval $(s_{i-1},s_i)$ and $q_2$ is constant on each $(t_{j-1},t_j)$. In general, $q_1$ and $q_2$ are not defined on the endpoints of these intervals, since $f_1$ and $f_2$ are not differentiable at these endpoints. For each $i=1,\dots,m$, let $u_i=q_1((s_{i-1},s_i))$ and for each $j=1,\dots,n$, let $v_j=q_2((t_{j-1},t_j))$. We then define an $n\times m$ matrix $W$, called the {\it weight matrix}, by $W_{ij}=u_i\cdot v_j$. (The dot product here is the ordinary inner product in $\reals^N$.) 

A {\it matching} of $f_1$ and $f_2$ is any pair of reparametrizations $\gamma_1,\gamma_2\in\tilde\Gamma$. Such a pair represents a matching in the sense that for each $z\in I$, the point $f_1(\gamma_1(z))$ on the curve parametrized by $f_1$ is ``matched" to the point $f_2(\gamma_2(z))$ on the curve parametrized by $f_2$. Note that because $\gamma_1$ and $\gamma_2$ are only weakly increasing, this matching does not give a 1-1 correspondence between the points on these two curves. We can assemble $\gamma_1$ and $\gamma_2$ into a single function $\gamma:I\to I\times I$ defined by $\gamma(z) =(\gamma_1(z),\gamma_2(z))$. This function can be thought of as a parametrized curve in $I\times I$ that starts at $(0,0)$ and ends at $(1,1)$. Because $\gamma_1$ and $\gamma_2$ are weakly increasing, this curve can only move vertically upward, horizontally to the right, or in some diagonal direction towards the upper right. We define a {\it vertex} of $I\times I$ to be a point of the form $(s_i,t_j)$, a {\it horizontal gridline} to be a line of the form $t=t_j$ ($i=0,1,\dots,n$), and a {\it vertical gridline} to be a line of the form $s=s_i$ ($j=0,1,\dots,m$). We define the $ij$-block, $G_{ij}$, by $G_{ij}=[s_{i-1},s_i]\times[t_{j-1},t_j]$. Because of the weakly increasing nature of $\gamma_1$ and $\gamma_2$, it is clear that $\gamma^{-1}(G_{ij})$ is always a closed subinterval of $I$. If $\gamma$ is linear and non-constant on an interval $[a,b]$, we define the {\it slope} of $\gamma$ on this interval to be the value of $\gamma_2'/\gamma_1'$. On any such interval, this slope will be $0$, positive, or $\infty$.

\begin{figure}[h]
\begin{center}
\includegraphics[scale=0.45]{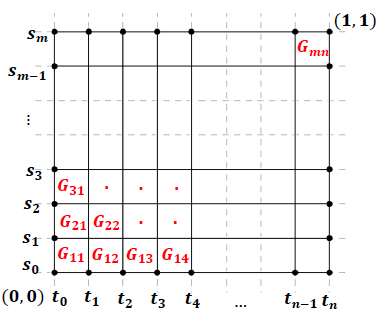}
\caption{The grid}
\label{fig1}
\end{center}
\end{figure}

Given a matching $\gamma$, we define a {\it segment} of $\gamma$ to be the restriction of $\gamma$ to some closed subinterval of $I$. We now define two specific types of segment. 

{\bf Definition of {\it P-segment}:} (Note that this is a long definition! It includes all the statements up until the definition of an N-segment.)  A P-segment is a restriction of $\gamma$ to an interval $[a,b]\subset I$, which has the following properties:

\begin{enumerate}
\item{} $\gamma|_{[a,b]}$ is piecewise linear and injective.
\item{} $\gamma(a)=(s_{i_0-1},t_{j_0-1})$ and $\gamma(b)=(s_{i_1},t_{j_1})$ are vertices, with $i_0\leq i_1$ and $j_0\leq j_1$, but for all $z\in(a,b)$, $\gamma(z)$ is not a vertex.  Furthermore, $W_{i_0,j_0}>0$ and $W_{i_1,j_1}>0$.

\item{} For all blocks $G_{ij}$ such that $\gamma^{-1}(G_{ij})\subset[a,b]$, the restriction of $\gamma$ to $\gamma^{-1}(G_{ij})$ is linear. We define $H_{i,j}$ to be the slope of the segment as it passes through $G_{i,j}$.
\item{} Suppose $\gamma^{-1}(G_{ij})=[c,d]\subset[a,b]$, where $c<d$. If $W_{ij}\leq 0$, then either $H_{i,j}=0$ or $H_{i,j}=\infty$. Visually, this says that the parametrized path $\gamma$ is either vertical or horizontal as it traverses $G_{ij}$. More precisely, if $\gamma$ enters such a $G_{i,j}$ through the left hand vertical edge, then $H_{ij}=0$, while if $\gamma$ enters such a $G_{ij}$ through the lower horizontal edge, then $H_{ij}=\infty$.  If $W_{ij}>0$, then $H_{i,j}$  is not equal to either 0 or $\infty$.

\end{enumerate}

To understand the remaining properties required of a P-segment, note that it begins at the vertex $(s_{i_0-1},t_{j_0-1})$ and passes through the block $G_{i_0,j_0}$ in a linear fashion with slope $H_{i_0,j_0}$, which is equal neither to 0 nor to $\infty$ by the previous items.
Up to reparametrization, the remaining portion of the P-segment is completely determined by the initial vertex $(s_{i_0-1},t_{j_0-1})$, and the initial slope as the segment passes through $G_{i_0,j_0}$. To understand this determination we will describe how the slope $H_{i,j}$ is required to change as the P-segment passes through a gridline from one block to another. First, suppose the segment passes through a vertical gridline from $G_{i,j}$ to $G_{i+1,j}$. There are then three cases to consider:
\begin{enumerate} 
\item{} Both $W_{i,j}$ and $W_{i+1,j}$ are greater than 0. Then the slopes are related as follows:
\begin{equation}
\frac{H_{i+1,j}}{H_{i,j}} = \left(\frac{W_{i+1,j}}{W_{i,j}}\right)^2 
\end{equation}
\item{} $W_{i+1,j}\leq 0$. Then $H_{i+1,j}=0$.
\item{} $W_{i,j}\leq 0$ while $W_{i+1,j}> 0$. By one of the above conditions, we know that $H_{i,j}=0$. To determine $H_{i+1,j}$, we must find the largest value of $k\leq i$ for which for which $W_{k,j}>0$. (This corresponds to the last block $G_{k,j}$ that the segment passed through with non-zero slope.) Using this value of $k$, $H_{i+1,j}$ must then satisfy
\begin{equation}
\frac{H_{i+1,j}}{H_{k,j}} = \left(\frac{W_{i+1,j}}{W_{k,j}}\right)^2. 
\end{equation}

\end{enumerate}
Now, suppose the segment passes through a horizontal gridline from $G_{i,j}$ to $G_{i,j+1}$. The three cases are completely analogous to the cases of the vertical gridline:

\begin{enumerate} 
\item{} Both $W_{i,j}$ and $W_{i,j+1}$ are greater than 0. Then the slopes are related as follows:
\begin{equation}
\frac{H_{i,j+1}}{H_{i,j}} = \left(\frac{W_{i,j}}{W_{i,j+1}}\right)^2 
\end{equation}
\item{} $W_{i,j+1}\leq 0$. Then $H_{i,j+1}=\infty$.
\item{} $W_{i,j}\leq 0$ while $W_{i,j+1}> 0$. By one of the above conditions, we know that $H_{i,j}=\infty$. To determine $H_{i,j+1}$, we must find the largest value of $k\leq j$ for which for which $W_{i,k}>0$. (This corresponds to the last block $G_{i,k}$ that the segment passed through with non-infinite slope.) Using this value of $k$, $H_{i,j+1}$ must then satisfy
\begin{equation}
\frac{H_{i,j+1}}{H_{i,k}} = \left(\frac{W_{i,k}}{W_{i,j+1}}\right)^2. 
\end{equation}

\end{enumerate}
This concludes the definition of a P-segment!

\begin{figure}[h]
\begin{center}
\includegraphics[scale=0.45]{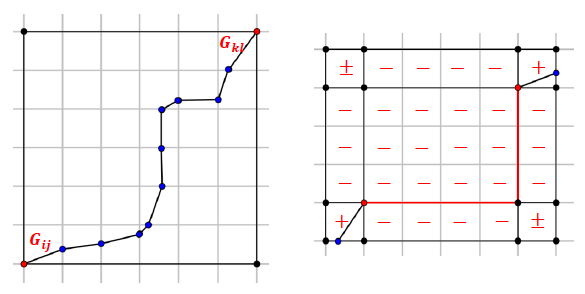}
\caption{Left: P-segment.  Right: N-segment(in red).}
\label{fig1}
\end{center}
\end{figure}

{\bf Definition of {\it N-segment}:}  An N-segment is a restriction of $\gamma$ to an interval $[a,b]\subset I$, which has the following three properties:
\begin{enumerate}
\item{} $\gamma(a)=(s_{i_0},t_{j_0})$ and $\gamma(b)=(s_{i_1},t_{j_1})$ are both vertices, with $i_0\leq i_1$ and $j_0\leq j_1$. 
\item{} The restriction of $\gamma$ to $\left[a,\frac{a+b}{2}\right]$ is linear and runs horizontally from $(s_{i_0},t_{j_0})$ to $(s_{i_1},t_{j_0})$, while the restriction of $\gamma$ to $\left[\frac{a+b}{2},b\right]$ is also linear and runs vertically from $(s_{i_1},t_{j_0})$ to $(s_{i_1},t_{j_1})$. For the special cases in which $i_0=i_1$ or $j_0=j_1$, the entire N-segment is either vertical or horizontal, respectively.
\item{} For $\gamma|_{[a,b]}$ to be an N-segment, there are also the following requirements on certain weights: if $i\in\{i_0+1,\dots,i_1\}$ and $j\in\{ j_0,\dots,j_1+1\}$, then $W_{i,j}\leq 0$. Also, if $i\in\{i_0,\dots,i_1+1\}$ and $j\in\{ j_0+1\dots,j_1\}$, then $W_{i,j}\leq 0$.

\end{enumerate}

\section{Statement and Proof of Main Theorem}
In this section, we state and prove our main result on a canonical form for optimal matchings between piecewise linear curves. 

\begin{thm}\label{MainTheorem}
Let $f_1$ and $f_2$ be piecewise linear functions $I\to\reals^N$. Then there exists an optimal matching $\gamma=(\gamma_1,\gamma_2)$ that has the following properties:
\begin{enumerate}
\item{} $\gamma$ is a sequence of P-segments and N-segments; i.e., there exists a partition $\{0=u_0<u_1<\dots<u_k=1\}$ such that for each $i=1,\dots, k$, $\gamma|_{[u_{i-1},u_{i}]}$ is either a P-segment or an N-segment. 

\item{}$ \gamma$ does not contain two consecutive N-segments.

\item{} Suppose that $\gamma|_{[u_1,u_2]}$ and $\gamma|_{[u_3,u_4]}$ are both P-segments, and suppose that either $u_2=u_3$ or $\gamma|_{[u_2,u_3]}$ is an N-segment. Define $(i_1,j_1)$ and $(i_2,j_2)$ by $(s_{i_1},t_{j_1})=\gamma(u_2)$ and $(s_{i_2},t_{j_2})=\gamma(u_3)$ . Then the final slope of  $\gamma|_{[u_1,u_2]}$ and the initial slope of $\gamma|_{[u_3,u_4]}$ must be related as follows. Let $A=W_{i_1,j_1}$, $B=W_{i_2+1,j_2+1}$, $C=W_{i_1,j_2+1}$, and $D=W_{i_2+1,j_1}$. Then $H_{i_2+1,j_2+1}=\mu^2 H_{i_1,j_1}$ where  

\item{}
\begin{align}
\mu \in \left\{ \begin{array}{ll}
                   \left[ \frac{D^{2}}{AB},\frac{AB}{C^{2}} \right] & \mbox{, if $C>0 , D>0$ } \\                    
 				   \left[ 0, \frac{AB}{C^{2}} \right] & \mbox{, if $D\leq 0 , C>0$ } \\ 				   
                   \left[ \frac{D^{2}}{AB} , \infty \right] & \mbox{, if $D>0 , C \leq 0$ } \\                  
                   \left[ 0,\infty \right] & \mbox{, if $D \leq 0 , C \leq 0$ }
                \end{array} \right. \label{eq12} 
\end{align}
Note that the prescribed $\mu$-interval is empty if $CD>AB$. In that case, there cannot be an optimal matching with one P-segment ending at $(s_{i_1},t_{j_1})$ and the next P-segment beginning at $(s_{i_2},t_{j_2})$
\end{enumerate}

\end{thm}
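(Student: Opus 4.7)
The plan is to start from an optimal piecewise linear matching, whose existence is guaranteed by Theorem~\ref{stepstepstep}, and then perform a sequence of local modifications, none of which decrease the inner product $\langle q_1*\gamma_1, q_2*\gamma_2\rangle$, until the matching has the claimed canonical form. The key observation throughout is that this inner product decomposes block-by-block as $\sum_{i,j} W_{ij}\int_{\gamma^{-1}(G_{ij})}\sqrt{\gamma_1'(t)\gamma_2'(t)}\,dt$, so the analysis reduces to optimizing the behavior in each block and at the gridline crossings between blocks.

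First I would perform block-wise straightening. In each block $G_{ij}$ with $W_{ij}>0$, the Cauchy--Schwarz inequality shows that for fixed entry and exit points on $\partial G_{ij}$, the within-block contribution is maximized when $\gamma$ is linear there, attaining $W_{ij}\sqrt{(\Delta s)(\Delta t)}$; so I would replace the local path by the straight segment. In each block with $W_{ij}\le 0$, the contribution is non-positive and can be forced to exactly $0$ by replacing the local path with a horizontal-then-vertical (or vertical-then-horizontal) traversal between the same entry and exit points. After these replacements $\gamma$ is piecewise linear, travels through positive-weight blocks diagonally and through non-positive-weight blocks along edges; maximal diagonal runs become candidates for P-segments, and the pieces that traverse rectangular regions of non-positive blocks horizontally-then-vertically become N-segments, giving property~(1).

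Next I would derive the slope conditions within each P-segment by a local Cauchy--Schwarz optimization at each internal gridline. For two adjacent $W>0$ blocks $G_{ij}$ and $G_{i+1,j}$, writing the combined contribution as a function of the crossing height $y$ on the shared gridline in the form $W_{ij}\sqrt{a(y-b_1)}+W_{i+1,j}\sqrt{a'(b_2-y)}$ and maximizing gives precisely $H_{i+1,j}/H_{ij}=(W_{i+1,j}/W_{ij})^2$; the remaining P-segment conditions (transitioning into, or out of, a run of non-positive blocks) follow by the same argument with the limiting slope sent to $0$ or $\infty$. Property~(2), that no two N-segments occur consecutively, follows from a direct merging argument: two back-to-back horizontal-vertical paths through a non-positive region can always be rewritten as a single horizontal-then-vertical path between the same endpoints, and the weight inequalities required by the N-segment definition for the combined rectangle are inherited from the two original N-segments.

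The heart of the proof, and the step I expect to be the main obstacle, is property~(3): the $\mu$-interval constraint on the slope ratio between consecutive P-segments separated by either a vertex or an N-segment. The plan is to consider perturbations of the matching at the P-segment junction that trade a small amount of parametrization between the four blocks $A$, $B$, $C$, $D$ near the junction. If $C>0$ one can profitably divert a little of the matching diagonally through block $C$, gaining a positive contribution, and first-order optimality against this perturbation forces $\mu\le AB/C^2$; the symmetric perturbation using block $D$, when $D>0$, yields $\mu\ge D^2/(AB)$. The degenerate cases correspond to one or both of these bounds becoming vacuous. Carefully formalizing these perturbations, checking that they respect the weak-monotonicity constraint on $\gamma$ and the weight conditions on the N-segment, and performing the algebraic optimization to extract the exact thresholds will be the most intricate part of the proof.
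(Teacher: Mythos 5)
Your overall strategy matches the paper's: start from the piecewise-linear optimal matching guaranteed by Theorem~\ref{stepstepstep}, perform local inner-product-preserving modifications (linearize in positive-weight blocks by Cauchy--Schwarz, flatten to edge-paths in non-positive blocks), derive the within-segment slope-transition rules by optimizing the gridline crossing point, and establish property~(3) by a local perturbation through the blocks $C$ and $D$ adjacent to the junction, comparing $E(x,y)$ against $E(0,0)$.

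However, there is a genuine gap in your argument for property~(2). You claim that when two consecutive N-segments are merged into a single horizontal-then-vertical path, ``the weight inequalities required by the N-segment definition for the combined rectangle are inherited from the two original N-segments.'' This is false. If the first N-segment runs from $(s_{i_0},t_{j_0})$ to $(s_{i_1},t_{j_1})$ and the second from $(s_{i_1},t_{j_1})$ to $(s_{i_2},t_{j_2})$ with $i_0<i_1<i_2$ and $j_0<j_1<j_2$, then the merged N-segment from $(s_{i_0},t_{j_0})$ to $(s_{i_2},t_{j_2})$ requires, for instance, $W_{i_0+1,\,j_1+2}\le 0$; that block lies in neither of the rectangular regions whose weights are constrained by the two original N-segments, so nothing is inherited. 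The correct argument is to derive the sign conditions from the optimality of $\gamma$ itself: if some $W_{ij}>0$ in the interior of the merged rectangle, one can detour the path through $G_{ij}$ diagonally to strictly increase the inner product (this is the paper's Lemma~\ref{MostNegativeBlocks}); and for the boundary-strip blocks in the N-segment definition, a more delicate argument is needed that perturbs the endpoint of the adjacent P-segment and inserts a small diagonal detour, using the unbounded derivative of $\sqrt{\cdot}$ near $0$ to show a strict gain (the paper's companion lemma after Lemma~\ref{MostNegativeBlocks}). Note also that the paper arranges property~(2) structurally, by a preliminary vertex-elimination step that retains a vertex only if at least one of its two neighboring sub-paths crosses a positive-weight block interior; this guarantees alternation between Type~I and Type~II pieces from the start, rather than merging after the fact.

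Two smaller points worth tightening: you should justify that after block-wise straightening each Type~I piece really traverses at least one positive-weight block with slope strictly between $0$ and $\infty$ (otherwise its contribution is $0$ and one can strictly improve by inserting a diagonal detour, contradicting optimality); and the ``linear in positive, edge-path in negative'' picture needs the further observation that optimality forces each traversal of a non-positive block to be a single horizontal or single vertical segment (not both), determined by the entering edge, as in the paper's Lemma~\ref{PositiveToNegativeSlope}.
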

\begin{proof} We know by Theorem \ref{stepstepstep} that there exists a piecewise linear optimal matching between $f_1$ and $f_2$. Choose such an optimal matching and call it $\gamma=(\gamma_1,\gamma_2):I\to I\times I$. We may assume that $\gamma$ is injective by replacing it by a constant speed reparametrization. Let $V_0,V_1,\dots, V_M$ be an ordered list of all the vertices through which $\gamma$ passes, starting with $V_0=(0,0)$ and ending with $V_M=(1,1)$. From this list, choose an arbitrary vertex $V_i$ (with $i\not\in\{0,M\}$). If either the portion of $\gamma$ from $V_{i-1}$ to $V_i$, or the portion of $\gamma$ from $V_i$ to $V_{i+1}$ passes through a point in the interior of some block $G_{k,l}$ with weight $W_{k,l}>0$, then retain $V_i$ in the list. If neither of these portions of $\gamma$ pass through such a point, then drop $V_i$ from the list. Continue this elimination process until no more vertices can be dropped. Renumber the remaining vertices and revise the number $M$ to reflect the number of vertices remaining in the list. The remaining vertices now have the property that for each $i=1,\dots,M-1$, either the segment of $\gamma$ from $V_{i-1}$ to $V_i$, or the segment from $V_i$ to $V_{i+1}$ passes through at least one point in the interior of some block $G_{k,l}$ with weight $W_{k,l}>0$.

For each $i=0,\dots,M-1$, consider the segment of $\gamma$ from $V_i$ to $V_{i+1}$. There are two possibilities:
\begin{enumerate}
\item{}Type I: If this segment of $\gamma$ passes through a point in the interior of some block $G_{k,l}$ with weight $W_{k,l}>0$, we will prove that it can be replaced by a P-segment without affecting the optimality, i.e., without affecting the value of $\int_0^1(q_1*\gamma_1)(u)\cdot (q_2*\gamma_2)(u)du$.
\item{}Type II: If this segment of $\gamma$ does not pass through such a point, then we will prove that it can be replaced by an N-segment, without affecting the optimality, i.e., without affecting the value of $\int_0^1(q_1*\gamma_1)(u)\cdot (q_2*\gamma_2)(u)du$.
\end{enumerate}
\begin{lemma}\label{LinearCalc}
Let $v,w\in\reals^N$ be two vectors, and define two constant functions $q_1:[a,b]\to\reals^N$ and $q_2:[c,d]\to\reals^N$ by $q_1(s)=v$ and $q_2(t)=w$. Let $\alpha<\beta$; define $\gamma_1:[\alpha,\beta]\to[a,b]$ to be the unique linear function such that $\gamma_1(\alpha)=a$ and $\gamma_1(\beta)=b$ and define $\gamma_2:[\alpha,\beta]\to[c,d]$ to be the unique linear function such that $\gamma_2(\alpha)=c$ and $\gamma_2(\beta)=d$. Then $\int_\alpha^\beta(q_1*\gamma_1)(u)\cdot(q_2*\gamma_2)(u) du=(v\cdot w)\sqrt{b-a}\sqrt{d-c}$.
\end{lemma}
\begin{proof} This is an easy calculation since $(q_1*\gamma_1)(u)=v\sqrt{(b-a)/(\beta-\alpha)}$ and $(q_2*\gamma_2)(u)=w\sqrt{(d-c)/(\beta-\alpha)}$ are constant functions!
\end{proof}
\begin{lemma}\label{LinearOptimal}
Let $v,w\in\reals^N$ be two vectors satisfying $v\cdot w>0$ and define two constant functions $q_1:[a,b]\to\reals^N$ and $q_2:[c,d]\to\reals^N$ by $q_1(s)=v$ and $q_2(t)=w$. 
Let $\gamma_1:[\alpha,\beta]\to[a,b]$ and  $\gamma_2:[\alpha,\beta]\to[c,d]$ be surjective absolutely continuous functions with both $\gamma_1'(u)>0$ and $\gamma_2'(u)>0$ almost everywhere for $u\in[\alpha,\beta]$. 
Let $\tilde\gamma_1$ and  $\tilde\gamma_2$ be the unique linear bijections $[\alpha,\beta]\to[a,b]$ and  $[\alpha,\beta]\to[c,d]$, respectively.

Then 
$$\int_\alpha^\beta (q_1*\gamma_1)(u)\cdot (q_2*\gamma_2)(u)du\leq\int_\alpha^\beta (q_1*\tilde\gamma_1)(u)\cdot (q_2*\tilde\gamma_2)(u)du=(v\cdot w)\sqrt{b-a}\sqrt{d-c}.$$
\end{lemma}
\begin{proof}
The main tool here is the Cauchy-Schwarz inequality. Note that for $u\in[\alpha,\beta]$, $(q_1*\gamma_1)(u)=\sqrt{\gamma_1'(u)}v$ and $(q_2*\gamma_2)(u)=\sqrt{\gamma_2'(u)}w$. We then compute:
$$\int_\alpha^\beta (q_1*\gamma_1)(u)\cdot (q_2*\gamma_2)(u)du=\int_\alpha^\beta v\cdot w\sqrt{\gamma_1'(u)}\sqrt{\gamma_2'(u)}du
=v\cdot w\int_\alpha^\beta \sqrt{\gamma_1'(u)}\sqrt{\gamma_2'(u)}du$$
$$\leq v\cdot w\sqrt{\int_\alpha^\beta\gamma_1'(u)du}\sqrt{\int_\alpha^\beta\gamma_2'(u)du}=v\cdot w\sqrt{\gamma_1(\beta)-\gamma_1(\alpha)}\sqrt{\gamma_2(\beta)-\gamma_2(\alpha)}$$
$$=v\cdot w\sqrt{b-a}\sqrt{d-c}$$
where the inequality is just the Cauchy-Schwarz inequality. Finally, note that if we replace each $\gamma_i$ by $\tilde\gamma_i$ for $i=1,2$, then since each $\tilde\gamma_i'$ is a positive constant function, the Cauchy-Schwarz inequality is actually an equality.
\end{proof}
\begin{lemma}\label{OptimalPositiveBlock}
Suppose $\gamma$ passes through a point in the interior of $G_{k,l}$ for which $W_{k,l}>0$. It follows that $\gamma^{-1}(G_{k,l})=[\alpha,\beta]$, where $\alpha<\beta$. If we replace $\gamma|_{[\alpha,\beta]}$ by the unique linear map $\tilde\gamma:[\alpha,\beta]\to G_{k,l}$ that agrees with $\gamma$ at $\alpha$ and $\beta$, then 
$$\int_\alpha^\beta(q_1*\tilde\gamma_1)(u)\cdot (q_2*\tilde\gamma_2)(u)du\geq\int_\alpha^\beta(q_1*\gamma_1)(u)\cdot (q_2*\gamma_2)(u)du.$$ 
Since we are assuming that $\gamma$ is optimal, it follows that this inequality is actually an equality, so we can replace $\gamma|_{[\alpha,\beta]}$ by the linear map $\tilde\gamma$ without affecting its optimality.
\end{lemma}
\begin{proof} Let $a=\gamma_1(\alpha)$, $b=\gamma_1(\beta)$, $c=\gamma_2(\alpha)$, and $d=\gamma_2(\beta)$. Since $\gamma([\alpha,\beta])\subset G_{k,l}$, it follows that $q_1=v$ is constant on $[a,b]$ and $q_2=w$ is constant on $[c,d]$. Also, since $W_{k,l}>0$, we know that $v\cdot w>0$. Then, from Theorem \ref{LinearOptimal} it follows immediately that 
$$\int_\alpha^\beta(q_1*\tilde\gamma_1)(u)\cdot (q_2*\tilde\gamma_2)(u)du\geq\int_\alpha^\beta(q_1*\gamma_1)(u)\cdot (q_2*\gamma_2)(u)du.$$
The rest of the Lemma follows from this.

There is one other small point to consider here; in our proof, we tacitly assumed that $q_1$ and $q_2$ are defined on all of $\gamma_1([\alpha,\beta])$ and $\gamma_2([\alpha,\beta])$, respectively. However, in our case either $q_1$ or $q_2$ will fail to be defined at points along the boundary of the block. As a result, one should consider separately the possibility of a $\gamma$ that stays along the edge of $G_{k,l}$ for either an initial portion or a final portion of $[\alpha,\beta]$. However it is not possible for such an $\gamma$ to achieve a higher value for the integral in question. The reason is that the contribution of the integral along the edge of $G_{k,l}$ will always be zero (since in these regions either $\gamma_1'$ or $\gamma_2'$ will vanish). And in the remainder of the integral corresponding to such a $\gamma$, the value of $b-a$ and/or the value of $d-c$ will have to be reduced, which will result in a reduction of the maximum value of the integral as given in Lemma \ref{LinearCalc}.

\end{proof}
\begin{lemma}\label{OptimalNegativeBlock}
Suppose $\gamma$ passes through a point in the interior of $G_{k,l}$ for which $W_{k,l}\leq0$. It follows that $\gamma^{-1}(G_{k,l})=[\alpha,\beta]$, where $\alpha<\beta$. If we replace $\gamma|_{[\alpha,\beta]}$ by a continuous piecewise linear $\tilde\gamma$ that agrees with $\gamma$ on $\alpha$ and $\beta$ but is made up of a finite sequence of vertical (upwards) and horizontal (to the right) segments, then the resulting $\gamma$ will still be optimal.

\end{lemma}
\begin{proof}
Because $\gamma([\alpha,\beta])\subset G_{k,l}$, it follows that 
$$\int_\alpha^\beta (q_1*\gamma_1)(u)\cdot(q_2*\gamma_2)(u) du=\int_\alpha^\beta (v_k)\cdot(w_l)\sqrt{\gamma_1'(u)}\sqrt{\gamma_2'(u)}du
\leq 0,$$
since we are assuming that $(v_k)\cdot(w_l)=W_{k,l}\leq 0$. However, note that 
$$\int_\alpha^\beta (q_1*\tilde\gamma_1)(u)\cdot(q_2*\tilde\gamma_2)(u)du=0,$$ 
since for all $u\in[\alpha,\beta]$, either $\gamma_1'(u)=0$ or $\gamma_2'(u)=0$. Since $\gamma$ is assumed to be optimal, it follows that 
$\int_\alpha^\beta (q_1*\gamma_1)(u)\cdot(q_2*\gamma_2)(u) du=0$, and this contribution doesn't change if we replace $\gamma|_{[\alpha,\beta]}$ by $\tilde\gamma$.

\end{proof}
Now, suppose we have an optimal matching $\gamma$, and within that $\gamma$ we have chosen a segment, $\gamma|_{[a,b]}$, of Type I. We have proved that we can replace this segment of $\gamma$ with an equally optimal segment that is linear each time it passes through a block $G_{k,l}$ for which $W_{k,l}>0$, and that is a finite sequence of horizontal and vertical segments each time it passes through a block $G_{k,l}$ for which $W_{k,l}\leq0$. So assume $\gamma|_{[a,b]}$ has these properties. We claim that there is at least one $G_{k,l}$, with $W_{k,l}>0$, that our segment passes through with positive, non-infinite slope. To prove this claim, note that if no such $W_{j,k}$ exists, then $\int_a^b (q_1*\gamma_1)(u)\cdot (q_2*\gamma_2)(u)du=0$. But then, replacing $\gamma|_{[a,b]}$ by a path that uses a sequence of horizontal and vertical segments to get from $\gamma(a)$ to $(s_{j-1},t_{k-1})$, then a diagonal line from $(s_{j-1},t_{k-1})$ to $(s_{j},t_{k})$, and then a sequence of horizontal and vertical segments to get from $(s_{j},t_{k})$ to $\gamma(b)$, would result in a positive integral over this segment, contradicting optimality.

Thus, choose a block $G_{k,l}$, with $W_{k,l}>0$, that our segment passes through with positive, non-infinite slope. If our entire segment $\gamma|_{[a,b]}$ passes from the lower left vertex of this block to the upper right vertex, then $\gamma|_{[a,b]}$ is a diagonal line joining these vertices, proving it is a P-segment. So, assume $\gamma|_{[a,b]}$ either enters or exits $G_{k,l}$ through a point on an edge that is not a vertex. Just to be specific, assume that $\gamma|_{[a,b]}$ exits $G_{k,l}$ through a point on its right edge, which would be of the form $(s_k,t^*)$, where $t_{l-1}<t^*<t_l$. Our next task to to examine what happens to $\gamma|_{[a,b]}$ as it passes through the next block to the right, $G_{k+1,l}$. First, consider the case in which $W_{k+1,l}>0$. In that case, by Lemma \ref{OptimalPositiveBlock}, we know $\gamma$ is linear as it passes through $G_{k+1,l}$. The following Lemma tells us the relationship between the slopes $H_{k,l}$ and $H_{k+1,l}$ as $\gamma$ passes through these blocks.
\begin{lemma}\label{AdjacentPositiveSlopes}

\begin{enumerate}
\item{} Assume that the adjacent blocks $G_{k,l}$ and $G_{k+1,l}$ both have positive weights, and suppose that an optimal $\gamma$ passes from $G_{k,l}$ to $G_{k+1,l}$ at the point $(s_k,t^*)$, where $t_{l-1}<t^*<t_l$. Furthermore, assume that $\gamma$ has positive and non-infinite slope in at least one of these two adjacent blocks. Then the slope of $\gamma$ in the other block is also positive and non-infinite, and these two slopes are related by 
\begin{equation}
\frac{H_{k+1,l}}{H_{k,l}} = \left(\frac{W_{k+1,l}}{W_{k,l}}\right)^2 
\end{equation}
\item{} Assume that the adjacent blocks $G_{k,l}$ and $G_{k,l+1}$ both have positive weights, and suppose that an optimal $\gamma$ passes from $G_{k,l}$ to $G_{k,l+1}$ at the point $(s^*,t_l)$, where $s_{k-1}<s^*<s_k$. Furthermore, assume that $\gamma$ has positive and non-infinite slope in at least one of these two adjacent blocks. Then the slope of $\gamma$ in the other block is also positive and non-infinite, and these two slopes are related by 
\begin{equation}
\frac{H_{k,l+1}}{H_{k,l}} = \left(\frac{W_{k,l}}{W_{k,l+1}}\right)^2 
\end{equation}

\end{enumerate}
\end{lemma}
\begin{proof}
We assume that the adjacent blocks $G_{k,l}$ and $G_{k+1,l}$ both have positive weights, and that $\gamma$ passes from $G_{k,l}$ to $G_{k+1,l}$ at the point $(s_k,t^*)$, where $t_{l-1}<t^*<t_l$. Furthermore, we assume that $\gamma$ has positive and nonzero slope in $G_{k,l}$. First, we will show that $\gamma$ must pass through an interior point of $G_{k+1,l}$. If it doesn't, then it would have to follow a vertical path in the left edge of $G_{k+1,l}$, which is the same as the right edge of $G_{k,l}$; but this would violate the fact that is it linear while in $G_{k,l}$. So choose $\alpha$ and $\beta$ such that $\gamma(\alpha)=(\sigma_1,\tau_1)$ is an interior point of $G_{k,l}$ and $\gamma(\beta)=(\sigma_2,\tau_2)$ is an interior point of $G_{k+1,l}$. It follows that 
$$\int_\alpha^\beta (q_1*\gamma_1)(u)\cdot (q_2*\gamma_2)(u)du=W_{k,l}\sqrt{(s_k-\sigma_1)(t^*-\tau_1)}+W_{k+1,l}\sqrt{(\sigma_2-s_k)(\tau_2-t^*)}$$
by Lemma \ref{LinearCalc}. If we view the above formula as a function of a single variable $t^*$, it is an easy Calc I problem to show that the value of the integral is maximized when we choose $t^*$ so that 
$$\frac{(\tau_2-t^*)}{(\sigma_2-s_k)}=\left(\frac{W_{k+1,l}}{W_{k,l}}\right)^2\frac{(t^*-\tau_1)}{(s_k-\sigma_1)}.$$ 
Since we are assuming that $\gamma$ is optimal, it follows that this slope relationship must hold. The other cases of the Lemma follow by analogous arguments.
\end{proof}
Given a Type I matching, we have shown that it must pass through an interior point of a block $G_{k,l}$, of positive weight, with a slope that is neither zero nor infinity. As we follow this segment in either direction, Lemma \ref{AdjacentPositiveSlopes} tells us how the slope of $\gamma$ must change, as long as it enters new blocks of positive weight through non-vertex edge points. (Of course, if it meets a vertex, that terminates our Type I segment.) We now address the question of what happens when a matching passes from a block of positive weight (which it traverses a slope that is neither zero nor infinity) to a block with non-positive weight.
\begin{lemma}\label{PositiveToNegativeSlope}
\begin{enumerate}

\item{} Suppose an optimal matching $\gamma$ passes from a block $G_{k,l}$ with to a block $G_{k+1,l}$ at a point $(s_k,t^*)$, where $t_{l-1}<t^*<t_l$. Assume that one of these blocks has positive weight, and the other has non-positive weight. Also, assume that the slope of $\gamma$ in the block with positive weight is non-zero and non-infinite. Then the slope of $\gamma$ in the block with non-positive weight is zero; hence, $\gamma$ traverses the block with non-positive weight along the horizontal line segment $t=t^*$.

\item{} Suppose an optimal matching $\gamma$ passes from a block $G_{k,l}$  to a block $G_{k,l+1}$  at a point $(s^*,t_l)$, where $s_{k-1}<s^*<s_k$. Assume that one of these blocks has positive weight, and the other has non-positive weight. Also, assume that the slope of $\gamma$ in the block with positive weight is non-zero and non-infinite. Then the slope of $\gamma$ in the block with non-positive weight is infinite; hence, $\gamma$ traverses the block with non-positive weight along the vertical line segment $s=s^*$.

\end{enumerate}
\end{lemma}
\begin{proof} Suppose we are in the first case. Also, to be definite, assume that $W_{k,l}>0$ while $W_{k+1,l}\leq 0$ and that the slope of $\gamma$ in $G_{k,l}$ is non-zero and non-infinite. We proceed by contradiction; suppose that $\gamma$ exits $G_{k+1,l}$ at a point other than $(s_k,t^*)$. In that case, the exit point must be of the form $(\tilde s,\tilde t)$, where $\tilde s>s_k$ and $\tilde t>t^*$.  

By Lemma \ref{OptimalNegativeBlock}, we know that the portion of $\gamma$ passing through $G_{k+1,l}$ will contribute 0 to $\int_a^b (q_1*\gamma_1)(u)\cdot (q_2*\gamma_2)(u)du$. Consider what happens if we replace the portion of $\gamma$ passing through these two blocks by a segment that enters $G_{k,l}$ at the same entry point as $\gamma$, passes linearly through $G_{k,l}$ to the point $(s_k,\tilde t)$,  and then proceeds through $G_{k+1,l}$ by the horizontal segment from $(s_k,\tilde t)$ to $(\tilde s,\tilde t)$. This replacement will increase the integral $\int_a^b (q_1*\gamma_1)(u)\cdot (q_2*\gamma_2)(u)du$, since it will increase the contribution of the portion of $\gamma$ in $G_{k,l}$ (by Lemma \ref{LinearCalc}), while not changing the contribution of the portion in $G_{k+1,l}$, which will still be zero. Thus we contradict the optimality of the original $\gamma$, and the proof of Case (1) of the Lemma is complete. The proof of Case (2) is analogous and we omit it.

\end{proof}

Thus, given a Type I segment, we know it passes through an interior point of a block $G_{k,l}$, of positive weight, with a slope that is neither zero nor infinity. Following this segment in both directions, we know precisely what happens to this segment as if it encounters a block of  positive weight or a block of negative weight. What happens if it encounters several blocks of non-positive weight in a row?
\begin{lemma}
\begin{enumerate}
\item{} Suppose $G_{k,l}$ and $G_{p,l}$ are blocks of positive weight, where $k<p$, and suppose that the intervening blocks $G_{k+1,l}, G_{k+2,l},\dots, G_{p-1,l}$ all have non-positive weights. If $\gamma$ passes through $G_{k,l}$ with non-zero and non-infinite slope, and meets the boundary of $G_{k,l}$ at the point $(s_k,t^*)$, where $t_{l-1}<t^*<t_l$, then $\gamma$ proceeds through all the intervening blocks $G_{k+1,l}, G_{k+2,l},\dots, G_{p-1,l}$ with slope 0 (along the horizontal line $t=t^*$), and then passes through the block $G_{p,l}$ with slope related to the slope in $G_{k,l}$ by the formula
\begin{equation}
\frac{H_{p,l}}{H_{k,l}} = \left(\frac{W_{p,l}}{W_{k,l}}\right)^2 
\end{equation}
If instead of assuming $\gamma$ passes through $G_{k,l}$ with positive, non-infinite slope, we assume that it passes through $G_{p,l}$ with positive, non-infinite slope, then we can again conclude that it passes through the intervening blocks with slope 0 and passes through $G_{k,l}$ with positive, non-infinite slope, and that these slopes are related by the same equation.
\item{}
Suppose $G_{k,l}$ and $G_{k,p}$ are blocks of positive weight, where $l<p$, and suppose that the intervening blocks $G_{k,l+1}, G_{k,l+2},\dots, G_{k,p-1}$ all have non-positive weights. If $\gamma$ passes through $G_{k,l}$ with non-zero and non-infinite slope, and meets the boundary of $G_{k,l}$ at the point $(s^*,t_l)$, where $s_{k-1}<s^*<s_k$, then $\gamma$ proceeds through all the intervening blocks $G_{k,l+1}, G_{k,l+2},\dots, G_{k,p-1}$ with slope $\infty$ (along the vertical line $s=s^*$), and then passes through the block $G_{k,p}$ with slope related to the slope in $G_{k,l}$ by the formula
\begin{equation}
\frac{H_{k,p}}{H_{k,l}} = \left(\frac{W_{k,l}}{W_{k,p}}\right)^2 
\end{equation}
If instead of assuming $\gamma$ passes through $G_{k,l}$ with positive, non-infinite slope, we assume that it passes through $G_{k,p}$ with positive, non-infinite slope, then we can again conclude that it passes through the intervening blocks with slope $\infty$ and passes through $G_{k,l}$ with positive, non-infinite slope, and that these slopes are related by the same equation.

\end{enumerate}

\end{lemma}

Note that this Lemma contains Lemma \ref{AdjacentPositiveSlopes} as the special case in which the number of intervening blocks (with non-positive slopes) is zero.
\begin{proof}
For definiteness, assume we are in Case (1) of the lemma. The proof that $\gamma$ continues with slope 0 through all the intervening blocks with non-positive weights is the same as the proof of Lemma \ref{PositiveToNegativeSlope}; if not, we could replace $\gamma$ by a matching would violate the optimality of the $\gamma$. Now that we know that $\gamma$ has zero slope through the intervening blocks, the proof of the relationship between the slopes in $G_{k,l}$ and $G_{p,l}$ is identical to the proof of the relationship in Lemma \ref{AdjacentPositiveSlopes}, the only modification being that we let $t^*$ represent that height of the horizontal line instead of just the height of the transition point. Case (2) is completely analogous and we omit its proof.
\end{proof}

If we are given a Type I segment, we have shown it passes through an interior point of a block $G_{k,l}$, of positive weight, with a slope that is neither zero nor infinity. Following the segment from this block in each direction, we have now proved that until it encounters a vertex, it must follow the definition of a P-segment. Of course when it encounters a vertex in either direction, that will be the end of the Type I segment. Thus, we have proved that each Type I segment is a P-segment.

We now turn to the proof that each Type II segment can be replaced by an N-segment without altering its optimality. We start with an optimal matching $\gamma$. Assume that our Type II segment is $\gamma_{[a,b]}$. Recall from the definition of a Type II segment, that it starts at a vertex, ends at a vertex, and never passes through an interior point of a block with positive weight. Also, we know that we cannot have two consecutive Type II segments, so if it is preceded by a segment, that segment is now known to be a P-segment, and if it is followed by a segment, that segment is known to be a P-segment.  Let $\gamma(a)=(s_{p-1},t_{q-1})$ and let $\gamma(b)=(s_{k},t_l)$. Because $\gamma_{[a,b]}$ does not pass through an interior point of any block with positive weight, we know that $\int_a^b (q_1*\gamma_1)(u)\cdot (q_2*\gamma_2)(u)du\leq 0$ (since the integrand is non-positive almost everywhere). However, if we replaced $\gamma_{[a,b]}$ by a horizontal segment from $(s_{i-1},t_{j-1})$ to $(s_{k},t_{j-1})$ followed by a vertical segment from $(s_{p},t_{q-1})$ to  $(s_{k},t_l)$, then $\int_a^b (q_1*\gamma_1)(u)\cdot (q_2*\gamma_2)(u)du= 0$; by the optimality of $\gamma$, it follows that for our Type II segment, $\int_a^b (q_1*\gamma_1)(u)\cdot (q_2*\gamma_2)(u)du=0$, and we may make this replacement without affecting the optimality.

\begin{lemma}\label{MostNegativeBlocks}
If $\gamma$ is optimal and $\gamma_{[a,b]}$ is a Type II segment from the vertex $(s_{p-1},t_{q-1})$ to the vertex $(s_{k},t_l)$ then $W_{i,j}\leq 0$ for all $i, j$ satisfying $p\leq i\leq k$ and $q\leq j\leq l$. 
 \end{lemma}
 \begin{proof}
 Suppose not; choose $(i,j)$ such that $p\leq i\leq k$ and $q\leq j\leq l$ but $W_{i,j}> 0$. Then, if we replace $\gamma|_{[a,b]}$ by a segment that starts at $(s_{p-1},t_{q-1})$, then proceeds by first a horizontal segment and then a vertical segment to  $(s_{i-1},t_{j-1})$, then by a linear segment from $(s_{i-1},t_{j-1})$ to $(s_{i},t_{j})$, and then by first a 
horizontal segment and then a vertical segment to $(s_k,t_l)$, we will increase the value of this integral from 0 to a positive number. This contradicts the optimality of $\gamma$, and proves the Lemma.
\end{proof}
To satisfy the definition of N-segment, we need to prove a few more weights are $\leq 0$.
\begin{lemma}
If $\gamma$ is optimal and $\gamma_{[a,b]}$ is a Type II segment from the vertex $(s_{p-1},t_{q-1})$ to the vertex $(s_{k},t_l)$ then $W_{i,j}\leq 0$ for all $i, j$ satisfying any one of the following conditions:
\begin{itemize}
\item{} $p\leq i\leq k$ and $j=q-1$ 
\item{} $p\leq i\leq k$ and $j=l+1$
\item{} $i=p-1$ and $q\leq j\leq l$
\item{} $i=k+1$ and $q\leq j\leq l$
\end{itemize}
\end{lemma}
Note that in some cases one or more of these conditions may be vacuous; for example, if $q=0$, then there is no block $G_{i,j}$ satisfying $j=q-1$.

\begin{proof}
The proof is the same for all four conditions, so consider the first one. Proceed by contradiction; suppose that $W_{i,j}>0$, where $p\leq i\leq k$ and $j=q-1$. Assume that $\gamma|_{[a,b]}$ takes the form of a horizontal segment from $(s_{p-1},t_{q-1})$ to $(s_{k},t_{q-1})$, and then a vertical segment from $(s_{k},t_{q-1})$ to $(s_{k},t_{l})$. (We know that by Lemma \ref{MostNegativeBlocks}, $\int_a^b (q_1*\gamma_1)(u)\cdot (q_2*\gamma_2)(u)du\leq 0$; since the segment described makes the integral equal to zero, it is an optimal one.) Since $q-1>0$ (so $q>1$) in this case, we know that our current Type II segment has a segment preceding it, and we have proved that this preceding segment is a P-segment. We know that $G_{p-1,q-1}$ is the last block that this preceding P-segment passed through, and we also know that because it was a P-segment, $W_{p-1,q-1}>0$, and $H_{p-1,q-1}$ is positive and finite. Let $\alpha<a$ be the lowest parameter value for which $\gamma(\alpha)\in G_{p-1,q-1}$. Let $(\tilde s, \tilde t)=\gamma(\alpha)$. Since the slope of $\gamma$ in $G_{p-1,q-1}$ is positive, we know that $\tilde t<t_{q-1}$. Now, focus attention on the segment of $\gamma$ from $(\tilde s, \tilde t)$ to $(s_i,t_{q-1})$. This segment consists of a straight line segment (of positive slope) from$(\tilde s, \tilde t)$ to $(s_{p-1},t_{q-1})$, followed by a horizontal line segment from $(s_{p-1},t_{q-1})$ to $(s_i,t_{q-1})$. For any $h$ satisfying $\tilde t\leq h\leq t_{q-1}$, define a segment $\gamma_h$ consisting of a straight line from $(\tilde s, \tilde t)$ to $(s_{p-1},h)$, followed by a horizontal line from $(s_{p-1},h)$ to $(s_{i-1},h)$, followed by a straight line from  $(s_{i-1},h)$ to  $(s_i,t_{q-1})$.

By Lemma \ref{LinearCalc}, the contribution of the segment $\gamma_h$ to the integral in question is 
$$C(h)=W_{p-1,q-1}\sqrt{s_{p-1}-\tilde s}\sqrt{h-\tilde t}+W_{i,q-1}\sqrt{s_i-s_{i-1}}\sqrt{t_{q-1}-h}.$$
Note the contribution of the horizontal segment is zero and, by our assumptions, $W_{p-1,q-1}$ and $W_{i,q-1}$ are both greater than zero. Clearly $C(h)$ is continuous for $\tilde t\leq h\leq t_{q-1}$, and is differentiable except at the endpoints of this $h$-interval. When $h=t_{q-1}$, the segment $\gamma_h$ coincides with the segment of $\gamma$ under consideration. Clearly, as $h\to t_{q-1}$, $C'(h)\to -\infty$, since the derivative of $f(x)=\sqrt{x}$ approaches $\infty$ as $x\to 0$. This implies that for values of $h$ within some some small interval $(t_{q-1}-\epsilon,t_{q-1}]$, $C(h)$ is a decreasing function of $h$, and so for $h\in(t_{q-1}-\epsilon,t_{q-1})$, $C(h)>C(t_{q-1})$. This contradicts the optimality of our original $\gamma$, and completes the proof of the Lemma. 
 
\end{proof}
The lemmas we have proved show that a segment of Type I is always a P-segment and a segment of Type II is always an N-segment, establishing Statements (1) and (2) of Theorem \ref{MainTheorem}. What remains is to prove Statement (3) of Theorem \ref{MainTheorem}, which gives a relationship between the final slope of a P-segment, and the initial slope of the next P-segment (whether or not there is an N-segment between them).

First consider the case in which one P-segment of our optimal matching $\gamma$ ends at the vertex $(s_i,t_j)$, and the next one begins at the same point. Since these are P-segments, we already know that their slopes $H_{i,j}$ in $G_{i,j}$ and $H_{i+1,j+1}$ in $G_{i+1,j+1}$ are both positive. Let $\mu=\sqrt{{H_{i+1,j+1}\over H_{i,j}}}$. We need to prove that $\mu$ satisfies the appropriate inequalities given in Statement (3) of Theorem \ref{MainTheorem}. Note that these inequalities depend on the sign of $C$ and $D$. (Because we are dealing with P-segments, $A$ and $B$ must both be positive, by definition.) This argument proceeds by contradiction; we show that if $\mu$ is outside the prescribed intervals, then $\gamma$ is not optimal.

We begin by assuming that $D>0$. In either of the two cases where this holds, the lower end of the prescribed interval for $\mu$ is $D^2/AB$. So, suppose that $\mu<D^2/AB$. Choose a point on $\gamma$ in the interior of $G_{i,j}$. This point will be of the form $(s_i-p,t_j-q)$, where $p,q>0$. Likewise, choose a point on $\gamma$ in the interior of $G_{i+1,j+1}$. This point will be of the form $(s_i+u,t_j+v)$, where $u,v>0$. Now, for arbitrary $x\in[0,u]$ and $y\in[0,q]$, consider a path $\gamma^{x,y}=(\gamma^{x,y}_1,\gamma^{x,y}_2)$, composed of the following three pieces: first, the line segment from $(s_i-p,t_j-q)$ to $(s_i,t_j-y)$; second, the line segment from $(s_i,t_j-y)$ to $(s_i+x,t_j)$; third, the line segment from $(s_i+x,t_j)$ to $(s_i+u,t_j+v)$. Assume that the portion of $\gamma$ from $(s_i-p,t_j-q)$ to $(s_i+u,t_j+v)$ corresponds to the parameter interval $z\in[\alpha,\beta]$. Parameterize $\gamma^{x,y}$ using this same parameter interval, and assume that it is linear on each of the three segments. Define
$$E(x,y)=\int_\alpha^\beta (q_1*\gamma^{x,y}_1)(z)\cdot (q_2*\gamma^{x,y}_2)(z)dz.$$
By applying Lemma \ref{LinearCalc} to the three linear pieces of $\gamma^{x,y}$, we obtain 
$$E(x,y)=A\sqrt{p}\sqrt{q-y}+D\sqrt{x}\sqrt{y}+B\sqrt{v}\sqrt{u-x}$$
where we are in the case of $A,B,D>0$.
It is an easy exercise in two-variable calculus that the function $E(x,y)$ has a unique absolute maximum on the domain $(x,y)\in[0,\infty)\times[0,\infty)$, and that this maximum occurs at the point
$$x_0=u\left(\frac{D^4q-B^2A^2\left(\frac{pv}{u}\right)}{D^4q+D^2B^2v}\right)$$
$$y_0=q\left(\frac{D^4u-B^2A^2\left(\frac{pv}{q}\right)}{D^4u+D^2A^2p}\right)$$
We now observe that this maximum $(x_0,y_0)$ lies in $(0,u)\times(0,q)$, as follows. First, note that every individual variable occurring in the expressions for $x_0$ and $y_0$ has a positive value. Furthermore, recall that 
$\mu=\sqrt{{H_{i+1,j+1}\over H_{i,j}}}=\sqrt{v/u\over q/p}$. Since we are assuming that $\mu<D^2/AB$, it follows immediately that the numerators in the formulae for both $x_0$ and $y_0$ are positive and therefore $x_0,y_0>0$. Since the numerator in the fraction for $x_0$ is less than $D^4q$, while the denominator is greater than $D^4q$, it follows that $x_0<u$ and, similarly, that $y_0<q$. Hence we have shown that $(x_0,y_0)$ lies in $(0,u)\times(0,q)$. Since $E$ has an absolute maximum at $(x_0,y_0)$, it follows that $E(x_0,y_0)>E(0,0)$. But this contradicts the optimality of $\gamma$, since $\gamma^{0,0}$ corresponds exactly to our original $\gamma$!

Similarly, under the assumption that $C>0$, we show that $\mu>AB/C^2$ leads to a contradiction. This proves Statement (3) for two adjacent P-segments.

The case of two P-segments separated by a single N-segment is similar. Suppose one P-segment ends at a vertex $(s_i,t_j)$ and the next one starts at $(s_k,t_l)$, and there is an N-segment from $(s_i,t_j)$ to $(s_k,t_l)$. Once, again, we will assume we are in the case $D>0$, and suppose that $\mu<D^2/AB$. Choose a point on $\gamma$ in the interior of $G_{i,j}$. This point will be of the form $(s_i-p,t_j-q)$, where $p,q>0$. Likewise, choose a point on $\gamma$ in the interior of $G_{k+1,l+1}$. This point will be of the form $(s_k+u,t_l+v)$, where $u,v>0$. Note that the portion of $\gamma$ from $(s_i-p,t_j-q)$ to $(s_k+u,t_l+v)$ consists of four line segments: first from $(s_i-p,t_j-q)$ to $(s_i,t_j)$, second from  $(s_i,t_j)$ to $(s_k,t_j)$, third from $(s_k,t_j)$ to $(s_k,t_l)$, and fourth from $(s_k,t_l)$ to $(s_k+u,t_l+v)$. Now, for arbitrary $x\in[0,u]$ and $y\in[0,q]$, consider a path $\gamma^{x,y}=(\gamma^{x,y}_1,\gamma^{x,y}_2)$, composed of the following five line segments: first from $(s_i-p,t_j-q)$ to $(s_i,t_j-y)$, second from $(s_i,t_j-y)$ to $(s_k,t_j-y)$, third from $(s_k,t_j-y)$ to $(s_k+x,t_j)$, fourth from $(s_k+x,t_j)$ to $(s_k+x,t_l)$, and fifth from $(s_k+x,t_l)$ to $(s_k+u,t_l+v)$. The rest of the argument proceeds just as before; the contribution of the integral over the segment $\gamma^{x,y}$ is again given by the formula
$$E(x,y)=A\sqrt{p}\sqrt{q-y}+D\sqrt{x}\sqrt{y}+B\sqrt{v}\sqrt{u-x}$$
since the horizontal and vertical segments have no contributions. By finding that the maximum value of $E(x,y)$ does not occur at $(x,y)=(0,0)$, we contradict the assumption that $\gamma$ was optimal.

This completes the proof of Theorem \ref{MainTheorem}.

\end{proof}
\section{Algorithm for Producing a Precise Optimal Matching of PL Curves}

In Theorem \ref{MainTheorem}, we proved that given PL curves $f_1$ and $f_2$, there exists an optimal matching $\gamma=(\gamma_1,\gamma_2)$ that is a union of P-segments and N-segments. We now outline our algorithm for producing such an optimal matching. Throughout this section, we continue using the notation developed in the previous section for our curves $f_1$ and $f_2$ and their SRVF's $q_1$ and $q_2$. We assume that the $q_i's$ are step functions that do not take the value zero on any of their subintervals.

The algorithm examines each vertex $(s_i,t_j)$, one row at at time, in the order 
$$(s_0,t_0), (s_1,t_0), (s_2,t_0),\dots,(s_0,t_1),(s_1,t_1), (s_2,t_1),\dots, (s_{m-1},t_n),(s_m,t_n)$$
When it arrives at a vertex $(s_i,t_j)$, it checks whether an optimal segment has been found from $(s_0,t_0)$ to $(s_i,t_j)$. If no such optimal segment has been found, it skips to the next vertex.

However, if such an optimal segment has been found, it implements a ``searchlight" procedure, looking for segments starting from $(s_i,t_j)$, as follows:
\begin{itemize}
\item If $W_{i+1,j+1}\leq 0$, the algorithm finds all possible N-segments beginning at $(s_i,t_j)$. Suppose such an N-segment ends at $(s_k,t_l)$. The algorithm checks whether the value of the optimal segment from $(s_0,t_0)$ to $(s_i,t_j)$ is higher than the value of the best segment found so far from $(s_0,t_0)$ to $(s_k,t_l)$. If it is, then the union of these two segments yields a new best possible segment from $(s_0,t_0)$ to $(s_k,t_l)$, and this segment is recorded as such. If it is not, then this N-segment is simply ignored.
\item If $W_{i+1,j+1}>0$, then the algorithm examines P-segments beginning at $(s_i,t_j)$. It does not have to examine all such P-segments, because of the slope restriction imposed by the last clause of Theorem \ref{MainTheorem}.  To be more precise, by considering the final slope of the last P-segment occurring in the optimal path from $(s_0,t_0)$ to $(s_i,t_j)$ and the value of four relevant weights, the last clause of Theorem \ref{MainTheorem} specifies an allowable range of slopes for the next P-segment. Our searchlight procedure examines all P-segments beginning at $(s_i,t_j)$ whose initial slopes are within this range. (We will soon give some more details on how we accomplish the enumeration of these P-segments.) Suppose such a P-segment ends at $(s_k,t_l)$. The algorithm checks whether the sum of the values of this new P-segment and the optimal segment from $(s_0,t_0)$ to $(s_i,t_j)$ is greater than the value of the best segment found so far from $(s_0,t_0)$ to $(s_k,t_l)$. If it is greater, the union of these two segments yields a new candidate for best possible segment from $(s_0,t_0)$ to $(s_k,t_l)$, and this segment is recorded as such. If it is not, then this new P-segment is ignored.

\end{itemize}
During the application of this algorithm, by the time we are examining a vertex $(s_i,t_j)$, we have already determined whether or not there exists a segment from $(s_0,t_0)$ to $(s_i,t_j)$ that follows the rules of Theorem \ref{MainTheorem}. 

Thus when we arrive at the final vertex $(s_m,t_n)$, we will have determined the best possible segment from $(s_0,t_0)$ to $(s_m,t_n)$.

We now make some further comments on the searchlight procedure alluded to above. In the first case, we are searching for all possible N-segments starting at $(s_i,t_j)$. This can be accomplished by a relatively simple combinatorial procedure, searching for vertices above and to the right of $(s_i,t_j)$ which will be the endpoint of an allowable N-segment. 

However, the searchlight procedure has more subtlety in the second case, where we are searching for all possible P-segments, with starting slope within a given interval, say $[h_1,h_2]$. Because of this subtlety, we give some more details about how this is accomplished. In order to make sure we don't miss any allowable P-segments due to round-off error, we begin by choosing an initial slope $h_1-\epsilon$, where $\epsilon$ denotes some convenient small positive number. Then, we construct a segment beginning at the vertex $(s_i,t_j)$ and following the slope-change rules from the definition of P-segment whenever we cross from one block to the next. There is essentially a zero probability that this segment will meet a vertex, so the segment ends when it arrives at either the vertical line $s=1$ or the horizontal line $t=1$. Technically, this segment is not a P-segment, because its final point is not a vertex. The idea of the searchlight algorithm is that we want to find the next initial slope above ($h_1-\epsilon$) that will result in a P-segment that actually terminates at a vertex. There is a nice trick for accomplishing this. Note that the slope of this segment changes each time it passes from one block to the next, because of the change in the weights as we pass from one block to the next. However, it is very easy to perform a PL reparameterization of the original curves, that will result in all the blocks that this path passes through having the same weight! For example, consider the case in  which the first edge-crossing of our segment takes it from $G_{i+1,j+1}$ to $G_{i+2,j+1}$. By choosing a linear reparameterization $\gamma:[s_{i+1},\tilde s_{i+2}]\to [s_{i+1},s_{i+2}]$, we can change the value of the $q$-function of $f_1$ on this portion of the curve to any multiple of its original value $u_{i+2}$ that we desire. Therefore, we can change the weight $W_{i+2,j+1}$ to make it equal to the weight $W_{i+1,j+1}$ by such a reparameterization. (Of course, we must translate the values of $s_{k}$ for all $k>i+2$ in order to accommodate the new value of $\tilde s_{i+2}$.). Since the weights of these two blocks are now equal, it follows from the slope transition formula that the slope of the segment will now remain the same as our segment basses from  $G_{i+1,j+1}$ to $G_{i+2,j+1}$. We proceed along our segment, making a similar reparametrization of either $f_1$ or $f_2$ each time the segment passes from one block to the next. The result of this procedure will be that our entire segment has the same slope (equal to its initial slope in $G_{i+1,j+1}$). Note that the total parameter intervals will no longer be the unit  intervals that they were to start with, but that doesn't matter. Also, note that the coordinates of several of the vertices will have been changed by these reparameterizations.

We need to find the lowest slope above $h_1-\epsilon$ for which the segment encounters a vertex. But, because the slopes are all the same along the segment, this becomes easy. Let $S$ denote the set of vertices that are either the upper end of a vertical edge crossed by our segment, or the left end of a horizontal edge crossed by our segment. For each of the vertices $(s_k,t_l)$ in $S$, compute the ratio $t_l/s_k$; the lowest value of this ratio will obviously be the lowest initial slope above $h_1-\epsilon$ for which our segment encounters a vertex. Call this new slope $\tilde h_1$. Going back to our original parameterizations, we have our first P-segment, starting at $(s_i,t_j)$, with initial slope $\tilde h_1$. 

To find the next P-segment, we begin by constructing a segment starting at $(s_i,t_j)$, with slope $\tilde h_1+\epsilon$ for a very small $\epsilon$, that follows the slope-change rules whenever it passes from one block to another. There is a zero probability that this segment will encounter a vertex, so it will end when it arrives at either the vertical line $s=1$ or the horizontal line $t=1$. To find the next slope above $\tilde h_1+\epsilon$ that will yield a P-segment, we use exactly the same path straightening procedure that we just described. We proceed in this manner until we arrive at a slope above $h_2$. This gives us all the P-segments starting at $(s_i,t_j)$ with initial slopes in the required range. Note that for each P-segment we construct, we just need to construct one ``test" segment to find it.

\section{Examples}

In the following pages, we present the results produced by implementing the aforementioned algorithm on different pairs of $1D$, $2D$ and $3D$-curves. In case of $1D$-curves, $A$ shows the original curves as graphs, $B$ shows the aligned curves and $C$ shows the optimal matching on $I \times I$ grid. In case of $2D$ and $3D$-curves, the alignment of the curves is shown in figure $A$, the geodesic is shown in $B$ and $C$ represents the optimal matching on the $I \times I$ grid for the pair of curves. The following table shows the list of the pairs of curves.

\begin{table}[h]
\begin{center}
\begin{tabular}{ c | c }
\hline \\
EX & DESCRIPTION OF THE PAIRS OF CURVES  \\ [0.8ex]
\hline 
\hline \\
$1(1D)$ & $f_{1}(t) = f(t)$, $f_{2}(t) = g(t)$, $t \in \left\{\frac{n}{5} \right\}_{n=0}^{5}$, taken from a random data set\\[1.5ex]
$2(1D)$ & $f_{1}(t) = f(t)$, $f_{2}(t) = g(t)$, $t \in \left\{\frac{n}{100} \right\}_{n=0}^{100}$, taken from a simulated data set\\[1.5ex]
$3(2D)$ & $f_{1}(t) = (t,f(t))$, $f_{2}(t) = (t,g(t))$, $t \in \left\{\frac{n}{45} \right\}_{n=0}^{45}$, taken from the female growth data set \cite{TuddenhamEtAl} \\[1.5ex]
$4(2D)$ & $f_{1}(t) = (2 \pi t , \ 2 \pi t)$ and $f_{2}(t) = (2 \pi t , \ \sin(6 \pi t))$, $t \in \left\{\frac{n}{45} \right\}_{n=0}^{45}$\\[1.5ex]
$5(2D)$ & $f_{1}(t) = (1+\cos(2 \pi (1-t)) , \sin (2 \pi (1-t)))$, $f_{2}(t) = (-1 + \cos(-2 \pi t) , \sin(2 \pi t))$ , $t \in \left\{\frac{n}{45} \right\}_{n=0}^{45}$\\[1.5ex]
$6(2D)$ & $f_{1}(t) = (1+\cos(2 \pi (1-t)) , \sin (2 \pi (1-t)))$, $f_{2}(t) = (-1 + \cos(-2 \pi t) , \sin(2 \pi t))$ , $t \in \left\{\frac{n}{3} \right\}_{n=0}^{3}$\\[1.5ex]
$7(2D)$ & $f_{1}(t) = (2 \pi t , \ \sin(6 \pi t))$, $f_{2}(t) = (2 \pi t , \ \sin(4 \pi t))$ , $t \in \left\{\frac{n}{45} \right\}_{n=0}^{45}$\\[1.5ex]
$8(3D)$ & $f_{1}(t) = (\cos 4\pi t , \ \sin 4\pi t , \ t)$, $f_{2}(t) = (\cos 8\pi t , \ \sin 8\pi t , \ t)$ , $t \in \left\{\frac{n}{50} \right\}_{n=0}^{50}$\\[1.5ex]
$9(3D)$ & $f_{1}(t) = (4\pi t\cos (4\pi t) , 4\pi t\sin (4\pi t) , (4\pi t)^{2})$, $f_{2}(t) = (4\pi t\cos (4\pi t) , -4\pi t\sin(4\pi t) , (4\pi t)^{2})$ , $t \in \left\{\frac{n}{50} \right\}_{n=0}^{50}$\\[1.5ex]
\hline
\end{tabular}
\end{center}
\end{table}

\begin{figure}[h]
\begin{center}
\includegraphics[scale=0.5]{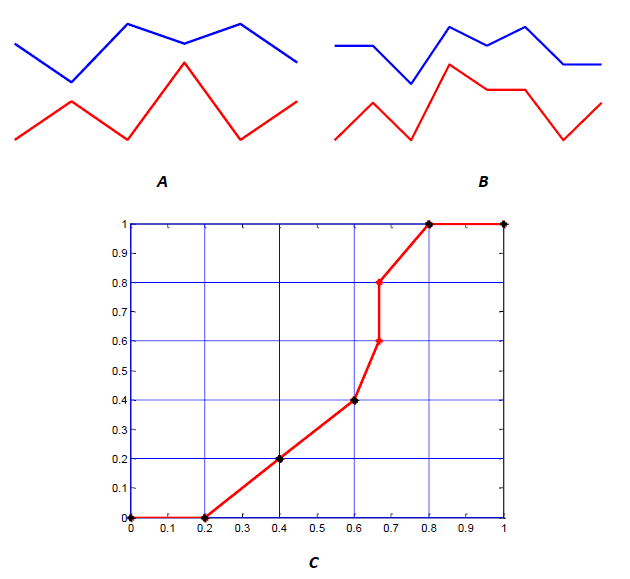}
\caption{Example $1(1D)$. Distance before alignment is $1.4815$. Distance after alignment is $0.5071$.}
\label{ex1}
\end{center}
\end{figure}

\begin{figure}[h]
\begin{center}
\includegraphics[scale=0.55]{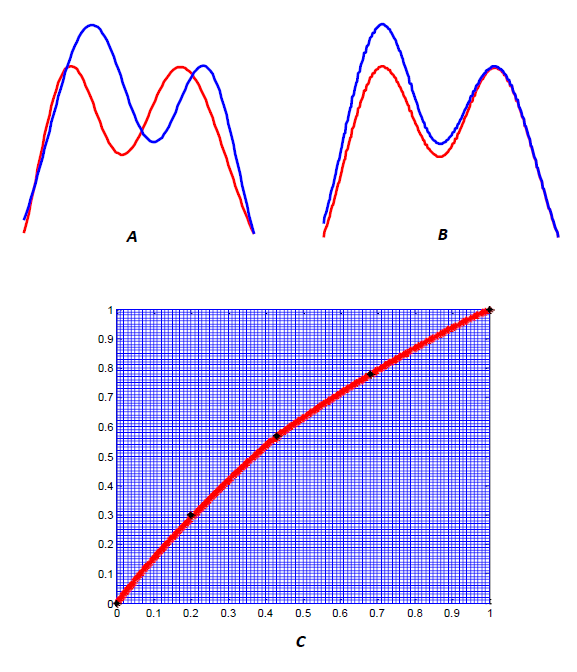}
\caption{Example $2(1D)$. Distance before alignment is $1.4312$. Distance after alignment is $0.1195$.}
\label{ex2}
\end{center}
\end{figure}
  
\begin{figure}[h]
\begin{center}
\includegraphics[scale=0.45]{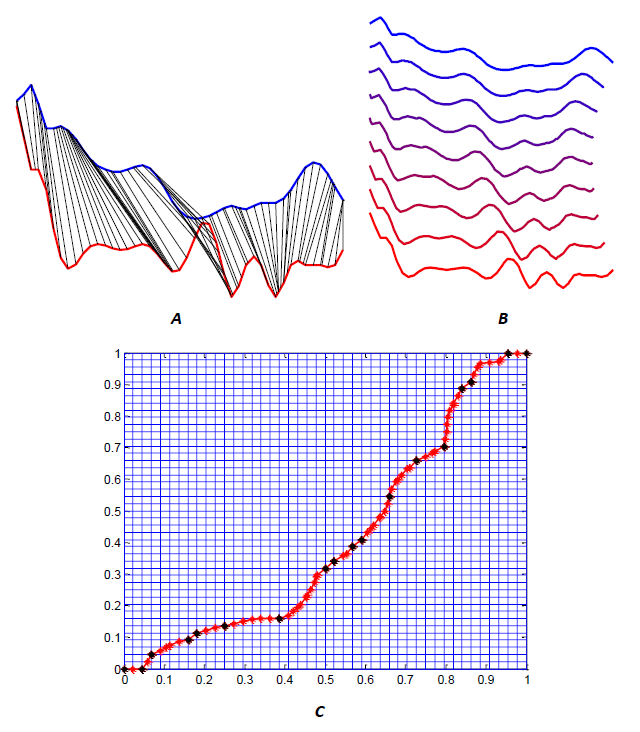}
\caption{Example $3(2D)$. Distance before alignment is $7.0108$. Distance after alignment is $4.0721$.}
\label{ex3}
\end{center}
\end{figure}

\begin{figure}[h]
\begin{center}
\includegraphics[scale=0.45]{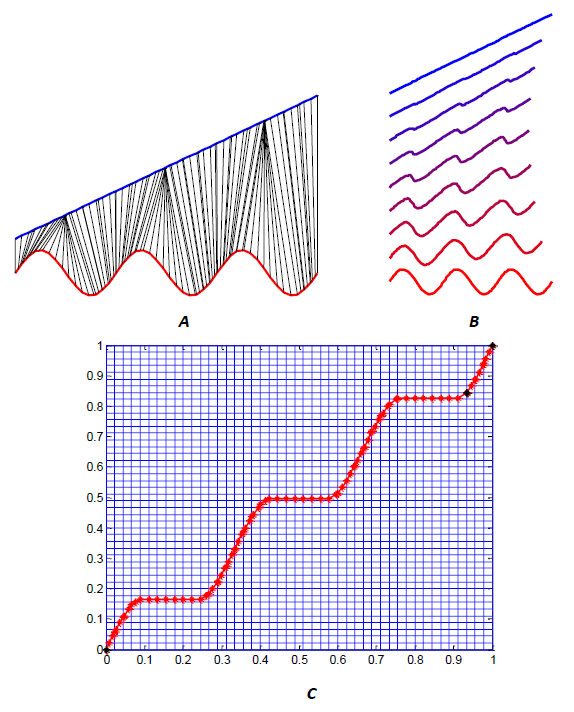}
\caption{Example $4(2D)$. Distance before alignment is $3.9107$. Distance after alignment is $2.8418$.}
\label{ex4}
\end{center}
\end{figure}

\begin{figure}[h]
\begin{center}
\includegraphics[scale=0.45]{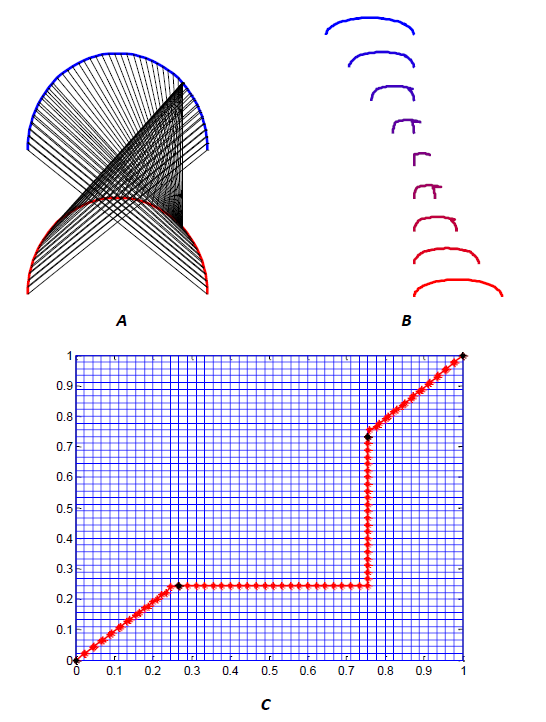}
\caption{Example $5(2D)$. Distance before alignment is $2.5064$. Distance after alignment is $2.0683$.}
\label{ex5}
\end{center}
\end{figure}

\begin{figure}[h]
\begin{center}
\includegraphics[scale=0.45]{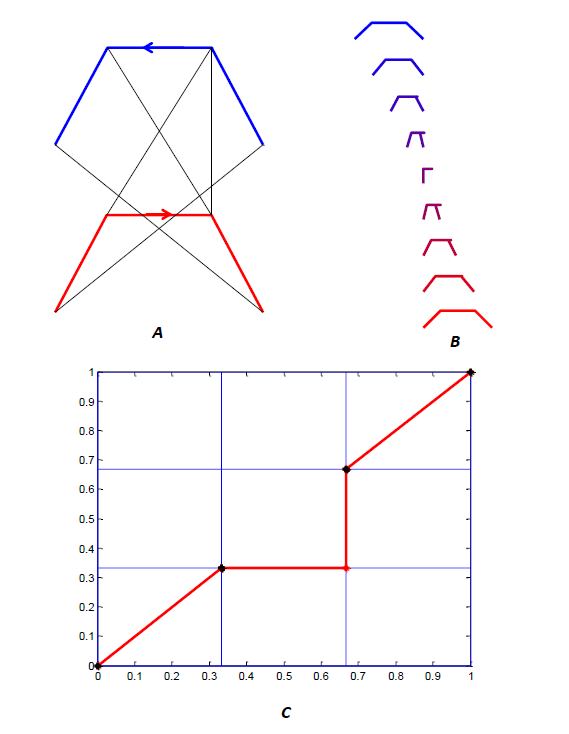}
\caption{Example $6(2D)$. Distance before alignment is $2.4495$. Distance after alignment is $2$.}
\label{ex6}
\end{center}
\end{figure}

\begin{figure}[h]
\begin{center}
\includegraphics[scale=0.45]{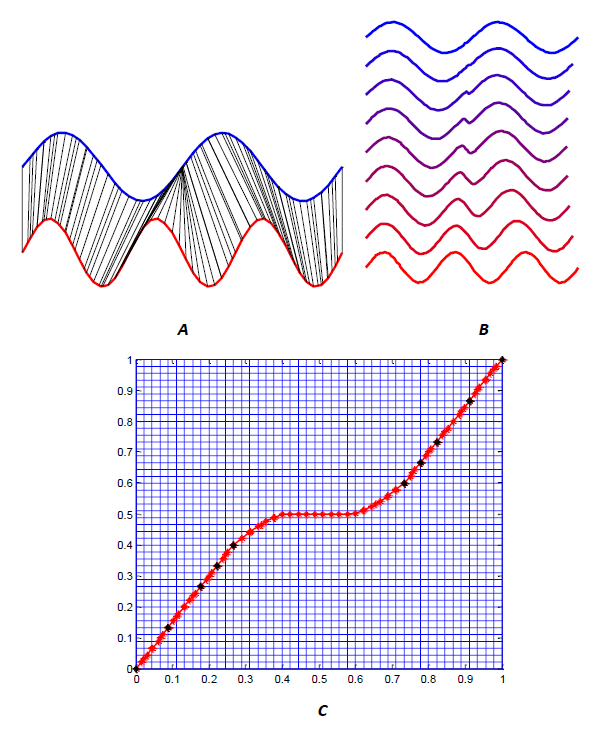}
\caption{Example $7(2D)$. Distance before alignment is $4.1655$. Distance after alignment is $1.7899$.}
\label{ex7}
\end{center}
\end{figure}

\begin{figure}[h]
\begin{center}
\includegraphics[scale=0.45]{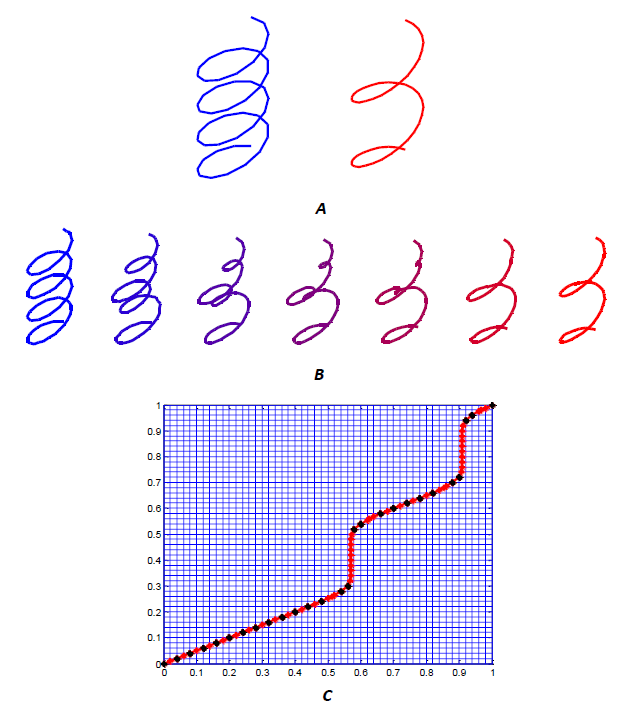}
\caption{Example $8(3D)$. Distance before alignment is $6.1114$. Distance after alignment is $3.2117$.}
\label{ex8}
\end{center}
\end{figure}

\begin{figure}[h]
\begin{center}
\includegraphics[scale=0.45]{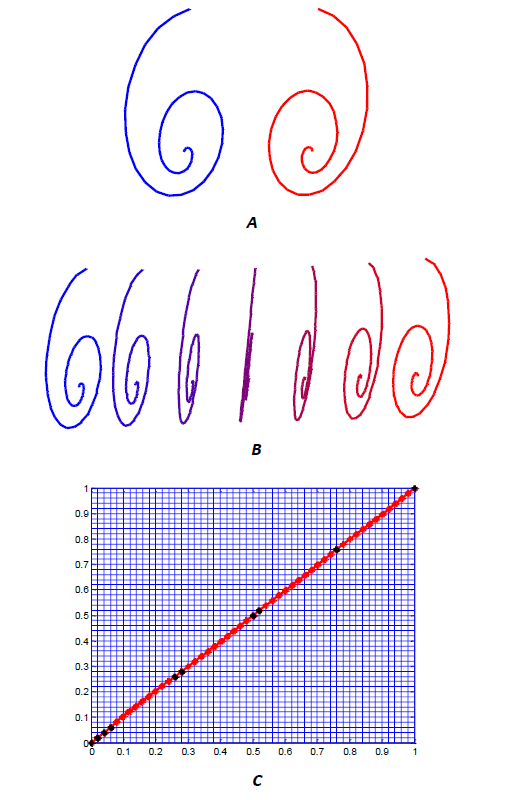}
\caption{Example $9(3D)$. Distance before alignment is $8.5302$. Distance after alignment is $8.5253$.}
\label{ex9}
\end{center}
\end{figure}

\begin{figure}[h]
\begin{center}
\includegraphics[scale=0.45]{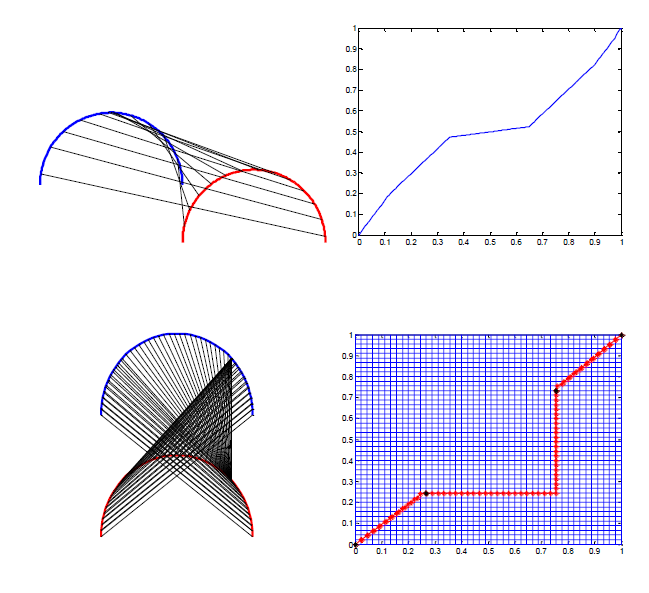}
\caption{Comparing results from the Dynamic programming (top row) with the algorithm (bottom row). Distance before alignment is $1.57$. Distance after alignment : $1.5239$ (using DP) ; $1.2457$ (using our algorithm).}
\label{ex10}
\end{center}
\end{figure}

\clearpage
\bibliographystyle{plain}
\bibliography{fa_match}

\end{document}